\newcommand{\Z}{\mathbb{Z}}
\newcommand{\Q}{\mathbb{Q}}
\newcommand{\C}{\mathbb{C}}
\renewcommand{\O}{\mathcal{O}}
\newcommand{\m}{\mathfrak{m}}
\newcommand{\bmt}{\begin{pmatrix}}
\newcommand{\emt}{\end{pmatrix}}
\newcommand{\bsm}{\left(\begin{smallmatrix}}
\newcommand{\esm}{\end{smallmatrix}\right)}
\renewcommand{\H}{\textnormal{H}}
\newcommand{\til}{\widetilde}
\DeclareMathOperator{\End}{End}
\DeclareMathOperator{\Gal}{Gal}
\DeclareMathOperator{\Hom}{Hom}
\DeclareMathOperator{\rk}{rk}
\DeclareMathOperator{\im}{im}
\DeclareMathOperator{\Spec}{Spec}
\DeclareMathOperator{\coker}{coker}
\DeclareMathOperator{\Sch}{\mathbf{Sch}}
\DeclareMathOperator{\Ab}{\mathbf{Ab}}
\DeclareMathOperator{\Pic}{Pic}
\newcommand{\ra}{\rightarrow}
\theoremstyle{definition}
\newtheorem{definition}{Definition}[section]
\newtheorem{remark}[definition]{Remark}
\newtheorem{example}[definition]{Example}
\newtheorem{construction}[definition]{Construction}
\theoremstyle{plain}% default
\newtheorem{proposition}[definition]{Proposition}
\newtheorem{lemma}[definition]{Lemma}
\newtheorem{theorem}[definition]{Theorem}
\newtheorem{corollary}[definition]{Corollary}
\theoremstyle{remark}
\renewcommand{\phi}{\varphi}
\author{Giulio Orecchia \\ email: \href{mailto:g.orecchia@math.leidenuniv.nl}{g.orecchia@math.leidenuniv.nl}}
\newcounter{nootje}
\newcommand{\beq}{\begin{equation}}
\newcommand{\eeq}{\end{equation}}
\newcommand{\beqs}{\begin{equation*}}
\newcommand{\eeqs}{\end{equation*}}
\numberwithin{equation}{section}
\title{A criterion for existence of N\'eron models of jacobians}
\begin{document}
\maketitle
 \begin{abstract}
N\'eron models of abelian varieties do not necessarily exist if the base $S$ has dimension higher than 1. We introduce a new condition, called toric additivity, on a family of smooth curves having nodal reduction over a normal crossing divisor $D\subset S$. The condition is necessary and sufficient for existence of a N\'eron model of the jacobian of the family; it depends only on the Betti numbers of the dual graphs of the fibres of the family, or on the toric ranks of the fibres of the jacobian.
 \end{abstract}

\section*{Introduction}
\subsection*{Toric additivity}
Consider an abelian variety $A$ over a number field $K$ with ring of integers $S$. In general, there may not exist an abelian scheme $\mathcal A/S$ extending $A$. However, it was proved by A. N\'eron and M. Raynaud that $A$ admits a canonical model $\mathcal N/S$, satisfying a number of good properties. Among these: it is a smooth, separated group scheme, and every $K$-valued point of $A$ extends uniquely to a section $S\ra \mathcal N$. Such a model is called a N\'eron model for $A$ (see \cref{defn_NM})

It is natural to ask whether an abelian variety always admits a N\'eron model if the base $S$ is, instead of a Dedekind scheme, a regular scheme of dimension higher than $1$.

In this paper, we focus on the case of jacobians of curves. We work over a regular base $S$, and consider a nodal curve $\mathcal C/S$, smooth over the complement of a normal crossing divisor $D\subset S$. We introduce a new condition on $\mathcal C/S$, smooth-local on $S$, called \textit{toric additivity} (\cref{def_ta_local}). Roughly, the family $\mathcal C/S$ is toric-additive at a geometric point $s$ of $S$ if the toric rank of the jacobian of the fibre over $s$ is equal to the sum of the toric ranks of the jacobian over the generic points of the components of the boundary divisor passing through $s$. More precisely, let $V$ be the spectrum of the \'etale local ring at $s$, $D_1,\ldots,D_n$ the irreducible components of $D\cap V$, $\zeta_i$ the generic point of $D_i$; there is an inequality (\ref{ineq_toric})
$$\dim T\leq \dim T_1+\ldots+\dim T_n$$
where $T$ is the maximal torus contained in $\Pic^0_{\mathcal C_s/s}$, and $T_i$ is the maximal torus contained in the fibre $\Pic^0_{\mathcal C_{\zeta_i}/\zeta_i}$. We say that $\mathcal C/S$ is \textit{toric-additive at $s$} if the inequality above is actually an equality.

The toric rank of the jacobian of a nodal curve (over an algebraically closed field) can also be interpreted as the first Betti number of the dual graph of the curve. Therefore, an equivalent definition of toric-additivity at $s$ is 
$$h_1(\Gamma)=h_1(\Gamma_1)+\ldots+h_1(\Gamma_n)$$ where $\Gamma$ is the dual graph of the nodal curve $\mathcal C_s/s$ and $\Gamma_i$ is the dual graph of the geometric fibre $\mathcal C_{\overline{\zeta}_i}$. 

Our main result is the following:
\begin{theorem}\label{mainthm}[\cref{main_thm_curves}]\mbox{}
Suppose $S$ is a regular scheme, $D$ a normal crossing divisor, $\mathcal C/S$ a nodal curve, smooth over $U=S\setminus D$. Write $J$ for the relative jacobian $\Pic^0_{\mathcal C_U/U}$.
\begin{itemize}
\item[a)] If $\mathcal C/S$ is toric-additive, then $J$ admits a N\'eron model over $S$.
\item[b)] If moreover $S$ is an excellent scheme, the converse is also true.
\end{itemize}  
\end{theorem}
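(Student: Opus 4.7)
Both toric-additivity and the existence of a N\'eron model are smooth-local on $S$, so I may reduce to $V = \Spec R$ with $R$ strictly henselian regular local and $D|_V = V(t_1 \cdots t_n)$ for part of a regular system of parameters. Let $\zeta_i$ be the generic point of $V(t_i)$ and $s$ the closed point; write $P := \Pic^{[0]}_{\mathcal{C}/V}$ for the identity-component algebraic space of the relative Picard scheme, which is a smooth group model of $J$ over $V$ extending it.

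\textbf{Direction (a): toric-additive implies existence.}  The plan is to realize the N\'eron model as a quotient $P/K$ by a flat closed subgroup scheme $K \subset P$ parametrizing the gluing of component groups between codimension-one strata. Over each $\zeta_i$, one-dimensional N\'eron model theory (Raynaud) determines $K_{\zeta_i}$ explicitly in terms of the dual graph $\Gamma_i$. Toric-additivity is precisely the condition making these descriptions compatible globally: the torus part of $\Pic^0_{\mathcal{C}_s/s}$ decomposes canonically as $T_1 \oplus \ldots \oplus T_n$, each $T_i$ inherited from $\zeta_i$, so the subgroups $K_{\zeta_i}$ glue to a flat closed subgroup $K \subset P$. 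Smoothness and separatedness of $P/K$ are then automatic; the N\'eron mapping property is verified by extending a given section of $J$ over $U$ first to each $\zeta_i$ via the one-dimensional case along a trait, and then patching at $s$ using the splitting of the torus part.

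\textbf{Direction (b): existence implies toric-additive, assuming $S$ excellent.}  Conversely, suppose $J$ admits a N\'eron model $\mathcal{N}/V$ but toric-additivity fails at $s$. I first reduce to $n = 2$ by pulling back along the local map $\Spec R' \to V$ with $R'$ the strict henselization of $R$ at the generic point of a codimension-two stratum $V(t_i) \cap V(t_j)$; excellence is used here to guarantee that this base change preserves the nodal structure and the relevant dual-graph combinatorics. In the resulting two-dimensional setting, failure of the equality $h_1(\Gamma) = h_1(\Gamma_1) + h_1(\Gamma_2)$ yields a ``mixed'' cycle in $\Gamma$ using edges of both labels. From this cycle I construct a section of $J$ over $V \setminus \{s\}$ whose extensions along the two one-dimensional strata $V(t_1)$ and $V(t_2)$ are forced to lie in distinct components of $\mathcal{N}_s$; concretely, the zero section of $\mathcal{N}$ then fails to be closed at $s$, contradicting separatedness of a N\'eron model.

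\textbf{Main obstacle.}  The hardest step is direction (b): extracting the precise combinatorial equality of Betti numbers from the bare hypothesis of a smooth separated group satisfying the N\'eron mapping property. Both directions rely on a sufficiently explicit local description, presumably developed in the body of the paper, of the component group of $\Pic^{[0]}_{\mathcal{C}/V}$ in terms of $\Gamma$ and its contractions $\Gamma_i$; this description is what bridges the combinatorial data with the algebro-geometric structure of the candidate N\'eron model. The excellence hypothesis enters only in (b), to permit the local-to-global reduction to codimension-two generic points without losing control of the nodal structure.
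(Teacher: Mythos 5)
Your proposal takes a genuinely different route from the paper, and both directions have substantive gaps that would need to be filled before this could count as a proof.

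\textbf{On direction (a).} Your plan is to exhibit the N\'eron model as a quotient $P/K$ with $K$ a flat closed subgroup glued from the one-dimensional data over the $\zeta_i$. This is precisely the shape of Holmes' construction (the paper recalls it in \cref{remark_construction_NM}: $\mathcal N = \Pic^{[0]}_{\mathcal C/S}/E$ with $E$ the closure of the unit section), but the hard part of Holmes' theorem is exactly to show that $E$ is a flat subgroup and that the quotient exists as a separated algebraic space; this is established under the additional hypothesis that the total space $\mathcal C$ is \emph{regular}, and it is well known to fail without some such hypothesis (e.g.\ \cref{example_nondisciplined}). Toric additivity does not hand you regularity of $\mathcal C$ for free, and your proposal gives no argument for the flatness of $K$ or the representability and separatedness of $P/K$ in the non-regular case -- you call these ``automatic.'' The paper's actual strategy is to avoid this obstacle entirely: it introduces a notion of \emph{disciplined} curve (\cref{def_disciplined}), proves that toric additivity implies disciplined (\cref{TA->disc}), proves that disciplined curves admit an \'etale-local nodal desingularization of the total space (\cref{disc->reg}), checks that toric additivity and existence of a N\'eron model are insensitive to the blow-up (\cref{TA_blowups}, \cref{smooth_base_change}/\cref{desc_smooth}), and only \emph{then} invokes Holmes via the equivalence of alignment and toric additivity in the regular-total-space case (\cref{regular-aligned-TA}). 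That reduction to the regular case is the content of the theorem; it is absent from your proposal.

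\textbf{On direction (b).} The reduction to $n=2$ is not justified: you need to argue that failure of toric additivity at $s$ forces failure at some codimension-two generic point $\zeta_{ij}$, and this is not stated or proved (the known direction, \cref{ta_lemmino}, goes the other way: toric additivity at $s$ implies it at generizations). Your appeal to excellence here is misplaced as well -- strict henselization always preserves the nodal structure and the labelled graphs; the paper uses excellence for an entirely different reason, namely Artin approximation in \cref{NM->disc} to produce an actual section of $\mathcal C$ through a node. After the reduction, ``failure of $h_1(\Gamma)=h_1(\Gamma_1)+h_1(\Gamma_2)$ yields a mixed cycle'' is too loose (it can be witnessed by a single loop of mixed label $t_1^a t_2^b$, with no ``edges of both labels'' at all), and the passage from such a cycle to a section of $J$ whose extension contradicts separatedness is asserted, not argued. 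The paper's proof of this implication (\cref{NM->disc}) is quite different and more delicate: it produces a torsion-free rank-one sheaf $\mathcal F = \mathcal I(\sigma_p)\otimes\mathcal O(\sigma_q)$, uses the injectivity of the purity map on component groups (\cref{injective_comps}) to land $\alpha$ in $\mathcal N^0$, restricts along a trait $Z\ra S$, and compares $\mathcal F$ with the resulting line bundle $\mathcal L$ to reach a contradiction with $\dim_{k(p)}\mathcal F\otimes k(p)=2$. Nothing in your sketch substitutes for this argument.

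In short: the proposal identifies correctly that Holmes-type one-dimensional N\'eron model theory and the purity/component-group structure are the tools, but it bypasses the central idea of the paper (disciplined $\Leftrightarrow$ can make $\mathcal C$ regular by blow-up, and both hypotheses of the theorem imply disciplined) and leaves exactly the non-regular difficulties unaddressed.
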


The fact that the condition of toric additivity can be formulated merely in terms of the generalized jacobian $\Pic^0_{\mathcal C/S}$ -- the unique semi-abelian model of $\Pic^0_{\mathcal C_U/U}$, functorial with respect to $S$ -- is particularly convenient: it has the consequence that toric additivity (and, in view of \cref{mainthm}, the property of existence of a N\'eron model for the jacobian) is stable with respect to various types of base change. For example, the property is stable under any morphism $f\colon T\ra S$ such that $f^{-1}D$ is still a normal crossing divisor, and such that \'etale locally $f$ induces a bijection between the components of $f^{-1}D$ and of $D$ (see \cref{lemma_bc}).

Also, replacing the curve $\mathcal C/S$ by another nodal family $\mathcal C'/S$ via a blow-up of the total space $\mathcal C$ does not affect the property of toric additivity (\cref{TA_blowups}), since the blow-up induces the identity at the level of $\Pic^0$.

\subsection*{Relation with Holmes' alignment condition}
 The question of existence of N\'eron models of jacobians over higher dimensional bases was first raised in \citep{holmes}, where Holmes gave it a negative answer: for a regular base $S$, and a nodal curve $\mathcal C/S$ smooth over an open dense $U\subset S$, he showed that the jacobian $\Pic^0_{\mathcal C_U/U}$ does not always admit a N\'eron model. He actually showed that existence of N\'eron models of jacobians depends on a rather restrictive combinatorial condition, called \textit{alignment}, on the dual graphs of the fibres of $\mathcal C/S$ endowed with a certain labelling of the edges.

\begin{theorem}[\citep{holmes}, theorem 5.16, theorem 5.2]\label{ThmDavid}
Suppose $S$ is a regular scheme, $U\subseteq S$ an open dense, $\mathcal C/S$ a nodal curve, smooth over $U$. Let $J=\Pic^0_{\mathcal C_U/U}$ be the relative jacobian. 
\begin{itemize}
\item[i)] if $J$ admits a N\'eron model over $S$, then $\mathcal C/S$ is aligned;
\item[ii)] if moreover the total space $\mathcal C$ is regular, and $\mathcal C/S$ is aligned, then $J$ admits a N\'eron model over $S$.
\end{itemize}
\end{theorem}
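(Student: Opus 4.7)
The plan is to treat the two directions with a common framework centred on $\Pic^{[0]}_{\mathcal{C}/S}$, the functor of line bundles of total degree $0$ whose partial degree on each irreducible component of every geometric fibre is also $0$. On $U$ this agrees with $J$, and whenever $\mathcal{C}$ is regular the functor is representable by a separated, smooth, quasi-projective group scheme over $S$ (Raynaud; see BLR Ch.~9, \S5). Thus $\Pic^{[0]}_{\mathcal{C}/S}$ is the universal candidate Néron model, and the whole problem becomes checking the Néron mapping property for this scheme.

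For part (ii), I assume $\mathcal{C}$ regular and $\mathcal{C}/S$ aligned and verify the Néron mapping property for $\Pic^{[0]}_{\mathcal{C}/S}$ directly. The property is smooth-local on the base, so one reduces to the strictly henselian case: $S=\Spec A$ with $A$ a strictly henselian regular local ring with parameters $t_1,\dots,t_d$ and $D=V(t_1\cdots t_n)$. Separation of $\Pic^{[0]}$ gives uniqueness, so it remains to extend a section $\sigma\colon U\to J$, corresponding to a rigidified line bundle $L$ on $\mathcal{C}_U$, to an object of $\Pic^{[0]}_{\mathcal{C}/S}(S)$. Regularity of $\mathcal{C}$ lets us extend $L$ as a Cartier divisor on $\mathcal{C}$; the only obstruction is to correct the partial degrees on vertical components by twisting with $\Z$-linear combinations of divisors supported on special fibres. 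That obstruction sits in a cokernel computed from the labelled dual graphs of the fibres, and I would show, edge-cycle by edge-cycle, that the alignment hypothesis is precisely what makes this cokernel vanish.

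For part (i), I argue the contrapositive: suppose $\mathcal{C}/S$ is not aligned, so at some geometric point $s\in S$ a cycle of the dual graph $\Gamma_s$ contains two edges $e,e'$ whose thickness divisors on $S$ are incomparable -- say $e$ involves $t_i$ and $e'$ involves $t_j$ with $i\neq j$. I would restrict to two traits $T_i,T_j\hookrightarrow S$ meeting $D$ only along $D_i$, respectively $D_j$, and use the classical Raynaud--BLR description of the Néron model over a trait. The two restrictions yield different classes in the component group scheme attached to the cycle through $e,e'$. Translated back, any hypothetical Néron model $\mathcal{N}/S$ would have to carry two sections agreeing on $U$ but differing at $s$, contradicting either smoothness or separatedness of $\mathcal{N}$; hence no Néron model exists.

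The main obstacle is the forward direction (ii), specifically the bookkeeping that pinpoints alignment as the exact vanishing condition for the extension obstruction. The crux is to translate the problem of extending a rigidified line bundle through the singular fibre into an explicit condition on principal divisors on the weighted dual graph, where the weights come from the local equations $xy=f$ at the nodes. Regularity of $\mathcal{C}$ is essential here, since it forces each $f$ to be (up to a unit) a monomial in $t_1,\dots,t_n$, cleanly factoring the node over the components of $D$; without this one loses the clean combinatorial reformulation.
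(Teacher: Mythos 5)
This theorem is not proved in the paper: it is quoted from Holmes, and the only indication of its proof given here is \cref{remark_construction_NM}, which records that in case (ii) the N\'eron model is the quotient $\Pic^{[0]}_{\mathcal C/S}/E$, where $\Pic^{[0]}_{\mathcal C/S}$ is the functor of line bundles of \emph{total} degree zero and $E$ is the closure of the unit section, the point of alignment being that it makes $E$ a flat subgroup space so that the quotient exists as a smooth separated algebraic space. Your plan for (ii) diverges from this in a way that breaks: the object you set up (line bundles whose partial degree on \emph{every} component of every geometric fibre is zero) is $\Pic^{0}_{\mathcal C/S}$, and by \cref{A=N0} this is only the fibrewise identity component $\mathcal N^{0}$ of the N\'eron model. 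Its N\'eron mapping property fails whenever the component group $\Phi$ is nontrivial, which already happens over a trait (automatically aligned) with $\mathcal C$ regular and reducible special fibre, e.g.\ an elliptic fibration of type $I_2$: a $K$-point of $J$ lying in the non-identity component of the N\'eron model does not extend to a line bundle of multidegree zero on $\mathcal C$. Equivalently, the ``cokernel computed from the labelled dual graphs'' that you want alignment to kill is essentially $\Phi$ itself, and alignment does not make it vanish. If instead you only impose total degree zero, then $\Pic^{[0]}$ is not separated (the closure of the unit section is strictly larger than the unit section), so your appeal to ``separation gives uniqueness'' fails. The correct mechanism is the one in the remark: alignment $\Rightarrow$ $E$ flat (\'etale) over $S$ $\Rightarrow$ the quotient $\Pic^{[0]}/E$ is a smooth separated group algebraic space, and it is this quotient, not $\Pic^0$, for which one verifies the N\'eron property.

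Part (i) as sketched is also not yet a proof. There is no ``component group scheme attached to the cycle through $e,e'$'', and you never construct the specific section of $J$ over $U$ (or the specific pullbacks to traits) whose non-extendability, or whose incompatible extensions, yields the contradiction; saying the hypothetical $\mathcal N$ ``would have to carry two sections agreeing on $U$ but differing at $s$'' does not follow from what precedes it, since two sections of a separated space agreeing on a dense open simply agree. The necessity direction in Holmes requires an actual argument: roughly, from a N\'eron model one gets the \'etale group space $\Phi$ of components, and pulling back along suitable traits through the non-aligned point one shows that the labels $l(e),l(e')$ with no common power force incompatible images in the component groups of the two trait-restrictions, contradicting the existence of a single \'etale $\Phi$ over the local base (compare the purity map \cref{purity_phi} and \cref{injective_comps}, which is the kind of statement your sketch would need to prove and then violate). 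As it stands, both halves of the proposal are missing the key ideas of the cited proof.
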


In part ii), the hypothesis that the total space $\mathcal C$ is regular is essential. Holmes' work left open the question of whether the jacobians of non-regular, aligned nodal curves admit a N\'eron model. Even in the apparently tame case of \cref{example_nondisciplined} below it was not known what to expect.

\begin{example}\label{example_nondisciplined}
Let $k$ be a field, $S=\Spec k[[u,v]]$, $U=S\setminus \{uv=0\}$. Let $\mathcal E/S$ be the family of nodal curves of arithmetic genus one 
$$y^2-x^3-x^2-uv=0$$
which is smooth over $U$, and aligned. The total space $\mathcal E$ is not regular, as the point $x=0,y=0,u=0,v=0$ is singular, and there exists no nodal model $\mathcal E'/S$ of $\mathcal E_U$ with regular total space $\mathcal E'$.

\end{example}

Toric additivity is a stronger condition than alignment (\cref{ta_aligned}); and it turns out to be equivalent to it in the case where the total space $\mathcal C$ is regular (\cref{regular-aligned-TA}). It can be easily checked that the curve $\mathcal E/S$ of \cref{example_nondisciplined} is not toric-additive (as in this case $h_1(\Gamma)=1<1+1=h_1(\Gamma_1)+h_1(\Gamma_2)$), hence the jacobian of $\mathcal E_U$ does not admit a N\'eron model over $S$.

\subsection*{The proof of \cref{mainthm}}

The strategy of proof follows these lines: we show in \cref{disc->reg}, \cref{TA->disc} and \cref{NM->disc} that if the hypotheses of a) or b) are satisfied, there exists a blow-up $\mathcal C'\ra \mathcal C$, such that $\mathcal C'/S$ is still a nodal curve, smooth over $U$, and $\mathcal C'$ is \textit{regular}. \Cref{NM->disc} shows in particular that the curve of \cref{example_nondisciplined} does not admit a N\'eron model. As the properties of admitting a N\'eron model or of being toric-additive are not affected by desingularization, we have reduced to the case where the relative curve has regular total space. In this case, it can be shown that alignment and toric additivity are equivalent, and we apply \cref{ThmDavid} to reach the conclusion.

\subsection*{Outline}
In \cref{s1}, we define the objects with which we work - mainly nodal curves, their dual graphs, their jacobians. In \cref{section_TA}, we define toric additivity (\cref{def_ta_local}). In order to do this, we introduce the concept of purity map between character groups (\cref{character_map}). In \cref{sectionNM} we define N\'eron models (\cref{defn_NM}); after listing some of their properties, we look at their group of connected components and define a purity map (\cref{purity_phi}) also for connected components. In \cref{section_finale}, we recall the definition of alignment (\cref{def_aligned}) and compare it to toric additivity. Then we prove the key lemmas from which \cref{main_thm_curves} follows.

\subsection*{Acknowledgements}
This article stems from part of my PhD thesis; my warmest thank goes to my supervisor David Holmes for introducing me to this area of research and for the effort he put into supervising me.

I would also like to thank Hendrik W. Lenstra  for pointing out a significant error in a earlier version of my thesis; the corrections I applied improved the overall quality of this article a great deal.

Finally I thank Qing Liu, Bas Edixhoven, Raymond van Bommel and Thibault Poiret for useful discussions.
\newpage
\tableofcontents
\newpage
\section{Preliminaries}\label{s1}
\subsection{Nodal curves}

\begin{definition} A \textit{curve} $C$ over an algebraically closed field $k$ is a proper morphism of schemes $C\ra \Spec k$, such that $C$ is connected and its irreducible components have dimension $1$. A curve $C/k$ is called \textit{nodal} if for every non-smooth point $p\in C$ there is an isomorphism of $k$-algebras $\widehat{\O}_{C,p}\ra k[[x,y]]/xy$.

For a general base scheme $S$, a \textit{nodal curve} $f\colon\mathcal C\ra S$ is a proper, flat morphism of finite presentation, such that for each geometric point $s$ of $S$ the fibre $\mathcal C_s$ is a nodal curve.
\end{definition}

We will denote by $\mathcal C^{ns}$ the subset of $\mathcal C$ of points at which $f$ is not smooth. Seeing $\mathcal C^{ns}$ as the closed subscheme defined by the first Fitting ideal of $\Omega^1_{\mathcal C/S}$, we have for a nodal curve $\mathcal C/S$ that $\mathcal C^{ns}/S$ is finite, unramified and of finite presentation.

The local structure of nodal curves is described by the following lemma from \citep{holmes}.

\begin{lemma}[\citep{holmes}, Prop.2.5]\label{thickness_lemma}
Let $S$ be locally noetherian, $f\colon\mathcal C\ra S$ be nodal, and $p$ a geometric point of $\mathcal{C}^{ns}$ lying over a geometric point $s$ of $S$. We have:
\begin{itemize}
\item[i)] there is an isomorphism
$$\widehat \O^{sh}_{\mathcal C, p}\cong \frac{\widehat \O_{S,s}^{sh}[[x,y]]}{xy-\alpha}$$
for some element $\alpha$ in the maximal ideal of the completion $\widehat \O^{sh}_{S,s}$;
\item[ii)] the element $\alpha$ is in general not unique, but the ideal $(\alpha)\subset\widehat \O^{sh}_{S,s}$ is. Moreover, the ideal is the image in $\widehat \O^{sh}_{S,s}$ of a unique principal ideal $I\subset \O^{sh}_{S,s}$, which we call \textit{thickness} of $p$.
\end{itemize}
\end{lemma}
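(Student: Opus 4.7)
The plan is to work inside the complete strict Henselizations $R := \widehat{\O}^{sh}_{S,s}$ and $A := \widehat{\O}^{sh}_{\mathcal{C},p}$, both complete noetherian local rings with common algebraically closed residue field $k$. The nodal hypothesis gives two key ingredients: $A$ is flat over $R$, and $A/\mathfrak{m}_R A$ is the completion of the fibre $\mathcal{C}_s$ at $p$, hence isomorphic to $k[[x,y]]/(xy)$. The whole argument then reduces to classifying flat formal deformations of the node $k[[x,y]]/(xy)$ over $R$, which we aim to show are parametrised by a single element $\alpha \in \mathfrak{m}_R$ up to units.

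For part (i), I pick arbitrary lifts $\tilde x, \tilde y \in \mathfrak{m}_A$ of $x, y$, giving a continuous local map $\phi\colon R[[X,Y]] \to A$. Reducing modulo $\mathfrak{m}_R$, $\phi$ becomes the quotient $k[[X,Y]] \twoheadrightarrow k[[x,y]]/(xy)$, so topological Nakayama applied to $A$ (complete in the $\mathfrak{m}_A$-adic topology) yields surjectivity of $\phi$. Setting $I := \ker \phi$, flatness of $A$ over $R$ keeps $0 \to I \to R[[X,Y]] \to A \to 0$ exact after $\otimes_R k$, giving $I/\mathfrak{m}_R I \cong (XY) \subset k[[X,Y]]$. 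Lifting a generator produces $g = XY - \alpha(X,Y) \in I$ with $\alpha \in \mathfrak{m}_R \cdot R[[X,Y]]$; a standard change of variables -- dividing by the unit $1 - C(\tilde x,\tilde y)$ where $C$ is the $XY$-coefficient of $\alpha$, and modifying $\tilde x, \tilde y$ by elements of $\mathfrak{m}_R A$ to absorb the $X$-only and $Y$-only parts -- lets us assume $\alpha \in \mathfrak{m}_R$. To conclude $I = (g)$ I apply Nakayama to $I/(g)$: right exactness of $\otimes_R k$ forces $(I/(g)) \otimes_R k = 0$, and since $\mathfrak{m}_R$ lies in the Jacobson radical of the local noetherian ring $R[[X,Y]]$ and $I/(g)$ is finitely generated there, we get $I = (g)$.

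For part (ii), non-uniqueness of $\alpha$ is immediate (multiplying $\tilde x$ by a unit of $A$ multiplies $\alpha$ by a unit). For uniqueness of the ideal $(\alpha) \subset R$, observe that $(\alpha) = \ker\bigl(R \to A/(\tilde x, \tilde y)\bigr)$, where $\Spec A/(\tilde x, \tilde y)$ is intrinsically the formal completion at $p$ of $\mathcal{C}^{ns}$, being cut out by the first Fitting ideal of $\Omega^1_{A/R}$. Hence $(\alpha)$ depends only on $p$ and $\mathcal{C}/S$. For descent to $R_0 := \O^{sh}_{S,s}$: since $\mathcal{C}^{ns} \to S$ is finite, unramified, and of finite presentation, and the geometric point $p$ lies over $s$ with trivial residue field extension, the component of $\mathcal{C}^{ns}$ through $p$ is cut out étale-locally by an ideal $J \subset R_0$ with $J \cdot R = (\alpha)$. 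Principality of $J$ follows from Nakayama applied to $R_0$: $J/\mathfrak{m}_{R_0} J = (\alpha)/\mathfrak{m}_R(\alpha)$ is at most one-dimensional over $k$, so $J$ is generated by one element; uniqueness of $J$ is automatic as the defining ideal of a specific closed subscheme.

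I expect the main technical obstacle to be the change-of-variables step in part (i): reducing from a general $\alpha(X,Y) \in \mathfrak{m}_R \cdot R[[X,Y]]$ to a constant $\alpha \in \mathfrak{m}_R$ is essentially the formal versality of the standard family $k[[X,Y,t]]/(XY-t)$ as a deformation of the node, and concretely requires a convergent inductive construction modifying $\tilde x, \tilde y$ modulo successive powers of $\mathfrak{m}_R$. The flatness-plus-Nakayama skeleton and the descent argument in (ii) are routine by comparison.
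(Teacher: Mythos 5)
The paper does not prove this lemma itself; it is imported verbatim from Holmes (Prop.\ 2.5), so there is no in-paper argument to compare against. Assessing your proposal on its own merits: the overall strategy --- reduce to classifying flat formal deformations of the node $k[[x,y]]/(xy)$ over the complete local base $R := \widehat{\O}^{sh}_{S,s}$, produce a surjection $R[[X,Y]]\twoheadrightarrow A$, use flatness and Tor to identify the reduction of the kernel with $(XY)$, lift a generator, normalize, and apply Nakayama to show the lifted generator generates --- is indeed the standard route and is the one I would expect Holmes to follow. Your part (ii) --- characterizing $(\alpha)$ intrinsically as the kernel of $R\to A/(\tilde x,\tilde y)$, where $(\tilde x,\tilde y)$ is the first Fitting ideal of $\Omega^1_{A/R}$ and hence the pullback of $\mathcal C^{ns}$, and then using the finite-unramified structure of $\mathcal C^{ns}\to S$ over the strictly Henselian base together with faithful flatness of $\O^{sh}_{S,s}\to\widehat{\O}^{sh}_{S,s}$ to descend and to enforce principality and uniqueness of $J$ --- is exactly right.

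Two small cautions. First, you correctly flag that the reduction from $\alpha(X,Y)\in\mathfrak m_R\cdot R[[X,Y]]$ to a constant $\alpha\in\mathfrak m_R$ is the real work; your single-pass description (divide by $1-C$, then absorb the $X$-only and $Y$-only parts by translating $\tilde x,\tilde y$) does not terminate in one step, because each of those operations re-introduces the other kinds of terms at a higher $\mathfrak m_R$-order. This must be run as an iteration and one must check $\mathfrak m_R$-adic convergence of the coordinate changes; you say as much, but in a fully written proof this is the paragraph that needs the most care (it is precisely the statement that $XY-t$ over $k[[t]]$ is a versal deformation of the node, and that the obstruction space $T^2$ vanishes). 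Second, the parenthetical ``multiplying $\tilde x$ by a unit of $A$ multiplies $\alpha$ by a unit'' is off: multiplying $\tilde x$ by an arbitrary unit $u\in A^\times$ produces $u\tilde x\tilde y=u\alpha$, which need not lie in $R$, so this does not directly exhibit a second valid $\alpha'\in\mathfrak m_R$. The clean statement is that substituting $X\mapsto uX$ for $u\in R^\times$ identifies $R[[X,Y]]/(XY-\alpha)$ with $R[[X,Y]]/(XY-u^{-1}\alpha)$, so $\alpha$ is only well-defined up to $R^\times$ --- which is of course all you need, since the intrinsic argument then pins down $(\alpha)$.
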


\begin{definition}\label{thickness_def}
In the hypothesis of \cref{thickness_lemma}, we call the ideal $I\subset \O^{sh}_{S,s}$ the \textit{thickness} of $p$. 
\end{definition}

We remark that, for $S$ regular at $s$, $\mathcal C$ is regular at $p$ if and only if the thickness of $p$ is generated by a regular parameter of the regular ring $\O^{sh}_{S,s}$.

\subsubsection{Split singularities}
Let $k$ be a field (not necessarily algebraically closed), $C/k$ a nodal curve, $n\colon C'\ra C$ its normalization. Following \citep[10.3.8]{Liu}, we say that $p\in C^{ns}$ is a \textit{split ordinary double point} if its preimage $n^{-1}(p)$ consists of $k$-valued points. This implies in particular that $p$ is $k$-valued. Moreover, if $p$ belongs to two or more components of $C$, then it belongs to exactly two components $Z_1,Z_2$; these are smooth at $p$ and meet transversally (\citep[10.3.11]{Liu}). We say that $C/k$ has \textit{split singularities} if every $p\in C^{ns}$ is a split ordinary double point. 

\begin{remark}
A nodal curve $C/k$ attains split singularities after a finite separable extension $k\ra k'$, by \citep[10.3.7 b)]{Liu}. \end{remark}

\begin{remark} A nodal curve with split singularities has irreducible components that are geometrically irreducible. Indeed, either $C/k$ is smooth, in which case it is geometrically connected and therefore geometrically irreducible; or every irreducible component of the normalization of $C$ contains a $k$-rational point and is therefore geometrically irreducible. 
\end{remark}

\begin{lemma}\label{thicknessZariski}
Let $\mathcal C\ra S$ be a nodal curve and $s\in S$ such that $\mathcal C_s$ has split singularities. Let $p$ be a geometric point of $\mathcal C_s$. Then the thickness $I$ of $p$ is generated by an element of the Zariski-local ring $\O_{S,s}$.
\end{lemma}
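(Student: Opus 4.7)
The strategy is to refine \cref{thickness_lemma} under the split singularities hypothesis in two stages: first, descend from the strict henselization to the henselization; then from the henselization down to the Zariski local ring. The role of the split hypothesis is precisely to enable the first stage: the strict henselization appears in \cref{thickness_lemma} because the two analytic branches through $p$ are only guaranteed to be defined over $k(s)\sepcl$, but under split singularities they are in fact $k(s)$-rational.

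For the first stage, I would revisit the construction underlying \cref{thickness_lemma}: the isomorphism $\widehat{\O}^{sh}_{\mathcal C, p} \cong \widehat{\O}^{sh}_{S,s}[[x,y]]/(xy - \alpha)$ is obtained by taking $x$ and $y$ to be formal-local equations for the two analytic branches through $p$. If these branches are already $k(s)$-rational, the very same construction can be carried out over the henselization, yielding
\[\widehat{\O}^h_{\mathcal C, p} \cong \widehat{\O}^h_{S,s}[[x,y]]/(xy - \alpha), \qquad \alpha \in \widehat{\O}^h_{S,s}.\]
The uniqueness argument of \cref{thickness_lemma}~ii) then pins down a principal ideal $I^h \subset \O^h_{S,s}$ whose extension to $\O^{sh}_{S,s}$ is precisely the thickness $I$.

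For the second stage, I would use that $\O^h_{S,s}$ is the filtered colimit of Zariski local rings $\O_{S', s'}$ as $(S', s') \to (S, s)$ ranges over pointed étale neighborhoods with $k(s') = k(s)$. A generator of $I^h$ therefore has a representative in some such $\O_{S', s'}$; but any étale morphism inducing an isomorphism of residue fields at a point is Zariski-locally an open immersion at that point, so $\O_{S', s'} \cong \O_{S, s}$ and the generator can be viewed as an element $\alpha \in \O_{S, s}$, generating $I$. The main obstacle is the first stage: one has to inspect Holmes' proof of \cref{thickness_lemma} and verify that the construction of the branch coordinates really does descend from $k(s)\sepcl$ to $k(s)$ under the split hypothesis. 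Intuitively this should be transparent, since the strict henselization is introduced in \cref{thickness_lemma} solely to make the branches rational, a property the split hypothesis delivers over $k(s)$ directly; but verifying it does require re-reading the construction carefully.
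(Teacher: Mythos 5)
Your proposal diverges from the paper's argument, and stage 2 contains a genuine error. You claim that ``any \'etale morphism inducing an isomorphism of residue fields at a point is Zariski-locally an open immersion at that point.'' This is false. A standard counterexample: take $S = \Spec k[t]$ (char $k\neq 2$) and $S' = \Spec k[t,u]/(u^2-u-t)$, which is \'etale over $t\neq -1/4$; the fibre over $t=0$ consists of two points $(t,u)$ and $(t,u-1)$, each with residue field $k$, yet $S'\ra S$ is nowhere injective near either of them, and $\O_{S,(t)}\ra\O_{S',(t,u)}$ is a strict inclusion (the element $u$ is not a rational function of $t$). What is true is that such a map induces an isomorphism of \emph{henselizations}; precisely for this reason $\O^h_{S,s}$ is a genuine filtered colimit and the descent from $\O^h_{S,s}$ to $\O_{S,s}$ is nontrivial. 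Having a generator of $I^h$ in some $\O_{S',s'}$ does not hand you an element of $\O_{S,s}$, and your argument breaks exactly there. (Stage 1, descending the formal-local description to the henselization, is plausible but requires re-inspecting Holmes' construction; I will not belabor it since stage 2 is already fatal.)

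The paper sidesteps this entirely by using a geometric observation rather than a colimit argument. Because $\mathcal C^{ns}\ra S$ is finite unramified and $\mathcal C_s$ has split singularities, the fibre $\mathcal C^{ns}_s$ is a disjoint union of $k(s)$-rational points; by \citep{stacks}\href{http://stacks.math.columbia.edu/tag/04DG}{TAG 04DG} this forces $\mathcal C^{ns}\ra S$ to be, Zariski-locally near $s$, a disjoint union of closed immersions. The branch through $p$ is then cut out by an honest ideal $J\subset\O_{S,s}$, and one checks (as in Holmes' proof of \cref{thickness_lemma}) that $J\O^{sh}_{S,s}=I$. Since $\O_{S,s}\ra\O^{sh}_{S,s}$ is faithfully flat and $I$ is principal, $J$ is principal, which is the conclusion. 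The missing ingredient in your proof is precisely this way of producing the Zariski-local ideal $J$ \emph{before} descending: without it, there is nothing in $\O_{S,s}$ to which faithfully flat descent can apply.
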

\begin{proof}
The morphism $f\colon \mathcal C^{ns}\ra S$ is finite unramified. Because $\mathcal C_s$ has split singularities, we see by \citep{stacks}\href{http://stacks.math.columbia.edu/tag/04DG}{TAG 04DG}, that there exists an open neighbourhood $U$ of $s$ such that $f^{-1}(U)\ra U$ is a disjoint union of closed immersions. In particular, $\mathcal C^{ns}\ra S$ is a closed immersion at $p$, and to it we can associate an ideal $J$ in the Zariski-local ring $\O_{S,s}$. We see (for example by \citep[proof of part 2 of Prop. 2.5]{holmes}) that $I$ is the image of $J$ in $\O_{S,s}^{sh}$; and moreover, since $\O_{S,s}\ra \O_{S,s}^{sh}$ is faithfully flat and $I$ is principal, $J$ is principal as well, which completes the proof.
\end{proof}

\subsection{The relative Picard scheme}
Let $S$ be a connected base scheme, and $\mathcal C\ra S$ a nodal curve of arithmetic genus $g$. We denote by $\Pic^0_{\mathcal C/S}$ the \textit{degree-zero relative Picard} functor; it is constructed as the fppf-sheaf associated to the functor 
\begin{eqnarray}
P^0_{\mathcal C/S}\colon \Sch/S & \ra & \Ab  \nonumber\\
T\ra S & \mapsto & \Pic^0(\mathcal C\times_ST) \nonumber
\end{eqnarray}
where by definition $\Pic^0(\mathcal C\times_ST)$ is the group of isomorphism classes of invertible sheaves $\mathcal L$ on $\mathcal C\times_ST$ such that, for every geometric point $t$ of $T$ and irreducible component $X$ of the fibre $\mathcal C_t$, $\deg \mathcal L_{|X}=0$.

It turns out that the degree-zero Picard functor $\Pic^0_{\mathcal C/S}$ of a nodal curve has an easy description if $\mathcal C/S$ admits a section. In this case, it is given by 
\begin{eqnarray}\Pic^0_{\mathcal C/S}\colon \Sch/S&\ra & \Ab  \nonumber\\
T\ra S & \mapsto & \frac{\Pic^0(\mathcal C\times_ST)}{\Pic(T)} \nonumber
\end{eqnarray}

If $\mathcal C/S$ is a smooth curve, it is well known that $\Pic^0_{\mathcal C/S}$ is represented by an abelian scheme, called the \textit{jacobian} of $\mathcal C/S$. If $\mathcal C/S$ is only nodal, then $\Pic^0_{\mathcal C/S}$ is represented by a semi-abelian scheme of relative dimension $g$ (\citep[9.4/1]{BLR}); in particular, for every point $s\in S$, there exists an exact sequence of fppf-sheaves
\begin{equation}\label{exseq_Pic} 0\ra T \ra \Pic^0_{\mathcal C_s/s}\ra B\ra 0 
\end{equation}
where $B=\Pic^0_{\widetilde{\mathcal C}/s}$ is the jacobian of the normalization of $\mathcal C_s$, hence an abelian variety, and $T/k(s)$ is the biggest subtorus of $\Pic^0_{\widetilde{\mathcal C}/s}$. 

We call $\mu(s):=\dim T$ and $\alpha(s):=\dim B$ respectively the \textit{toric rank} and \textit{abelian rank} of $\Pic^0_{\mathcal C_s/s}$. These numbers are stable under base field extension. Notice that $\Pic^0_{\mathcal C_s/s}$ is automatically geometrically connected. 

The function $\mu\colon S\ra \Z_{\geq 0}$ which associates to a point $s$ the toric rank of $\Pic^0_{\mathcal C_s/s}$ is upper semi-continuous (\citep{faltings1990degeneration}, Remark 2.4 Chapter I); defining analogously $\alpha\colon S\ra \Z_{\geq 0}$, we obtain that $\mu+\alpha$ is constant and equal to $g$.

\begin{definition}
Let $C/k$ be a nodal curve over a field, and let $\overline k$ be a separable closure. Let $T/k$ be the torus of the exact sequence \ref{exseq_Pic}. The \'etale $k$-group scheme $X:=\mathcal Hom_k(T,\mathbb G_m)$ is the datum of the free abelian group $X(\overline k)=\Hom_{\overline k}(T_{\overline k},\overline k^{\times})$ of rank $\mu$, endowed with the continuous action of $\Gal(\overline k|k)$. It is called the \textit{character group} of the torus $T$.
\end{definition}

If the torus $T$ is split, for example if $k$ is separably closed, then the action of $\Gal(\overline k|k)$ is trivial. In this case, $X$ is a constant group scheme and we will simply see it as an abstract free abelian group  of rank $\mu$. 

\subsection{Dual graphs of nodal curves}

We start by listing some graph-theoretical notions. In what follows, we will use the word \textit{graph} to refer to a finite, connected, undirected graph $G=(V,E)$. 

A \textit{path} on $G$ is a walk on $G$ in which all edges are distinct, and that never goes twice through the same vertex, except possibly for the first and last; a \textit{cycle} is a path that starts and ends at the same vertex. A \textit{loop} is a cycle consisting of only one edge.
\begin{comment}
A \textit{tree} is a subgraph of $G$ that does not contain cycles. We say that a tree $T\subset G$ is a \textit{spanning tree} if it contains all vertices of $G$, in which case it is a maximal tree. Given a spanning tree $T\subset G$, the edges of $G$ that are not contained in $T$ are called \textit{links} with respect to $T$. The number of links of $G$ is independent of the chosen spanning tree and is equal to the first Betti number $h_1(G,\Z)$. If a spanning tree $T$ is fixed, for each of the links $e_1,\ldots,e_r$ with respect to $T$, the subgraph $T\cup e_i$ contains only one cycle $C_i$. The cycles $C_1,\ldots,C_r$ are called \textit{fundamental cycles} with respect to $T$. 
\end{comment}

Let now $C$ be a curve with split singularities over a field $k$. We define the \textit{dual graph} of $C$ as in \citep[10.3.17]{Liu}: it is the graph $\Gamma=(V,E)$ with $V=\{\mbox{irreducible components of }C\}$, $E=\{\mbox{singular points of }C\}$; the extremal vertices of an edge $p$ are the components containing $p$, which are indeed either one or two. 

The following result gives a graph theoretic interpretation of the character group of a curve and its rank.

\begin{lemma}\label{lemma_split_torus}
Let $C/k$ be a nodal curve with split singularities.

\begin{itemize}
\item[i)] The torus $T$ of the exact sequence \ref{exseq_Pic} is split.
\item[ii)] Let $\Gamma$ the dual graph of $C$. The character group $X$ of $T$ is canonically identified with the first homology group $H_1(\Gamma,\Z).$ In particular the first Betti number $h_1(\Gamma,\Z)$ is equal to $\mu=\rk X=\dim T=\mbox{toric rank of }\Pic^0_{C/k}.$
\end{itemize}
\end{lemma}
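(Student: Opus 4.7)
The plan is to reconstruct the torus $T$ explicitly from the combinatorics of the normalisation of $C$, and then identify this construction with the cellular cohomology of the dual graph. The key tool is the short exact sequence of sheaves on $C$ induced by the normalisation $n\colon C'\ra C$:
\beqs
1\ra \O_C^\times \ra n_\ast \O_{C'}^\times \ra \ca Q\ra 1,
\eeqs
where $\ca Q$ is a skyscraper sheaf supported on the set of nodes $C^{ns}$. The split-singularities hypothesis is crucial here: each node $p\in C^{ns}$ is $k$-rational and its two preimages $q_1,q_2\in C'$ are $k$-rational as well, so $\ca Q_p \cong (k^\times\times k^\times)/k^\times \cong \Ga_m$ as a $k$-group scheme. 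An analogous splitness statement must hold after any base change $k\ra k'$, so we actually get an exact sequence of fppf sheaves that behaves well in families of test schemes over $k$.

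Next, I would take global sections and extract the Picard group. Since $C$ is connected and proper, $H^0(C,\O_C^\times)=\Ga_m$; and because $C/k$ has split singularities, every irreducible component of $C'$ is geometrically irreducible (the Remark after the definition), so $H^0(C',\O_{C'}^\times)=\Ga_m^V$, indexed by the vertex set $V$ of $\Gamma$. The long exact sequence thus reads
\beqs
1\ra \Ga_m\xrightarrow{\Delta}\Ga_m^V\xrightarrow{\partial}\Ga_m^E\ra \Pic_{C/k}\ra \Pic_{C'/k}\ra 0.
\eeqs
After orienting the edges of $\Gamma$, the map $\partial$ is precisely the cellular coboundary of $\Gamma$ tensored with $\Ga_m$; its cokernel into $\Pic_{C/k}$ is therefore canonically the quotient $H^1(\Gamma,\Z)\ot_{\Z}\Ga_m \cong \Hom_k(H_1(\Gamma,\Z),\Ga_m)$, the last identification coming from the universal coefficient / Pontryagin duality of the free abelian group $H_1(\Gamma,\Z)\cong H^1(\Gamma,\Z)$. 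Call this split $k$-torus $T'$. Its image in $\Pic_{C/k}$ lies in the kernel of restriction to $\Pic_{C'/k}$, hence has trivial degree on each component, so $T'\sub \Pic^0_{C/k}$.

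To conclude, I would match dimensions. Applying $\chi$ to the similar sequence $0\ra\O_C\ra n_\ast \O_{C'}\ra \ca Q'\ra 0$ (with $\ca Q'$ a skyscraper of length $1$ at each node) yields the well-known identity $p_a(C)=h_1(\Gamma)+\sum_i g_i$, where $g_i$ is the genus of the (smooth, geometrically irreducible) $i$-th component of $C'$. On the other hand $\Pic^0_{C'/k}=\prod_i \Pic^0_{C'_i/k}$ is an abelian variety of dimension $\sum_i g_i$, so it coincides with the abelian quotient $B$ in \eqref{exseq_Pic} and $\alpha=\sum_i g_i$. From $\mu+\alpha=g=p_a(C)$ we obtain $\mu=h_1(\Gamma)=\dim T'$. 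Since $T'$ is a split subtorus of $\Pic^0_{C/k}$ of maximal possible dimension and $T$ is the unique maximal subtorus, $T'=T$. This proves (i) (as $T'$ was constructed to be split) and simultaneously (ii) (its character group is tautologically $H_1(\Gamma,\Z)$).

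\textbf{Main obstacle.} The delicate step is not the dimension count but verifying that the identification in (ii) is \emph{canonical}, i.e.\ independent of the choice of edge orientations used to write down the boundary map $\partial$. This is handled by phrasing the whole construction intrinsically in terms of the cellular cochain complex $C^\bullet(\Gamma,\Ga_m)$, whose cohomology depends only on $\Gamma$, and by noting that the free module $H_1(\Gamma,\Z)$ has a canonical $\Ga_m$-dual equal to $H^1(\Gamma,\Ga_m)$. One must also take some care to perform the cohomology computation at the level of fppf sheaves over $\Sch/k$ rather than just on $k$-points, so that the identification $T=T'$ is an equality of $k$-group schemes and not merely of abstract groups; this follows because the same sheaf sequence remains exact after any base change to $T\ra\Spec k$, the splitness assumption being preserved.
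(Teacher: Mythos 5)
Your proof is correct and follows essentially the same route as the paper: both push forward the unit exact sequence of the normalization, identify the resulting map $\Ga_m^V\to\Ga_m^E$ with the incidence/coboundary operator of $\Gamma$, and read off the torus and its character group. The only divergence is your Euler-characteristic dimension count at the end, which is a harmless detour: since $T'$ is by construction the kernel of $\Pic^0_{C/k}\to\Pic^0_{\widetilde C/k}$, it already equals the $T$ of the exact sequence \eqref{exseq_Pic} by definition, so maximality need not be invoked.
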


\begin{proof}
The normalization morphism and the structure morphisms fit into a commutative diagram
\begin{center}
\begin{tikzcd}
\widetilde C \arrow[r, "\pi"] \arrow[rd, "\widetilde p"] & C  \arrow[d, "p"] \\
& \Spec k 
\end{tikzcd}
\end{center}

Consider the exact sequence 
\begin{equation}\label{ex_seq_push}1\ra \O_{C}^{\times}\ra \pi_*\O_{\widetilde C}^{\times} \ra \frac{\pi_*\O_{\widetilde C}^{\times}}{\O_{C}^{\times}}\ra 1.
\end{equation}
It gives a long exact sequence
$$ p_*\O_{C}^{\times}\ra \widetilde p_*\O_{\widetilde C}^{\times}\ra p_*\frac{\pi_*\O_{\widetilde C}^{\times}}{\O_{C}^{\times}}\ra R^1p_*\O_{C}^{\times}\ra R^1p_*(\pi_*\O_{\widetilde C}^{\times})
$$

We write $V=\{v_1,\ldots,v_n\}$ for the set of components of $\widetilde C$ (the set of vertices of $\Gamma$) and $E=\{e_1,\ldots,e_r\}$ for the set of singular points of $C$ (the set of edges of $\Gamma$). The morphism $\O_C\ra \pi_*\O_{\widetilde C}$ is an isomorphism on the smooth locus and is given by the diagonal morphism $k\ra k\oplus k$ at the singular points because $C$ has split singularities. We make a choice of orientation of the edges of the dual graph, which allows us to identify the quotient of the diagonal morphism $k\ra k\oplus k$ above with the morphism $k\oplus k\ra k$, $(x,y)\ra x-y$. Hence the term $p_*\frac{\pi_*\O_{\widetilde C}^{\times}}{\O_{C}^{\times}}$ is identified with $\bigoplus_E \mathbb G_{m,k}$. Moreover, $R^1p_*\O_{C}^{\times}=\Pic^0_{C/k}$ and the natural morphism $R^1p_*(\pi_*\O_{\widetilde C}^{\times})\ra R^1\widetilde p_*\O_{\widetilde C}^{\times}=\Pic^0_{\widetilde C/k}$ is injective. We obtain an exact sequence 
$$1\ra \mathbb G_{m,k}\ra \bigoplus_{V}\mathbb G_{m,k}\ra \bigoplus_{E}\mathbb G_{m,k}\ra  \Pic^0_{C/k}\ra \Pic^0_{\widetilde C/k}.$$
The first map is the diagonal; the second is given by tensoring with $\mathbb G_{m,k}$ the linear map $M\colon k^V\ra k^E$ given by the incidence matrix of the oriented graph $\Gamma$. It follows that the kernel of $\Pic^0_{C/k}\ra \Pic^0_{\widetilde C/k}$ is the split torus $\coker M\otimes_k\mathbb G_{m,k}$. Notice that its rank is indeed $r-n+1=h_1(\Gamma,\Z)$. This proves i) and the second part of ii). For the remaining part of ii), apply $\Hom_k(\_\_,\mathbb G_{m,k})$ to the exact sequence 
$$\bigoplus_{V}\mathbb G_{m,k}\ra \bigoplus_{E}\mathbb G_{m,k}\ra T\ra 0$$
to obtain an exact sequence
$$0\ra X\ra \Z^E\ra \Z^V$$
the rightmost map being given by the transpose of the incidence matrix. This identifies $X$ with $H_1(\Gamma,\Z)$. 
\end{proof}

\subsubsection{Labelled dual graphs}\label{labelled}
Given a nodal curve $f\colon \mathcal C\ra S$ and a point $s$ of $S$ such that $\mathcal C_s$ has split singularities, we write $\Gamma_s=(V_s,E_s)$ for the dual graph associated to the fibre $\mathcal C_s$. Using the notation of \citep{holmes}, we write $L_s$ for the monoid of principal ideals of the (Zariski-)local ring $\O_{S,s}$; then we let $l_s\colon E_s\ra L_s$ be the function associating to each edge of $\Gamma_s$ the thickness of the corresponding singular point of $\mathcal C_s$ (which indeed is an ideal of $\O_{S,s}$, by \cref{thicknessZariski}). 

\begin{definition}\label{def_labelled_graph}The pair $(\Gamma_s,l_s)$ constructed above is called \textit{labelled graph} of $\mathcal C\ra S$ at the point $s$.
\end{definition}
\subsection{The generization map}
Let $\mathcal C\ra S$ be a nodal curve over a regular, strictly local scheme. Let $Z$ be a regular closed subscheme of $S$, defined by an ideal $\mathfrak a\subset \O(S)$ and with generic point $\zeta$, and let $s$ be the closed point of $S$. Our aim is to compare the fibres $\mathcal C_{\zeta}$ and $\mathcal C_s$, in terms of their graphs and character groups.

\begin{lemma}\label{split_sing_sh}
The fibre $\mathcal C_{\zeta}$ has split singularities. 
\end{lemma}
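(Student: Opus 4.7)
I aim to show that every non-smooth point $q$ of $\mathcal{C}_\zeta$ is a split ordinary double point, i.e., both $q$ itself and its two preimages in the normalization $\widetilde{\mathcal{C}_\zeta}$ are $k(\zeta)$-rational. The strategy is to use the strict henselian structure of $S$ to produce, at each node, a very explicit local model.

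First I reduce to the case where $\zeta$ is the generic point of $S$. Since $Z$ is a regular closed subscheme of the strictly local $S$, its coordinate ring is again a strictly henselian regular local ring, so I may replace $(S, \mathcal{C})$ by $(Z, \mathcal{C}_Z)$. Write $R := \O(S)$ and $K := k(\zeta) = \Frac R$. The closed fibre $\mathcal{C}_s$ has split singularities automatically, because $k(s)$ is separably closed. Following the argument in the proof of \cref{thicknessZariski} (invoking Stacks Tag 04DG), after shrinking $S$ to a Zariski neighbourhood of $s$ I can write $\mathcal{C}^{ns} = \coprod_i Z_i$ as a disjoint union of closed immersions $Z_i \hookrightarrow S$. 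As $\zeta$ is the generic point of the integral $S$, the $Z_i$ containing $\zeta$ are exactly those with $Z_i = S$, and each such $Z_i$ corresponds to a section $\sigma_i\colon S \to \mathcal{C}^{ns}$. In particular, the nodes of $\mathcal{C}_\zeta$ are the $K$-rational points $q_i := \sigma_i(\zeta)$.

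Now fix such an $i$ and put $\sigma := \sigma_i$, $p := \sigma(s)$. By \cref{thickness_lemma} combined with \cref{thicknessZariski} there is $\alpha \in R$ generating the thickness of $p$ and an isomorphism $\widehat\O_{\mathcal{C}, p} \cong \widehat R[[x, y]]/(xy - \alpha)$. A direct computation of the first Fitting ideal of $\Omega^1_{\mathcal{C}/S}$ in this model identifies the formal completion of $\mathcal{C}^{ns}$ at $p$ with $\Spec \widehat R/(\alpha)$. On the other hand, near $p$ this same subscheme equals $\sigma(S)$, whose completion at $p$ is $\Spec \widehat R$. The resulting isomorphism $\widehat R/(\alpha) \cong \widehat R$ forces $\alpha = 0$ in $\widehat R$ (as $\widehat R$ is a regular local domain), hence $\alpha = 0$ in $R$ by faithful flatness of $R \to \widehat R$. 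The formal model therefore becomes $\widehat R[[x, y]]/(xy)$, whose normalization is the product $\widehat R[[x]] \times \widehat R[[y]]$; this exhibits two $R$-smooth formal branches meeting transversally along $\sigma$.

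The final step is to descend this splitting to a statement about $\mathcal{C}_\zeta$ over $K$: using excellence of $R$ and hence of $\O_{\mathcal{C}, p}$, normalization commutes with completion and with the flat base change $R \to K$. Consequently $\widetilde{\mathcal{C}_\zeta}$ has two preimages of $q$, each arising from one of the two $R$-smooth formal branches and hence $K$-rational. This shows that $q$ is a split ordinary double point, and completes the proof. The most delicate step is this last descent from the formal splitting over $\widehat R$ to the corresponding geometric statement over $K$; the rest is essentially bookkeeping with the structure theorems already established.
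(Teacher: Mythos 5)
Your proposal follows the same overall strategy as the paper for the first half of the argument: reduce to $\zeta$ the generic point of $S$, exploit that $\mathcal C^{ns}\ra S$ is a disjoint union of closed immersions (since $S$ is strictly henselian), and identify the nodes of $\mathcal C_\zeta$ as images of sections $\sigma_i\colon S\ra\mathcal C^{ns}$, hence $K$-rational. Your Fitting-ideal computation showing $\alpha=0$ is exactly the content of the paper's terse assertion that these points ``have thickness zero.'' Up to this point the two arguments coincide.

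The divergence — and the gap — is in the second half, where you need to show that the preimages of each node in the normalization $\widetilde{\mathcal C_\zeta}$ are $K$-rational. You invoke \emph{excellence} of $R=\O^{sh}_{S,s}$ (and hence of $\O_{\mathcal C,p}$) to commute normalization with completion and with the localization $R\ra K$. But $S$ is only assumed regular and strictly local; excellence is not a hypothesis of the lemma (it only appears in the paper for part b) of \cref{main_thm_curves}). Without it, the strict henselization need not be Nagata and you cannot a priori conclude that the two formal branches over $\widehat R$ descend to branches of $\O_{\mathcal C,p}$, nor that the normalization of $\mathcal C_\zeta$ is governed by them. The paper circumvents this as follows: it first proves $\mathcal C\setminus X$ is \emph{normal} by Serre's criterion ($R_1$ + $S_2$, the latter from l.c.i.\ over a regular base), so the normalization $\pi\colon\mathcal C'\ra\mathcal C$ is an isomorphism away from $X$; it checks over the formal model that $\pi$ is finite and unramified; it then observes that $Y:=\pi^{-1}(X)$ is finite \'etale over the strictly henselian $S$, hence a disjoint union of sections; finally, since $\mathcal C'_\zeta$ is normal and $\pi_\zeta$ is finite birational, $\pi_\zeta$ \emph{is} the normalization of $\mathcal C_\zeta$, and $Y_\zeta$ gives the required $K$-rational preimages. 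This route avoids ever having to commute normalization with the non-excellent base change $R\ra\widehat R$ on the curve side. To repair your proof you would need to either add an excellence hypothesis (which weakens the lemma) or restructure the descent along the paper's lines, going through the global normalization of $\mathcal C$ rather than trying to descend the formal branching at a single point.
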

\begin{proof}
We drop the name $Z$ and simply assume that $\zeta$ is the generic point of $S$.
The non-smooth locus $\mathcal C^{ns}$ is finite unramified over $S$, hence a disjoint union of closed subschemes of $S$. Let $X\subseteq \mathcal C^{ns}$ be the part consisting of sections $S\ra \mathcal C$. Notice that on the generic fibre $X_{\zeta}=\mathcal C^{ns}_{\zeta}$. 

We claim that the open subscheme $\mathcal C\setminus X$ is normal. We will show it by using Serre's criterion for normality (\citep[8.2.23]{Liu}). First, as $X$ has been removed, $\mathcal C\setminus X$ is regular at its points of codimension $1$. Condition $S_2$ follows from the fact that $\mathcal C\setminus X$ is locally complete intersection over a regular, noetherian base, hence Cohen-Macaulay by \citep[8.2.18]{Liu}. This proves the claim. 

Our next claim is that the normalization $\pi\colon \mathcal C'\ra \mathcal C$ is finite and unramified. Since these are properties fpqc-local on the target, and since we already know that $\pi$ induces an isomorphism over $\mathcal C\setminus X$, it is enough to check the claim over the completion of the strict henselization of geometric points of $X$. Let $x$ be such a geometric point lying over a geometric point $s$ in $S$. Then $x$ has thickness zero, hence $\widehat{\O}_{\mathcal C,x}^{sh}\cong \frac{\widehat{\O}_{S,s}^{sh}[[u,v]]}{uv}.$
The normalization morphism 
$$\Spec \widehat{\O}_{S,s}^{sh}[[u]]\times \widehat{\O}_{S,s}^{sh}[[v]]\ra \Spec \frac{\widehat{\O}_{S,s}^{sh}[[u,v]]}{uv}$$
is indeed finite and unramified, which proves the claim.

Now, let $Y$ be the preimage of $X$ via $\pi\colon \mathcal C'\ra \mathcal C$. We have that $Y$ is finite, unramified over $X$, and in particular finite \'etale over $S$. Hence $Y$ is a disjoint union of sections $S\ra \mathcal C'$. The restriction of $\pi$ to the generic fibre, $\pi_{\eta}\colon \mathcal C'_{\eta}\ra\mathcal C_{\eta}$, is a normalization morphism, and we see that the preimage $Y_{\eta}$ of $X_{\eta}=(\mathcal C_{\eta})^{ns}$ consists of $k(\eta)$-valued points, as we wished to show.

\end{proof}

\begin{construction}\label{contraction_graph}
We can therefore associate a labelled graph (\cref{def_labelled_graph}) $\Gamma_{\zeta}=(V_{\zeta},E_{\zeta})$ to $\mathcal C_{\zeta}$. Every singular point of $\mathcal C_{\zeta}$ specializes to a singular point of $\mathcal C_s$ with thickness contained in the ideal $\mathfrak a$ of $Z$ in $\O_{S,s}$. So if $(\Gamma_s=(V_s,E_s),l_s)$ is the labelled graph of $\mathcal C_s$, we have an inclusion $E_{\zeta}\subset E_s$; and the labelled graph $\Gamma_{\zeta}$ is obtained from the labelled graph $\Gamma_s$ of the closed fibre by:
\begin{enumerate}
\item contracting all edges labelled by an ideal of $\O_{S,s}$ not contained in $\mathfrak a$; 
\item labelling edges in $\Gamma_{\zeta}$ by the image via the inclusion $\O_{S,s}\ra\O_{S,\zeta}$ of the label of the corresponding edge in $\Gamma_s$.
\end{enumerate}
\end{construction}

We also obtain a natural commutative diagram
\begin{equation}\label{diagram_bho}
\begin{tikzcd}
\Z^{E_s} \arrow[r]\arrow[d] & \Z^{V_s} \arrow[d] \\
\Z^{E_{\zeta}} \arrow[r] & \Z^{V_{\zeta}}
\end{tikzcd}
\end{equation}
The horizontal maps are the usual boundary maps, given by the incidence matrices of $\Gamma_s$ and $\Gamma_{\zeta}$. The left vertical map is induced by the inclusion $E_{\zeta}\subseteq E_s$. For the right vertical map, notice that we have a natural surjective map $\mbox{gen}_V\colon V_{s}\ra V_{\zeta}$; then, the image of a vertex labelling $\phi$ via $\Z^{V_s}\ra \Z^{V_{\zeta}}$ is the vertex labelling $\phi'$ with $\phi'(v)=\sum_{w\in \mbox{gen}_V^{-1}(v)} \phi(w).$

Taking kernels of the horizontal maps in diagram \ref{diagram_bho} we obtain a \textit{generization map} on homology 
\begin{equation}\label{generization_homology}
H_1(\Gamma_s,\Z)\ra H_1(\Gamma_{\zeta},\Z)
\end{equation}

Consider now the semi-abelian scheme $\Pic^0_{\mathcal C/S}$, and its restriction $P:=\Pic^0_{\mathcal C_Z/Z}$ to $Z$. Let $T'$ be the maximal torus contained in $P_{\zeta}$, which is split by \cref{lemma_split_torus}, and $T$ be the maximal torus contained in $P_{s}$. Let $X'$ and $X$ be the respective character groups. The closure of $T'$ inside $P$ is a split subtorus $\mathcal T'\subseteq P$ by \citep[Prop. 2.9 Chapter 1]{faltings1990degeneration}. In particular, the restriction $\mathcal T'_s$ is a subtorus of $T$, having character group $X'$. The inclusion $\mathcal T'_s\subseteq T$ induces a surjective homomorphism of free abelian groups 
\begin{equation}\label{specialization_tori}
X\ra X'
\end{equation}
called as well \textit{generization map} of characters groups.
Remember that by \cref{lemma_split_torus}, $X=H_1(\Gamma_s,\Z)$ and $X'=H_1(\Gamma_{\zeta},\Z)$. 

\begin{lemma}
The two generization maps $H_1(\Gamma_s)\ra H_1(\Gamma_{\zeta})$ and $X\ra X'$ are the same map.
\end{lemma}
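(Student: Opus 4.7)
My plan is to produce a single relative construction over $Z$ whose restriction to $\zeta$ is the normalization sequence of \cref{lemma_split_torus} for $\mathcal{C}_\zeta$ and whose restriction to $s$ factors that of \cref{lemma_split_torus} for $\mathcal{C}_s$ in a way that simultaneously encodes the specialization of tori and the graph-theoretic contraction. The starting point is the partial normalization $\pi\colon \mathcal{C}_Z'\to\mathcal{C}_Z$ along the closed subscheme $N$ of \emph{persistent} nodes---those singular points of $\mathcal{C}_Z$ whose thickness ideal is contained in $\mathfrak{a}$. By \cref{thicknessZariski} together with \cref{split_sing_sh}, $N$ is a disjoint union of sections $Z\to\mathcal{C}^{\mathrm{ns}}$ indexed by $E_\zeta$. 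Over $\zeta$, $\pi_\zeta$ is the full normalization of $\mathcal{C}_\zeta$; over $s$ it resolves only the persistent nodes, so the components of $\mathcal{C}_s'$ are indexed by $V_\zeta$ and the factorization $\widetilde{\mathcal{C}_s}\to\mathcal{C}_s'\to\mathcal{C}_s$ realizes the surjection $\mathrm{gen}_V\colon V_s\to V_\zeta$ on components. Since $Z$ is strictly local, $\mathcal{C}_Z'$ decomposes globally into $|V_\zeta|$ connected components, each with geometrically connected fibres over $Z$.

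I then repeat the proof of \cref{lemma_split_torus} relatively over $Z$: the short exact sequence $1\to\mathcal{O}_{\mathcal{C}_Z}^\times\to\pi_*\mathcal{O}_{\mathcal{C}_Z'}^\times\to\mathcal{F}\to 1$, with $\mathcal{F}$ supported on $N$ and identified with $\bigoplus_{E_\zeta}\mathbb{G}_{m,Z}$ after a chosen orientation of $\Gamma_\zeta$, pushes down along $p_Z\colon\mathcal{C}_Z\to Z$ to an exact sequence of fppf group schemes
$$1\to\mathbb{G}_{m,Z}\to\bigoplus_{V_\zeta}\mathbb{G}_{m,Z}\to\bigoplus_{E_\zeta}\mathbb{G}_{m,Z}\to\Pic^0_{\mathcal{C}_Z/Z}\to\Pic^0_{\mathcal{C}_Z'/Z}.$$
The image of $\bigoplus_{E_\zeta}\mathbb{G}_{m,Z}$ in $\Pic^0_{\mathcal{C}_Z/Z}$ is a subtorus whose generic fibre is $T'$ by \cref{lemma_split_torus}; by the uniqueness of extensions of subtori of a semi-abelian scheme (\citep[Prop.\ 2.9, Ch.\ I]{faltings1990degeneration}) this image equals the closure $\mathcal{T}'$. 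I expect this relative cohomological step---in particular verifying that $p_{Z*}\pi_*\mathcal{O}_{\mathcal{C}_Z'}^\times=\bigoplus_{V_\zeta}\mathbb{G}_{m,Z}$ and $p_{Z*}\mathcal{F}=\bigoplus_{E_\zeta}\mathbb{G}_{m,Z}$---to be the main technical work; both identifications will rely on the strict-locality of $Z$ established in the first step.

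Finally, restrict the relative sequence to $s$ and compare with the sequence of \cref{lemma_split_torus} for $\mathcal{C}_s$. The further normalization $\widetilde{\mathcal{C}_s}\to\mathcal{C}_s'$ produces, by functoriality, a commutative diagram
$$\begin{tikzcd}
\bigoplus_{V_\zeta}\mathbb{G}_{m,s} \arrow[r] \arrow[d,hook] & \bigoplus_{E_\zeta}\mathbb{G}_{m,s} \arrow[r] \arrow[d,hook] & \mathcal{T}'_s \arrow[r] \arrow[d,hook] & 1\\
\bigoplus_{V_s}\mathbb{G}_{m,s} \arrow[r] & \bigoplus_{E_s}\mathbb{G}_{m,s} \arrow[r] & T \arrow[r] & 1
\end{tikzcd}$$
whose left two vertical arrows are pullback along $\mathrm{gen}_V\colon V_s\to V_\zeta$ and the inclusion $E_\zeta\hookrightarrow E_s$ respectively. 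Applying $\mathcal{H}om_s(-,\mathbb{G}_{m,s})$ recovers exactly diagram \ref{diagram_bho}, and the induced map on kernels of the horizontal arrows is at once the specialization $X\to X'$ of character groups (read off from the right-hand column) and the graph-theoretic generization $H_1(\Gamma_s,\Z)\to H_1(\Gamma_\zeta,\Z)$ (read off from the left two columns), yielding the required equality.
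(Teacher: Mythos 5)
Your proof is correct and follows essentially the same route as the paper: both work with the partial normalization at the persistent nodes $E_\zeta$ over $Z$ (which is, as you can check from \cref{split_sing_sh}, the full normalization of $\mathcal C_Z$), factor the normalization of $\mathcal C_s$ through it, push the resulting comparison of short exact sequences down along $p_s$, and apply $\Hom_s(-,\mathbb G_m)$ to recover diagram \ref{diagram_bho} simultaneously with the map $X\to X'$. The only organizational difference is that you establish the relative four-term sequence over $Z$ first and then restrict to $s$, while the paper states the relative prolongation without detail and runs the comparison at the level of the closed fibre.
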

\begin{proof}
We write $\mathcal C/Z$ for the restriction of the curve to $Z$. Let $\pi\colon \widetilde {\mathcal C}\ra \mathcal C$ be the normalization. The restriction of $\pi$ to the generic fibre $\pi_{\zeta}\colon \widetilde{\mathcal C}_{\zeta}\ra \mathcal C_{\zeta}$ is also a normalization morphism. On the other hand the restriction to $s$, $\pi_s\colon \widetilde{\mathcal C}_s\ra \mathcal C_s$ is  only the partial normalization at the subset of singular points $E_{\zeta}\subseteq E_s$. Normalizing the remaining nodes we find the normalization $\pi^{\circ} \colon  C^{\circ}\ra \mathcal C_s$. We call $p\colon \mathcal C_s\ra \Spec k$, $q\colon \widetilde{\mathcal C}_s\ra \Spec k$, $r\colon C^{\circ}\ra \Spec k$ the three structure morphisms.

As in \cref{ex_seq_push}, the torus $T'$ fits into an exact sequence
$$0\ra \mathbb G_{m,\zeta}\ra \bigoplus_{V_{\zeta}}\mathbb G_{m,\zeta}\ra \bigoplus_{E_{\zeta}}\mathbb G_{m,\zeta}\ra T'\ra 0$$
which prolongs to an exact sequence 
$$0\ra \mathbb G_{m,Z}\ra \bigoplus_{V_{\zeta}}\mathbb G_{m,Z}\ra \bigoplus_{E_{\zeta}}\mathbb G_{m,Z}\ra \mathcal T'\ra 0$$

We have a commutative diagram with exact rows

$$
\begin{tikzcd}
0 \ar[r] & \O_{\mathcal C_s}^{\times}\ar[r]\ar[d] & \left(\pi_*\O_{\widetilde{\mathcal C}}^{\times}\right)_s \ar[r]\ar[d] & \left(\frac{\pi_*\O_{\widetilde{\mathcal C}}^{\times}}{\O_{\mathcal C}^{\times}}\right)_s \ar[r]\ar[d] & 0\\
0 \ar[r] & \O_{\mathcal C_s}^{\times}\ar[r] & \pi^{\circ}_*\O_{C^{\circ}}^{\times} \ar[r] & \frac{\pi^{\circ}_*\O_{C^{\circ}}^{\times}}{\O_{\mathcal C_s}^{\times}} \ar[r] & 0.
\end{tikzcd}
$$

Applying the functor $p_*$, we obtain a commutative diagram with exact rows

\begin{equation}
\begin{tikzcd}
0 \ar[r] & \mathbb G_{m,s}\ar[r]\ar[d] & \bigoplus_{V_{\zeta}}\mathbb G_{m,s} \ar[r]\ar[d] & \bigoplus_{E_{\zeta}}\mathbb G_{m,s} \ar[r]\ar[d] & \mathcal T'_s \ar[r]\ar[d] & 0\\
0 \ar[r] & \mathbb G_{m,s}\ar[r] & \bigoplus_{V_s}\mathbb G_{m,s} \ar[r] & \bigoplus_{E_s}\mathbb G_{m,s} \ar[r] & T \ar[r] & 0.
\end{tikzcd}
\end{equation}

The leftmost vertical map is the identity; the next vertical map is the natural map $q_*\O_{\widetilde{\mathcal C}_s}^{\times}\ra r_*\O_{\mathcal C_s^*}^{\times}$. Therefore it is simply the map induced by $\mbox{gen}\colon V_s\ra V_{\zeta}$. With a similar reasoning one deduces that the next vertical map is obtained by the inclusion $E_{\zeta}\ra E_s$. Finally, the rightmost vertical map is the inclusion of split tori found previously. 

Now, applying $\Hom_s(\_\_,\mathbb G_{m,s})$, we obtain the commutative diagram with exact rows
\begin{equation}
\begin{tikzcd}
0 \arrow[r] & X \arrow[r]\arrow[d] & \Z^{E_s} \arrow[r]\arrow[d] & \Z^{V_s} \arrow[d] \\
0\arrow[r] & X'\arrow[r] & \Z^{E_{\zeta}} \arrow[r] & \Z^{V_{\zeta}}
\end{tikzcd}
\end{equation}
where the first vertical map is the generization map $X\ra X'$ and the rightmost square is diagram \ref{diagram_bho}. 
\end{proof}

\section{Toric additivity}\label{section_TA}

\subsection{The purity map}
We consider a regular, strictly local base scheme $S$ with a normal crossing divisor $D=D_1\cup D_2\cup\ldots\cup D_n\subset S$ for some $n\geq 0$. We let $t_1,\ldots,t_n\in \O_{S,s}$ be the functions cutting out the components $D_1,\ldots,D_n$. We also let $\mathcal C/S$ be a nodal curve such that the base change $C_U/U$ to $U=S\setminus D$ is smooth. 

Write $\zeta_1,\ldots,\zeta_n$ for the generic points of the irreducible components $D_1,\ldots,D_n$, and $C_1,\ldots, C_n$ for the fibres over each of them. Each $C_i$ has split singularities, hence by \cref{lemma_split_torus} we can attach to it a dual graph $\Gamma_i$, a split torus $T_i\subseteq \Pic^0_{C_i}$ of dimension $\mu_i=h_1(\Gamma_i,\Z)$, and a free abelian group $X_i=H_1(\Gamma_i,\Z)$ of rank $\mu_i$. Similarly, we let $s\in S$ be the closed point, with residue field $k$, and $C/k$ be the fibre, with dual graph $\Gamma$, (split) torus $T$ of dimension $\mu=h_1(\Gamma,\Z)$, character group $X=H_1(\Gamma,\Z)$ of rank $\mu$. 

For every $i=1,\ldots,n$ there is a generization map \ref{specialization_tori} of character groups $X\ra X_i$; It is natural to define the following:

\begin{definition}\label{character_map}
We call \textit{purity map} the group homomorphism
\begin{equation}\label{purity_char}
X\ra X_1\oplus X_2\oplus\ldots X_n
\end{equation}
induced by the generization maps $X\ra X_i$.
It may be seen as a homomorphism of homology groups
\begin{equation}\label{purity_betti}
H_1(\Gamma,\Z)\ra H_1(\Gamma_1,\Z)\oplus\H_1(\Gamma_2,\Z)\oplus\ldots\oplus H_1(\Gamma_n,\Z).
\end{equation}
\end{definition}

\begin{lemma}\label{injective_purity}
The purity map $X\ra X_1\oplus\ldots\oplus X_n$ is injective.
\end{lemma}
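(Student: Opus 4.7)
The plan is to give each component $X\to X_i$ of the purity map a concrete combinatorial description on cycles, and then to observe that every edge of the closed fibre's dual graph survives in at least one $\Gamma_i$.

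First, I would unwind the identification coming from the previous lemma. That lemma realises $X=H_1(\Gamma,\Z)\subseteq \Z^{E_s}$ and each $X_i=H_1(\Gamma_i,\Z)\subseteq \Z^{E_{\zeta_i}}$, and identifies the generization map $X\to X_i$ with the restriction of the projection $\Z^{E_s}\twoheadrightarrow \Z^{E_{\zeta_i}}$ that forgets the coordinates indexed by the contracted edges $E_s\setminus E_{\zeta_i}$. With this description, a cycle $c=\sum_{e\in E_s}c_e\cdot e\in X$ lies in the kernel of the purity map precisely when $c_e=0$ for every $e\in \bigcup_{i=1}^n E_{\zeta_i}$.

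The crux is therefore the set-theoretic identity $\bigcup_{i=1}^n E_{\zeta_i}=E_s$: every node of the closed fibre has to survive in at least one $\Gamma_i$. To verify this, pick $e\in E_s$ with thickness ideal $(\alpha)\subseteq \O_{S,s}$ (\cref{thickness_lemma}); note $\alpha$ is a nonunit. Since $\mathcal C_U/U$ is smooth, the locus $V(\alpha)$ along which the node persists must be contained in $D=\bigcup_j V(t_j)$. The ring $\O_{S,s}$ is a regular local, hence a UFD, in which the $t_j$ extend to a regular system of parameters; its height-one primes inside $V(t_1\cdots t_n)$ are exactly $(t_1),\ldots,(t_n)$. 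Unique factorization then forces $\alpha=u\cdot t_1^{a_1}\cdots t_n^{a_n}$ with $u$ a unit and integers $a_j\geq 0$, and at least one $a_j\geq 1$ because $\alpha$ is a nonunit. By \cref{contraction_graph} one has $e\in E_{\zeta_i}$ iff $(\alpha)\subseteq (t_i)$ iff $a_i\geq 1$, so $e$ does lie in some $E_{\zeta_i}$.

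Combining the two steps, any cycle in the kernel of the purity map has all edge-coefficients zero, and is therefore zero. The only real content is the factorization of the thickness in the second step, using the UFD property of $\O_{S,s}$ together with the normal-crossing structure of $D$; the rest is a small diagram chase read off from the preceding lemma.
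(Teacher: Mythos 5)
Your proof is correct and takes essentially the same approach as the paper: the key point in both is that every edge of $\Gamma_s$ survives in at least one contraction $\Gamma_{\zeta_i}$, because its thickness is a nonunit product of powers of the $t_j$, and injectivity of the purity map on $H_1(\Gamma_s,\Z)\subseteq\Z^{E_s}$ then follows at once from the coordinate-projection description of the generization maps. The only difference is that the paper simply asserts the form of the labels, whereas you justify the factorization of the thickness via smoothness of $\mathcal C/U$ and unique factorization in the regular local ring $\O_{S,s}$; this is a welcome amplification, not a new argument.
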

\begin{proof}
Every edge of $\Gamma$ is labelled by a principal ideal generated by a product of powers of $t_1,t_2,\ldots,t_n$. Hence every edge is preserved in at least one contraction $\Gamma_i$, as one can see from \cref{contraction_graph}. It immediately follows that the canonical morphism  $H_1(\Gamma,\Z)\ra H_1(\Gamma_1,\Z)\oplus\ldots\oplus H_1(\Gamma_n,\Z)$ is injective. 
\end{proof}

\begin{corollary}\label{inequality}
We have the inequalities of toric ranks
\begin{equation}\label{ineq_toric}
\mu\leq \mu_1+\mu_2+\ldots+\mu_n.
\end{equation}
and of Betti numbers
\begin{equation}\label{ineq_betti}
h_1(\Gamma,\Z)\leq h_1(\Gamma_1,\Z)+h_1(\Gamma_2,\Z)+\ldots+h_1(\Gamma_n,\Z).
\end{equation}
\end{corollary}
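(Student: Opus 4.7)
The plan is to derive both inequalities immediately from the injectivity statement of \cref{injective_purity} by taking ranks. Since all groups involved are finitely generated free abelian groups, an injective homomorphism between them forces the rank of the source to be at most the rank of the target.

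More concretely, I would argue as follows. By \cref{injective_purity}, the purity map
$$X\ra X_1\oplus X_2\oplus\ldots\oplus X_n$$
is an injective homomorphism of free abelian groups. Taking ranks on both sides gives
$$\rk X \leq \rk X_1 + \rk X_2 + \ldots + \rk X_n.$$
By the definitions preceding \cref{character_map}, we have $\rk X = \mu$ and $\rk X_i = \mu_i$, yielding the first inequality (\ref{ineq_toric}).

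For the Betti number inequality (\ref{ineq_betti}), I would invoke the identification provided by \cref{lemma_split_torus}, which gives canonical isomorphisms $X \cong H_1(\Gamma,\Z)$ and $X_i \cong H_1(\Gamma_i,\Z)$, together with the equalities $\mu = h_1(\Gamma,\Z)$ and $\mu_i = h_1(\Gamma_i,\Z)$. Substituting these into the first inequality immediately gives (\ref{ineq_betti}).

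There is essentially no obstacle here: all the work is already contained in \cref{injective_purity} (injectivity of the purity map) and in \cref{lemma_split_torus} (translation between toric ranks and first Betti numbers of dual graphs). The corollary is a purely formal consequence of taking ranks, so the proof should be a single short paragraph.
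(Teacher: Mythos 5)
Your argument is correct and matches the paper's own proof exactly: both derive the inequality by taking ranks in the injective purity map from \cref{injective_purity}, with the Betti-number version following via the identification in \cref{lemma_split_torus}. No issues.
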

\begin{proof}
Follows by taking ranks in the injective homomorphism $X\ra X_1\oplus X_2\oplus \ldots X_n$.
\end{proof}

\begin{lemma}\label{tf_coker}
The purity map $X\ra X_1\oplus X_2\oplus\ldots\oplus X_n$ has torsion-free cokernel.
\end{lemma}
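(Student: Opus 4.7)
My plan is to reduce the claim to a purely combinatorial divisibility observation on the cycle groups of the dual graphs, using the explicit description of the purity map in terms of edge projections. From \cref{lemma_split_torus} we have canonical inclusions $X \hookrightarrow \Z^E$ and $X_i \hookrightarrow \Z^{E_i}$ as kernels of the boundary maps. Chasing diagram \eqref{diagram_bho} (and using the identification of the generization maps proved just after it), the generization map $X \to X_i$ is nothing but the restriction to $X$ of the projection $\Z^E \twoheadrightarrow \Z^{E_i}$ that remembers only the coefficients of edges surviving in $\Gamma_i$; this is because, by \cref{contraction_graph}, $E_i \subseteq E$ and the boundary of a contracted edge is zero in $\Z^{V_i}$.

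Given this, the claim that the cokernel is torsion-free unwinds to the following: if $(x_1, \ldots, x_n) \in X_1 \oplus \cdots \oplus X_n$ and some integer $m \geq 1$ satisfies $m \cdot (x_1,\ldots,x_n) \in \operatorname{im}(X)$, then $(x_1,\ldots,x_n) \in \operatorname{im}(X)$ already. So I would take a cycle $\gamma = \sum_{e \in E} c_e\, e \in X$ whose image in each $X_i$ is $m x_i$. Then for every edge $e$ and every index $i$ with $e \in E_i$, the equation $c_e = m \cdot (\text{coefficient of }e\text{ in }x_i)$ holds.

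The crucial input is the very observation that powers the proof of \cref{injective_purity}: each label is a monomial in $t_1,\ldots,t_n$ lying in the maximal ideal, so every edge $e$ of $\Gamma$ survives in at least one $\Gamma_i$. Consequently $m \mid c_e$ for every edge $e$, and the element $\gamma' := \tfrac{1}{m}\gamma \in \Z^E$ is well-defined integrally. Since $\partial$ is $\Z$-linear and $\partial\gamma = 0$, also $\partial\gamma' = 0$, so $\gamma' \in X$. By construction $\gamma'$ projects to $x_i$ in each $\Z^{E_i}$, so $(x_1,\ldots,x_n)$ is the image of $\gamma'$, completing the argument.

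I do not anticipate a real obstacle here; the content is essentially the divisibility step $m \mid c_e$, which is forced by the fact that every edge is visible in some $\Gamma_i$. The only thing to state with care is the compatibility of the map $X \to X_i$ with the projection $\Z^E \to \Z^{E_i}$, and this is already built into diagram \eqref{diagram_bho}.
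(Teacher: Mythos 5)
Your proof is correct, and it rests on exactly the same two inputs as the paper's: that every edge of $\Gamma$ survives in at least one $\Gamma_i$ (so that the coefficient $c_e$ of each edge is pinned down by, and hence divisible together with, the coefficients of the $x_i$), and that the boundary map $\partial\colon \Z^E \to \Z^V$ has torsion-free target (so that $\partial\gamma'=0$ follows from $m\,\partial\gamma'=0$). The difference is one of presentation: the paper packages the same square of injections $H_1(\Gamma)\hookrightarrow\Z^E$, $\bigoplus H_1(\Gamma_i)\hookrightarrow\bigoplus\Z^{E_i}$ into a Snake Lemma argument (identifying $\ker(\coker\alpha\to\coker\beta)$ with a subgroup of the torsion-free $\coker\gamma\subseteq\Z^V$, and observing that $\coker\beta$ is itself torsion-free), whereas you carry out the corresponding element chase by hand. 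Your version is a bit more elementary and self-contained; the paper's is more compact and makes the diagrammatic structure explicit. Either is fine, and there is no gap in what you wrote.
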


\begin{proof}
We have a commutative diagram with injective arrows
\begin{center}
\begin{tikzcd}
H_1(\Gamma,\Z)\arrow[hookrightarrow, r, "\alpha"]\arrow[hookrightarrow, d, "\gamma"] & \bigoplus H_1(\Gamma_i,\Z) \arrow[hookrightarrow, d, "\delta"] \\
\Z^{E}\arrow[hookrightarrow, r, "\beta"] & \bigoplus \Z^{E_i} 
\end{tikzcd}
\end{center}
and we want to show that $\coker \alpha$ is torsion-free. Clearly $\coker \beta$ is torsion-free. Moreover, $\coker \gamma $ is contained in $\Z^V$ and is therefore torsion-free. By the Snake Lemma, the kernel of the induced map $\coker \alpha \ra \coker \beta$ is contained in $\coker \gamma$, hence is torsion-free. It follows that $\coker \alpha$ is torsion-free.
\end{proof}
\begin{corollary}
The purity map $X\ra X_1\oplus X_2\oplus\ldots\oplus X_n$ is an isomorphism if and only if the rank of $X$ is equal to the rank of $X_1\oplus\ldots\oplus X_n$, that is, if and only if $\mu=\mu_1+\ldots +\mu_n$.
\end{corollary}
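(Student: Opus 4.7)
The statement is a direct formal consequence of the two preceding lemmas (\cref{injective_purity} and \cref{tf_coker}), so my plan is simply to assemble them.

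The forward direction is immediate: an isomorphism of finitely generated free abelian groups forces equality of ranks, and by \cref{lemma_split_torus} the ranks of $X$ and $X_i$ are exactly $\mu$ and $\mu_i$ respectively, giving $\mu = \mu_1 + \ldots + \mu_n$.

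For the converse, I would assume $\mu = \mu_1 + \ldots + \mu_n$ and consider the short exact sequence
\beqs
0 \ra X \ra X_1 \oplus \ldots \oplus X_n \ra \coker \ra 0,
\eeqs
where injectivity of the first arrow is \cref{injective_purity}. Taking ranks shows that the cokernel has rank $0$. By \cref{tf_coker} the cokernel is also torsion-free, so a torsion-free finitely generated abelian group of rank $0$ must be trivial. Hence the purity map is surjective as well, and thus an isomorphism.

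No real obstacle is expected here; the work was done in proving injectivity and torsion-freeness of the cokernel, so this corollary is essentially a one-line bookkeeping step combining them.
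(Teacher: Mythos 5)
Your proof is correct and matches the paper's argument exactly: the forward direction follows from rank equality, and the converse combines injectivity (\cref{injective_purity}) with torsion-freeness of the cokernel (\cref{tf_coker}) to conclude the cokernel, being rank zero and torsion-free, vanishes.
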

\begin{proof}
Bijecitivity of the purity map implies equality of ranks. If, on the other hand, $\mu=\mu_1 +\ldots +\mu_n$, then the injective map $X\ra X_1\oplus\ldots\oplus X_n$ has a finite cokernel $F$. By \cref{tf_coker}, $F$ is also torsion-free, hence it is zero.
\end{proof}

\subsection{Definition of toric-additive curve}
\begin{definition}[Local definition of toric additivity]\label{def_ta_local}
We say that the nodal curve $\mathcal C/S$ is \textit{toric-additive} if the following equivalent conditions are satisfied:
\begin{itemize}
\item[i)] the purity map (\cref{character_map}) of character groups $$X\ra X_1\oplus X_2\oplus \ldots X_n$$ is an isomorphism;
\item[ii)] the purity map of homology groups 
$$H_1(\Gamma,\Z)\ra H_1(\Gamma_1,\Z)\oplus H_1(\Gamma_2,\Z)\oplus\ldots\oplus H_1(\Gamma_n,\Z)$$ is an isomorphism;
\item[iii)] $\mu=\mu_1+\mu_2+\ldots+\mu_n$;
\item[iv)] $h_1(\Gamma,\Z)=h_1(\Gamma_1,\Z)+h_1(\Gamma_2,\Z)+\ldots+h_1(\Gamma_n,\Z)$.

\end{itemize}
\end{definition}

\begin{remark}\label{remark_ta}
If $n=1$, then $\mathcal C/S$ is automatically toric-additive. Indeed, $\mu_1=\rk X_1\leq \rk X=\mu$ (by upper semi-continuity of the toric rank, or because the graph $\Gamma_1$ is a contraction of $\Gamma$), but at the same time \cref{ineq_toric} tells us $\mu\leq \mu_1$. 

In the trivial case $n=0$, $\mathcal C/S$ is smooth, and is toric-additive, as the character group $X$ is zero and the empty sum is zero.

\end{remark}
\begin{lemma}\label{ta_lemmino}
Suppose that $\mathcal C/S$ is toric-additive and that $t$ is a geometric point of $S$. Let $T=\Spec \O_{S,t}^{sh}$ be the strict henselization and $\mathcal C_T/T$ the base change. Then $\mathcal C_T/T$ is still toric-additive.
\end{lemma}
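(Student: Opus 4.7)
The plan is to identify the purity map of $\mathcal C_T/T$ at the closed point $t$ of $T$ with a ``restriction to the $I_{t'}$-summand'' of the purity map of $\mathcal C/S$ at $s$, and to conclude using the injectivity of purity maps from \cref{injective_purity}. Here $t'\in S$ denotes the image of $t$ and $I_{t'}:=\{i\,:\, t'\in D_i\}$. The pullback $D\times_S T$ is a normal crossing divisor in $T$ whose irreducible components are the $D_i\times_S T$ for $i\in I_{t'}$; write $\eta_i$ for their generic points, so that $\eta_i$ lies over $\zeta_i$. Since the character group of the jacobian of a nodal curve depends only on the geometric fibre, there are canonical identifications $X(t)\cong X(t')$ and $X(\eta_i)\cong X(\zeta_i)$, under which the purity map of $\mathcal C_T/T$ at $t$ becomes a map
\beqs
\alpha_{t'}\colon X(t')\ra \bigoplus_{i\in I_{t'}} X(\zeta_i).
\eeqs

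The core of the argument is then a transitivity step. The semi-abelian structure on $\Pic^0_{\mathcal C/S}$ supplies a surjective generization map $X(s)\twoheadrightarrow X(t')$, and for each $i\in I_{t'}$ the direct generization $X(s)\to X(\zeta_i)$ factors through $X(t')$, since these maps arise from composing contractions of labelled dual graphs. Summing over $i\in I_{t'}$, the composition
\beqs
X(s)\twoheadrightarrow X(t')\xrightarrow{\alpha_{t'}} \bigoplus_{i\in I_{t'}} X(\zeta_i)
\eeqs
coincides with the purity map at $s$ post-composed with the projection to the $I_{t'}$-summand. Since the purity map at $s$ is an isomorphism by the toric additivity hypothesis and the projection is surjective, the displayed composition is surjective, and hence so is $\alpha_{t'}$. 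By \cref{injective_purity} applied to $\mathcal C_T/T$ it is also injective, hence an isomorphism, and $\mathcal C_T/T$ is toric-additive by \cref{def_ta_local}.

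The step I expect to be the main obstacle is the very first one: verifying that the purity map of $\mathcal C_T/T$ at $t$ is indeed identified with $\alpha_{t'}$, and that the generization map $X(s)\to X(t')$ fits into the expected transitive composition with the maps into each $X(\zeta_i)$. The construction of generization maps in the paper (around \cref{contraction_graph}) is formulated only for the generic points of regular closed subschemes of a strictly local base, and an arbitrary $t'\in S$ need not be of this form; however, both compatibilities should follow by inspecting the underlying split-subtorus construction and observing that strict henselization and base change along regular closed immersions preserve the relevant structures, after which the remainder of the argument is purely formal.
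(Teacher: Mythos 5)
Your proof is correct and follows essentially the same route as the paper's: both factor the generization map $X(s)\to X(\zeta_i)$ (for $i\in I_{t'}$) through $X(t')$, deduce surjectivity of the purity map at $t$ from the toric additivity hypothesis at $s$, and combine this with the injectivity of purity maps (\cref{injective_purity}) to conclude. The one presentational difference is that the paper keeps the full target $\bigoplus_{i=1}^n X_i$ and factors the purity map as (generization)$\,\circ\,$(purity at $t$ $\oplus$ identities), whereas you project to the $I_{t'}$-summand at the outset; the underlying transitivity-of-generization input — which you rightly flag as the point not spelled out for general $t'$ — is used, and left similarly unverified, in both.
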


\begin{proof}
Without loss of generality, we assume that $t$ is a geometric point of $D_1,D_2,\ldots,D_m$, for some integer $0\leq m\leq n$. We let $\Gamma'$ be the graph of the fibre $\mathcal C_t$ and $X'$ the associated group of characters. We obtain a purity map $X'\ra X_1\oplus\ldots\oplus X_m$. The isomorphism $X\ra X_1\oplus\ldots\oplus X_n$ factors as
$$X\ra X'\oplus X_{m+1}\oplus X_{m+2}\oplus\ldots\oplus X_n\ra X_1\oplus X_2\oplus\ldots\oplus X_n$$
where the first arrow is a sum of generization maps, and the second arrow is the direct sum of the purity map $X'\ra X_1\oplus X_2\oplus\ldots\oplus X_m$ and of the identity morphisms $X_i\ra X_i$, $m+1\leq i\leq n$. 
Hence the second arrow is both injective and surjective, and it follows that $X'\ra X_1\oplus \ldots\oplus X_m$ is an isomorphism.

\end{proof}

We now drop the hypothesis that $S$ is strictly local, and give a global definition of toric additivity. Let $S$ be regular, with a normal crossing divisor $D\subset S$ and let $\mathcal C/S$ be a nodal curve, smooth over $U=S\setminus D$.

\begin{definition}[Global definition of toric additivity]
We say that $\mathcal C/S$ is toric-additive at a geometric point $s$ of $S$ if the base change to the strict henselization $\Spec \O^{sh}_{S,s}$ is toric-additive as in \cref{def_ta_local}.

We say that $\mathcal C/S$ is toric-additive if it is toric-additive at all geometric points $s$ of $S$. In view of \cref{ta_lemmino}, this definition is consistent with \cref{def_ta_local}.
\end{definition}

\subsection{Base change properties}

The functoriality of $\Pic^0_{\mathcal C/S}$ allows us to show that toric additivity behaves well with respect to a quite general class of base change.

\begin{lemma}\label{lemma_bc}
Let $S$ be a regular scheme, $D\subset S$ a normal crossing divisor and $\mathcal C/S$ a nodal curve smooth over $U=S\setminus D$. Let $T$ be another regular scheme, and $f\colon T\ra S$ a morphism such that $E:=(D\times_ST)_{red}\subset T$ is a normal crossing divisor. Let $t$ be a geometric point of $T$, lying over some geometric point $s$ of $S$, and let $f'\colon T'\ra S'$ be the morphism between the strict henselizations. Suppose that $f'$ satisfies the following assumption:

$(\star)$ For every irreducible component $E_i$ of $E\cap T'$, the image via $f'$ is a irreducible component $D_j$ of $D\cap S'$, and the induced function $\{E_i\}_i\ra \{D_j\}_j$ is a bijection between the set of irreducible components of $E\cap T'$ and the set of irreducible components of $D\cap S'$.

Then $\mathcal C/S$ is toric-additive at $s$ if and only if the base-change $\mathcal C_T/T$ is toric-additive at $t$.
\end{lemma}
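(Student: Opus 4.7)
The plan is to reduce the question to the strictly henselian local setting, and then to observe that the combinatorial data used to define toric-additivity on both sides are canonically identified by the base change, so that the two purity maps become literally the same map.

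First, by the global definition of toric-additivity together with \cref{ta_lemmino}, the property of being toric-additive at $s$ (resp.\ $t$) depends only on the base change of $\mathcal C/S$ (resp.\ $\mathcal C_T/T$) along $S' \to S$ (resp.\ $T' \to T$). Hence I may assume that $S$ and $T$ are both regular strictly local schemes and $f$ satisfies hypothesis $(\star)$. After reordering components, $f$ sends $E_i$ onto $D_i$ for each $i=1,\dots,n$; since $f\vert_{E_i}$ is dominant, the generic point $\eta'_i$ of $E_i$ maps to the generic point $\eta_i$ of $D_i$, and the closed points correspond as well. In particular the relevant fibres satisfy $\mathcal C_{T,t} = \mathcal C_s \times_{\kappa(s)} \kappa(t)$ and $\mathcal C_{T,\eta'_i} = \mathcal C_{\eta_i} \times_{\kappa(\eta_i)} \kappa(\eta'_i)$.

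Next I would establish canonical isomorphisms $X \cong X'$ and $X_i \cong X'_i$, where primes denote the $T$-side. By \cref{split_sing_sh}, both $\mathcal C_s$ and $\mathcal C_{\eta_i}$ have split singularities. The remark that split-singularity nodal curves have geometrically irreducible components (combined with the fact that the nodes are rational points that remain distinct under field extension) implies that any extension of the base field preserves the dual graph. Thus the dual graph of $\mathcal C_s$ is canonically identified with that of $\mathcal C_t$, and similarly for $\mathcal C_{\eta_i}$ and $\mathcal C_{\eta'_i}$, giving the desired identifications of character groups via \cref{lemma_split_torus}. The final ingredient is a naturality check: the generization map~(\ref{specialization_tori}) on the $S$-side is built from the edge inclusion $E_{\eta_i}\hookrightarrow E_s$ and the vertex contraction $V_s \twoheadrightarrow V_{\eta_i}$ of \cref{contraction_graph}, and this combinatorial package is respected by the field-extension isomorphisms just constructed. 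This yields a commutative square
\[
\begin{tikzcd}
X \arrow[r] \arrow[d,"\cong"'] & X_1 \oplus \cdots \oplus X_n \arrow[d,"\cong"] \\
X' \arrow[r] & X'_1 \oplus \cdots \oplus X'_n,
\end{tikzcd}
\]
whose horizontal arrows are the purity maps of \cref{character_map}. Since the vertical maps are isomorphisms, the top arrow is an isomorphism if and only if the bottom one is, which by \cref{def_ta_local} is exactly the equivalence sought.

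The main obstacle I expect is the naturality check: one must verify that the vertex-contraction datum $V_s \twoheadrightarrow V_{\eta_i}$ in $S$ really does correspond under the base change to the analogous contraction $V_t \twoheadrightarrow V_{\eta'_i}$ in $T$, which is essentially a diagram chase through the construction of the generization map on $\O^{\times}$ used in the proof of \cref{lemma_split_torus}. A mildly awkward point is that $\kappa(\eta'_i)/\kappa(\eta_i)$ need not be separable — indeed $f$ may ramify along $D_i$ — but since the dual graph is purely combinatorial and is untouched by any base field extension that preserves split singularities, this ramification causes no genuine difficulty.
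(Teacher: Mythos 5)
Your proof is correct, and it reaches the same conclusion as the paper, but the paper takes a shorter route. After the reduction to the bijection of generic points, the paper simply invokes condition~(iii) of \cref{def_ta_local} together with the fact (stated right after the exact sequence~(\ref{exseq_Pic})) that the toric rank is stable under base field extension: since $\mathcal C_{T',t}$ is the base change of $\mathcal C_{S',s}$ along $\kappa(s)\to\kappa(t)$ and likewise $\mathcal C_{T',\xi_i}$ is the base change of $\mathcal C_{S',\zeta_i}$ along $\kappa(\zeta_i)\to\kappa(\xi_i)$, one gets $\mu(t)=\mu(s)$ and $\mu(\xi_i)=\mu(\zeta_i)$, and the numerical criterion $\mu=\sum\mu_i$ transfers immediately. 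You instead verify condition~(i) by building a canonical isomorphism of dual graphs and character groups and checking naturality of the purity maps, which is a strictly stronger statement and hence carries more to verify (the commutativity of the square, and the preservation of the contraction pattern under $f'$). Both approaches are sound; yours establishes the finer fact that the entire purity map, not just its bijectivity, is respected by such base changes, at the cost of the ``naturality check'' you flag. One small point worth making explicit in your write-up: the reason the contraction data match is that $(\star)$ forces $f'^{\,-1}(D_j)_{\mathrm{red}}=E_i$, so a label $t_1^{a_1}\cdots t_n^{a_n}$ pulls back to $u_1^{a_1c_1}\cdots u_n^{a_nc_n}$ up to a unit with each $c_i\ge 1$, and hence an edge is contracted in $\Gamma_{\xi_i}$ exactly when it is contracted in $\Gamma_{\zeta_i}$; your sketch gestures at this but does not spell it out.
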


\begin{proof}
The morphism $f'\colon T'\ra S'$ induces a bijection between the generic points $\xi_1,\ldots,\xi_n$ of $E\cap T'$ and the generic points $\zeta_1,\ldots,\zeta_n$ of $D\cap S'$. Clearly $\mu(t)=\mu(s)$ and $\mu(\xi_i)=\mu(\zeta_i)$, and the statement follows.
\end{proof}

\begin{corollary}\label{TA_smooth}
Toric additivity is local on the target for the smooth topology.
\end{corollary}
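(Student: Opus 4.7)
The plan is to deduce the corollary directly from Lemma~\ref{lemma_bc}, by verifying that any smooth surjective cover $f\colon T\ra S$ satisfies both the global hypotheses (that $E=(D\times_S T)_{\text{red}}$ is a normal crossing divisor in $T$) and the local hypothesis $(\star)$ at every pair of geometric points $t\mapsto s$.

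First I would unpack what ``local on the target for the smooth topology'' means here: given a smooth surjective $f\colon T\ra S$, I need to show that $\mathcal C/S$ is toric-additive if and only if $\mathcal C_T/T$ is. Once $(\star)$ is established, Lemma~\ref{lemma_bc} gives the equivalence pointwise: for each geometric point $s$ of $S$, surjectivity of $f$ supplies a geometric point $t$ of $T$ over $s$, and toric additivity of $\mathcal C/S$ at $s$ is equivalent to toric additivity of $\mathcal C_T/T$ at $t$; since the global definition is pointwise, the two properties coincide.

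The content is therefore concentrated in checking the two hypotheses for smooth maps. For the first, that $E\subset T$ is a normal crossing divisor, the key fact is that a smooth morphism is flat with smooth (hence reduced) fibres, so $D\times_S T$ is already reduced and transverse pullback of a simple-normal-crossing configuration of smooth divisors by a smooth morphism remains a normal crossing divisor; in particular $E=D\times_S T$ and its components are the pullbacks of the components of $D$.

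For the second, the map of strict henselizations $f'\colon T'=\Spec \O_{T,t}^{sh}\ra S'=\Spec \O_{S,s}^{sh}$ is essentially smooth, so if $t_1,\ldots,t_n$ is a regular system of parameters of $\O_{S,s}^{sh}$ cutting out $D_1,\ldots,D_n$ at $s$, their pullbacks $f'^{*}t_1,\ldots,f'^{*}t_n$ extend to a regular system of parameters of $\O_{T,t}^{sh}$ (the relative cotangent space of a smooth map is free, and the map on absolute cotangent spaces is injective on this subspace). Consequently each $f'^{*}t_i$ generates a prime ideal of height one, and the components of $E\cap T'$ are precisely $V(f'^{*}t_1),\ldots,V(f'^{*}t_n)$, giving the bijection required by $(\star)$.

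The main obstacle, if any, is being careful about reducedness and the bijection on components in $(\star)$: one must rule out the possibility that a single $D_i\cap S'$ could pull back to two or more components in $T'$. But because $\O_{S,s}^{sh}\ra \O_{T,t}^{sh}$ is essentially smooth of some relative dimension $d$ with strictly henselian fibre (the fibre ring is the strict henselisation of a polynomial ring over the residue field at a rational point, hence a domain), the ideal $(f'^{*}t_i)$ remains prime, and no splitting occurs. With this verified, Lemma~\ref{lemma_bc} applies and the corollary follows.
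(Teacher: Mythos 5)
Your proposal is correct and follows essentially the same route as the paper: both reduce the corollary to Lemma~\ref{lemma_bc} and then verify hypothesis $(\star)$ by showing that each component $D_i\cap S'$ pulls back to a single irreducible component of $E\cap T'$. The paper proves this by establishing that $D_i\times_{S'}T'$ is regular (via a flatness and dimension-counting argument following Liu) and then connected, whereas you observe more directly that the $f'^{*}t_i$ remain part of a regular system of parameters of $\O_{T,t}^{sh}$, so each $(f'^{*}t_i)$ is prime --- a slightly more streamlined phrasing of the same underlying regularity argument. (One small caveat: the parenthetical justification in your last paragraph, that primality follows because the fibre ring is a domain, is not by itself sufficient --- flatness over a domain with domain fibre does not imply the total ring is a domain; what makes it work is that the fibre ring is \emph{regular}, which is exactly what your earlier regular-parameters argument already uses.)
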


\begin{proof}
Let $f'\colon T'\ra S'$ be the induced morphism between strict henselizations and let $D_i$ be any component of $D\times_SS'$. The induced morphism from $E_i:=D_i\times_{S'}T'$ to $D_i$ is flat and formally smooth. Moreover, $D_i$ is regular. Now we follow the proof of \citep[theorem 4.3.36]{Liu} to prove that $E_i$ is also regular: let $x\in E_i$ and $y=f'(x)$. Let $m=\dim \O_{E_i,x}$ and $n=\dim \O_{D_i,y}$. By flatness we can apply \citep[theorem 4.3.12]{Liu} and find that $\dim \O_{(E_i)_y,x}=m-n$. By \citep[1, 19.6.5]{EGA4}, $(E_i)_y$ is regular at $y$, hence $\O_{(E_i)_y,x}$ is generated by $m-n$ elements $b'_{n+1},\ldots,b'_m$. Each of these is the image of an element $b_i$ in the maximal ideal of $\O_{E_i,x}$. If we let $a_1,\ldots,a_m$ be generators of the maximal ideal of $\O_{D_i,y}$, the $m$ elements $a_1,\ldots,a_n,b_{n+1},\ldots,b_m$ generate $\O_{E_i,x}$, which is therefore regular. 

Finally, as $E_i$ is regular and connected (every connected component of $E_i$ contains the closed point of $T'$), it is irreducible. This shows that the function from the set of components of $D\times_{S'}T'$ to the set of components of $D$ is a bijection and we apply \cref{lemma_bc}.
\end{proof}

\begin{example}
Let $R$ be a regular, noetherian local ring, with a system of regular parameters $t_1,t_2,\ldots,t_n\in R$. Let $S=\Spec R$ and $D=\{t_1\cdot t_2\cdot\ldots\cdot t_n=0\}$. Let $\mathcal C/S$ be a nodal curve, smooth over $U=S\setminus D$. 

Let $a_1,a_2,\ldots,a_n>0$ be integers and let $T$ be the spectrum of the finite locally-free $R$-algebra $$\frac{R[u_1,\ldots,u_n]}{u_1^{a_1}-t_1,\ldots,u_n^{a_n}-t_n}$$

Then $T$ is a regular scheme, $E=\{u_1\cdot u_2\cdot\ldots\cdot u_n\}$ is a normal crossing divisor, and the components of $E$ are in bijection with those of $D$.

By \cref{lemma_bc}, $\mathcal C/S$ is toric-additive if and only if $\mathcal C_T/T$ is toric-additive
\end{example}

\begin{lemma}\label{ta_open}
Toric additivity is an open condition on $S$.
\end{lemma}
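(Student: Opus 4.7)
The plan is to reduce openness to a combinatorial statement about the dual graph at $s$, verify it via a diagram chase, and then conclude by the smooth-local nature of toric additivity.

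Fix a geometric point $s$ at which $\mathcal C/S$ is toric-additive. Since \cref{TA_smooth} shows toric additivity is smooth-local on the target and étale morphisms are open, it suffices to find an étale neighborhood $f\colon S'\to S$ of $s$ on which toric additivity holds at \emph{every} point: the image $f(S')$ will then be the desired Zariski-open neighborhood. I would shrink $S'$ so that $D\cap S'$ decomposes into smooth irreducible components $D_1,\ldots,D_n$ with local equations $t_1,\ldots,t_n$, and so that $\mathcal C_s$ attains split singularities; the labelled graph $\Gamma=\Gamma_s$ of \cref{def_labelled_graph} is then defined, with each edge $e$ carrying a label of the form $(t_1^{a_1(e)}\cdots t_n^{a_n(e)})$.

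For any point $t\in S'$, set $I(t):=\{i : t\in D_i\}$. Using \cref{contraction_graph} together with \cref{lemma_split_torus}, I would identify the dual graph of $\mathcal C_{\bar t}$ with the contraction $\Gamma_{I(t)}$ of $\Gamma$ obtained by collapsing exactly those edges $e$ whose thickness becomes a unit at $t$, namely the edges with $a_i(e)=0$ for every $i\in I(t)$. Hence $\mu(t)=h_1(\Gamma_{I(t)},\Z)$ and, in particular, $\mu(\zeta_i)=h_1(\Gamma_{\{i\}},\Z)$. Toric additivity at $t$ therefore reduces to the purely combinatorial identity $h_1(\Gamma_{I(t)})=\sum_{i\in I(t)} h_1(\Gamma_{\{i\}})$, which depends only on $I(t)\subseteq\{1,\ldots,n\}$.

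The heart of the argument is to prove: if the purity map $\phi\colon H_1(\Gamma)\to\bigoplus_{j=1}^n H_1(\Gamma_{\{j\}})$ is an isomorphism (toric additivity at $s$), then for every subset $I\subseteq\{1,\ldots,n\}$ the purity map $\phi_I\colon H_1(\Gamma_I)\to\bigoplus_{i\in I} H_1(\Gamma_{\{i\}})$ is also an isomorphism. I would fit $\phi$ and $\phi_I$ into a commutative square whose left vertical arrow is the surjection $H_1(\Gamma)\twoheadrightarrow H_1(\Gamma_I)$ induced by contraction and whose right vertical arrow is the projection onto the factors indexed by $I$. Commutativity is immediate since both generization maps act by restricting a cycle to the appropriate edge subset, and $E_{\{i\}}\subseteq E_I$ for $i\in I$. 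The top map being an isomorphism and both vertical maps being surjective forces $\phi_I$ to be surjective; injectivity of $\phi_I$ follows verbatim from the proof of \cref{injective_purity}, since every edge of $\Gamma_I$ has support meeting $I$ and thus survives in some $\Gamma_{\{i\}}$ with $i\in I$. Hence $\phi_I$ is an isomorphism, and toric additivity holds at every $t\in S'$.

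The step I expect to require the most care is the identification of $\Gamma_{I(t)}$ with the dual graph of $\mathcal C_{\bar t}$ when $t$ is a non-closed point of $S'$: \cref{contraction_graph} is formulated only for generic points of regular closed subschemes. I would handle this either by passing to the strict henselization of $S'$ at $t$ and reading off the contraction directly from \cref{thicknessZariski} and \cref{split_sing_sh}, or by applying \cref{contraction_graph} iteratively along a chain of regular strata of $D$ joining $s$ to $t$. Everything else is a routine diagram chase and a standard openness-of-étale-maps argument.
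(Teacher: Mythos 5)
Your proposal is correct and uses the same high-level strategy as the paper: reduce via \cref{TA_smooth} to finding an \'etale neighbourhood of $s$ on which toric additivity holds at every point, and then verify this combinatorially. Where you differ is in how the combinatorial heart is executed. The paper chooses an intermediate point $\zeta$, the generic point of $D_1\cap\ldots\cap D_m$, deduces toric additivity at $\zeta$ from \cref{ta_lemmino} (which is morally the factorization
\[
X\longrightarrow X'\oplus X_{m+1}\oplus\cdots\oplus X_n \longrightarrow X_1\oplus\cdots\oplus X_n
\]
of the purity map), and then observes that the generization map $H_1(\Gamma_t,\Z)\to H_1(\Gamma_\zeta,\Z)$ is an isomorphism because no edge of $\Gamma_t$ is contracted in $\Gamma_\zeta$. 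You instead isolate and prove the purely combinatorial statement directly --- if $\phi\colon H_1(\Gamma)\to\bigoplus_j H_1(\Gamma_{\{j\}})$ is an isomorphism, then so is $\phi_I$ for every $I\subseteq\{1,\ldots,n\}$ --- by a commutative square in which surjectivity of $\phi_I$ follows from surjectivity of $\phi$ and of the projection on the right, and injectivity follows exactly as in \cref{injective_purity}. Both arguments are sound; your version makes the combinatorics more transparent, while the paper's route through $\zeta$ has the advantage of sidestepping the technical point you correctly flagged (applying \cref{contraction_graph} at an arbitrary point $t$): $\zeta$ is the generic point of a regular closed subscheme, so \cref{contraction_graph} applies verbatim, and the passage from $t$ to $\zeta$ is a graph isomorphism. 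You would need to fill in that flagged identification of $\Gamma_{\bar t}$ with $\Gamma_{I(t)}$ either as you suggest (via strict henselization at $t$ and \cref{thicknessZariski}) or by adopting the paper's $\zeta$ trick; one small omission worth making explicit is that the \'etale neighbourhood should be shrunk so that $s$ lies on \emph{all} irreducible components of $D\cap S'$, as the paper does.
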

\begin{proof}

Suppose that $\mathcal C/S$ is toric-additive at a geometric point $s$. By \cref{TA_smooth} it is enough to show that $\mathcal C/S$ is toric-additive on an \'etale neighbourhood of $s$, since \'etale morphisms are open. We choose an \'etale neighbourhood of finite type $W\ra S$ of $s$ such that $D_W=D\times_SW$ is a strict normal crossing divisor and such that $s$ belongs to all irreducible components $D_1,\ldots, D_n$ of $D_W$. 

Let $t$ be another geometric point of $W$; we want to show that $\mathcal C_W/W$ is toric-additive at $t$. This is true if $t\not\in D_W$, so we may assume without loss of generality that $t$ belongs to $D_1,\ldots,D_m$ for some $1\leq m\leq n$. Let $\zeta$ be a geometric point lying over the generic point of $D_1\cap D_2\cap \ldots\cap D_m$; write $W_{\zeta},W_t,W_s$ for the spectra of the strict henselizations of $W$ at $\zeta,t,s$ respectively. The morphism $W_{\zeta}\ra W$ factors via $W_s$; hence, by \cref{ta_lemmino}, $\mathcal C_W/W$ is toric-additive at $\zeta$. We also have a natural map $W_{\zeta}\ra W_t$. 

Now, we look at the graph $\Gamma_t$ of $\mathcal C_t$: none of its edges are contracted in the graph $\Gamma_{\zeta}$ of $\mathcal C_{\zeta}$. Hence the generization map $H_1(\Gamma_t,\Z)\ra H_1(\Gamma_{\zeta},\Z)$ is an isomorphism. Composing it with the purity map $H_1(\Gamma_{\zeta},\Z)\ra \bigoplus_{i=1}^mH_1(\Gamma_i,\Z)$, we find the purity map for $\mathcal C_t$. As $\mathcal C_{W_{\zeta}}/W_{\zeta}$ is toric-additive, this latter purity map is an isomorphism, hence $\mathcal C_{W_t}/W_t$ is toric-additive, which completes the proof.
\end{proof}

\begin{lemma}\label{TA_blowups}
Let $\pi\colon \mathcal C\ra \mathcal C'$ be a proper birational morphism between nodal curves $\mathcal C/S$ and $\mathcal C'/S$ smooth over $U$. Then $\mathcal C/S$ is toric-additive if and only if $\mathcal C'$ is.  
\end{lemma}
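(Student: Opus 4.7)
My strategy is to prove that the pullback $\pi^{*}\colon \Pic^0_{\mathcal C'/S}\to \Pic^0_{\mathcal C/S}$ is an isomorphism of $S$-group schemes. Once this is known, the lemma is immediate: the data from which toric additivity is built in \cref{def_ta_local} --- namely the maximal subtori of the geometric fibres of $\Pic^0_{\mathcal C/S}$, their character groups, and the generization maps \ref{specialization_tori} --- are all intrinsic to the semi-abelian $S$-scheme $\Pic^0_{\mathcal C/S}$, so an isomorphism $\pi^{*}$ transports the purity map of \cref{character_map} for $\mathcal C/S$ onto that for $\mathcal C'/S$ and conditions (i)--(iv) of \cref{def_ta_local} hold on one side if and only if they hold on the other.

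The first step is to verify that $\pi^{*}$ is an isomorphism over $U$. The restriction $\pi_U\colon \mathcal C_U\to \mathcal C'_U$ is proper and birational, and the target $\mathcal C'_U$ is smooth over the regular scheme $U$, hence normal. Zariski's Main Theorem then forces $\pi_U$ to be an isomorphism of $U$-schemes, and pulling back line bundles gives that $\pi_U^{*}\colon \Pic^0_{\mathcal C'_U/U}\to \Pic^0_{\mathcal C_U/U}$ is an isomorphism of abelian schemes.

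The main step is to propagate this from $U$ to all of $S$. Both $\Pic^0_{\mathcal C/S}$ and $\Pic^0_{\mathcal C'/S}$ are semi-abelian $S$-group schemes by \citep[9.4/1]{BLR}, and both extend the common abelian scheme $\Pic^0_{\mathcal C_U/U}$ over the schematically dense open $U\subset S$. The plan here is to invoke the standard rigidity property that a homomorphism between semi-abelian $S$-schemes whose restriction over a schematically dense open is an isomorphism is itself an isomorphism; concretely, surjectivity of $\pi^{*}$ follows from the scheme-theoretic image of $\pi^{*}$ being a closed subgroup scheme of $\Pic^0_{\mathcal C/S}$ that contains the schematically dense open $\pi_U^{*}(\Pic^0_{\mathcal C'_U/U})$, and injectivity follows from the fact that $\ker \pi^{*}$ is a closed $S$-flat subgroup scheme (the kernel of a morphism of smooth group schemes of the same fibre dimension) which is trivial over the dense open $U$. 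Both of these can be extracted from the prolongation results for semi-abelian schemes in \citep{faltings1990degeneration}, Chapter I. I expect this rigidity step to be the only non-formal part of the argument, as the base $S$ has no restriction on its dimension; the remainder of the proof is a direct unwinding of the definitions.
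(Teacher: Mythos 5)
Your approach is essentially the same as the paper's: both reduce the lemma to the claim that $\pi^{*}\colon\Pic^0_{\mathcal C'/S}\to\Pic^0_{\mathcal C/S}$ is an isomorphism, and both deduce this from the fact that $\pi_U$ is an isomorphism together with the uniqueness/rigidity of semi-abelian prolongations over a normal base --- the paper cites Deligne, \emph{Le lemme de Gabber}, th\'eor\`eme p.~132, while your citation to Faltings--Chai, Chapter~I plays the identical role. One caveat on your inline sketch of the rigidity step: the parenthetical justification that $\ker\pi^{*}$ is $S$-flat ``because it is the kernel of a morphism of smooth group schemes of the same fibre dimension'' is not correct in general (over a discrete valuation ring with uniformizer $t$, the map $\mathbb G_a\to\mathbb G_a$, $x\mapsto tx$, has non-flat kernel); this does not affect the argument since the cited prolongation theorem supplies both injectivity and surjectivity directly, but the parenthetical should be dropped rather than used as a standalone proof.
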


\begin{proof}
As $\mathcal C'_U/U$ is smooth, the restriction of $\pi$ to $U$ is an isomorphism.
By definition, toric additivity of $\mathcal C/S$ depends only on the semi-abelian scheme $\Pic^0_{\mathcal C/S}$. By \citep{deligne}, Th\'eor\`eme pag.132, semi-abelian extensions of $\Pic^0_{\mathcal C_U/U}$ are unique up to unique isomorphism. Hence the morphism $\Pic^0_{\mathcal C'/S}\ra \Pic^0_{\mathcal C/S}$ induced by $\pi$ is an isomorphism, which completes the proof.
\end{proof}

\section{N\'eron models}\label{sectionNM}
\subsection{The definition of N\'eron model}
Let $S$ be any scheme, $U\subset S$ an open and $A/U$ an abelian scheme. 

\begin{definition}\label{defn_NM}
A \textit{N\'eron model} for $A$ over $S$ is a smooth, separated algebraic space \footnote{defined as in \citep{stacks}\href{http://stacks.math.columbia.edu/tag/025Y}{TAG 025Y}.)} $\mathcal N/S$ of finite type, together with an isomorphism $\mathcal N\times_SU\rightarrow A$, satisfying the following universal property: for every smooth morphism of schemes $T\ra S$ and $U$-morphism $f\colon T\times_SU\ra A$, there exists a unique morphism $g\colon T\ra \mathcal N$ such that $g_{|U}=f.$
\end{definition}

It follows immediately from the definition that a N\'eron model is unique up to unique isomorphism; moreover, applying its defining universal property to the morphisms $m\colon A\times_U A\ra A, i\colon A\ra A,$ and $0_A\colon U\ra A$ defining the group structure of $A$, we see that $\mathcal N/S$ inherits from $A$ a unique $S$-group-space structure.

\subsection{Base change properties}
We proceed to analyse how N\'eron models behave under different types of base change. In general, the property of being a N\'eron model is not stable under arbitrary base change. However, we have that:
\begin{lemma}\label{smooth_base_change}
Let $\mathcal N/S$ be a N\'eron model for $A/U$; let $S'\ra S$ be a smooth morphism and $U'=U\times_SS'$. Then the base change $\mathcal N\times_SS'$ is a N\'eron model of $A_{U'}$.
\end{lemma}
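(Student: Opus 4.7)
The plan is to verify the three requirements of \cref{defn_NM} for $\mathcal N':=\mathcal N\times_S S'$ as an $S'$-space, with the universal property being the only non-trivial point.

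First, since smoothness, separatedness, and finite type are preserved under arbitrary base change, $\mathcal N'/S'$ is a smooth, separated algebraic space of finite type. The identification $\mathcal N'\times_{S'}U'\cong A_{U'}$ follows by rewriting $\mathcal N'\times_{S'}U'\cong \mathcal N\times_S S'\times_{S'}U'\cong \mathcal N\times_S U'\cong (\mathcal N\times_S U)\times_U U'\cong A\times_U U'=A_{U'}$, using that $U'=U\times_S S'$.

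For the universal property, I would take a smooth $S'$-scheme $T\to S'$ together with a $U'$-morphism $f\colon T\times_{S'}U'\to A_{U'}$, and produce a unique lift $T\to \mathcal N'$ extending $f$. The key observations are: (a) the composite $T\to S'\to S$ is smooth, since smoothness is stable under composition; and (b) there is a canonical identification $T\times_{S'}U'=T\times_S U$, obtained from $U'=U\times_S S'$ and associativity of fibre products. Composing $f$ with the projection $A_{U'}=A\times_U U'\to A$, one obtains a $U$-morphism $\tilde f\colon T\times_S U\to A$, and the Néron property of $\mathcal N/S$ applied to the smooth $S$-scheme $T$ produces a unique $S$-morphism $g\colon T\to \mathcal N$ with $g_{|T\times_S U}=\tilde f$. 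Pairing $g$ with the structural morphism $T\to S'$ then defines, by the universal property of the fibre product, a morphism $g'\colon T\to \mathcal N\times_S S'=\mathcal N'$ over $S'$, whose restriction to $T\times_{S'}U'$ equals $f$ by construction.

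For uniqueness, any $S'$-morphism $h'\colon T\to \mathcal N'$ extending $f$ induces by composition with $\mathcal N'\to \mathcal N$ an $S$-morphism $T\to \mathcal N$ extending $\tilde f$; by the Néron property of $\mathcal N/S$, this composite must equal $g$, and then the compatibility with the projection $\mathcal N'\to S'$ forces $h'=g'$. The main obstacle—if one can call it an obstacle—is simply keeping the fibre product identifications straight and verifying that the Néron extension over $S$ automatically lifts to one over $S'$; once $T\to S'\to S$ is recognised as smooth, everything follows formally.
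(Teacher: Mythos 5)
Your proof is correct and follows essentially the same route as the paper: reduce to the Néron property of $\mathcal N/S$ by composing the smooth morphism $T\to S'$ with $S'\to S$, then recover the $S'$-morphism to $\mathcal N\times_S S'$ via the universal property of the fibre product. You merely spell out the fibre-product identifications and the uniqueness argument more explicitly than the paper's terse version.
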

\begin{proof}
Let $X\ra S'$ be a smooth scheme with a morphism $f\colon X_{U'}\ra \mathcal A_{U'}$; by composition with the smooth morphism $S'\ra S$ we obtain a smooth scheme $X\ra S$ and a map $X\times_SU\ra A_U$, which extends uniquely to an $S$-morphism $X\ra \mathcal N$. This is the datum of an $S'$-morphism $X\ra \mathcal N\times_SS'$ extending $f$.
\end{proof}

\begin{lemma}\label{fpqclocal}
Let $\mathcal N/S$ be a smooth, separated algebraic space of finite type with an isomorphism $\mathcal N\times_SU\ra A$. Let $S'\ra S$ be a faithfully flat morphism and write $U'=U\times_SS'$. If $\mathcal N\times_SS'$ is a N\'eron model of $A\times_UU'$, then $\mathcal N/S$ is a N\'eron model of $A$.
\end{lemma}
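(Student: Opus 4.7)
The plan is to verify the N\'eron universal property of $\mathcal N/S$ by producing an extension over the faithfully flat cover $S'\to S$ and then descending it. Fix a smooth $S$-scheme $T$ and a $U$-morphism $f\colon T_U\to A$; I must produce a unique $S$-morphism $g\colon T\to\mathcal N$ restricting to $f$.

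First I would base-change to $S'$: the scheme $T_{S'}:=T\times_SS'$ is smooth over $S'$, and $f$ yields a $U'$-morphism $f_{S'}\colon T_{U'}\to A\times_UU'$. By hypothesis, $\mathcal N\times_SS'$ is a N\'eron model of $A\times_UU'$ over $S'$, so there is a unique $S'$-morphism $g'\colon T_{S'}\to\mathcal N\times_SS'$ extending $f_{S'}$. Composing $g'$ with the projection $\mathcal N\times_SS'\to\mathcal N$ yields a morphism $\phi\colon T_{S'}\to\mathcal N$.

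The main step is the descent cocycle condition for $\phi$ along the fpqc cover $T_{S'}\to T$ (the base change of $S'\to S$). Writing $S''=S'\times_SS'$ and $T_{S''}=T\times_SS''$, with the two projections $q_1,q_2\colon T_{S''}\to T_{S'}$, I must show $\phi\circ q_1=\phi\circ q_2$ as morphisms $T_{S''}\to\mathcal N$. Both compositions restrict on $T_{U''}:=T\times_SU''$, where $U'':=U\times_SS''$, to the same morphism $T_{U''}\to A\hookrightarrow \mathcal N$ induced by $f$. Because $T_{S''}\to S''$ is flat and $U''\subseteq S''$ is schematically dense (it is the pullback of the dense open $U\subseteq S$ along the flat morphism $S''\to S$), $T_{U''}$ is schematically dense in $T_{S''}$. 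Separatedness of $\mathcal N/S$ then implies that the equalizer of $\phi\circ q_1$ and $\phi\circ q_2$ is a closed subspace of $T_{S''}$ containing the dense $T_{U''}$, hence equals $T_{S''}$. By fpqc descent of morphisms into the algebraic space $\mathcal N$, $\phi$ descends to a morphism $g\colon T\to\mathcal N$, whose restriction to $T_U$ is $f$ by construction.

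Uniqueness of $g$ is straightforward: any two extensions $g_1,g_2\colon T\to\mathcal N$ of $f$ pull back to $S'$-morphisms $T_{S'}\to\mathcal N\times_SS'$ extending the same $f_{S'}$, hence coincide by the uniqueness part of the N\'eron property for $\mathcal N\times_SS'$, and faithful flatness of $S'\to S$ then forces $g_1=g_2$. The most delicate point is the cocycle step, which requires both that $U\subseteq S$ be schematically dense (so that $T_{U''}$ is dense in $T_{S''}$) and that $\mathcal N/S$ be separated; both conditions hold in the setting of the paper, but absent density one would need to appeal to a finer descent argument using the universal property alone.
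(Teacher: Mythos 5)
Your proof is correct, but it takes a genuinely different route from the paper's. The paper argues in two stages: it first treats the case $T=S$ of a section $f\colon U\to A$, showing that the schematic closure $X$ of $f(U)$ inside $\mathcal N$ satisfies $X\times_SS'\cong S'$ (by commuting schematic closure with flat base change and identifying it with the image of the extended section $g'$), then descends faithful flatness of $X\to S$ along $S'\to S$ to conclude $X\cong S$; afterwards it reduces a general smooth $T\to S$ to the section case over $T$ via \cref{smooth_base_change}. You instead run a single descent argument directly on morphisms: extend over $S'$, verify the cocycle condition by combining separatedness of $\mathcal N/S$ with schematic density of $T_{U''}$ in $T_{S''}$, and descend. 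Your route is conceptually cleaner and avoids the two-step reduction, but note that it invokes fpqc descent of morphisms into the algebraic space $\mathcal N$, which relies on Gabber's theorem that quasi-separated algebraic spaces are fpqc sheaves (legitimately applicable since $\mathcal N$ is separated); the paper's schematic-closure argument sidesteps this by only descending the property of faithful flatness of a space $X\to S$. Both arguments tacitly assume $U$ is schematically dense in $S$ — the paper to call $X\to S$ birational, you to run the cocycle step — and you are right to flag this; in the paper's applications $U$ is always the complement of a divisor in a regular (hence reduced) scheme, so density is automatic. One minor point: your uniqueness step via faithful flatness again needs that $\mathcal N$ is a separated presheaf for the relevant topology; a slightly more elementary alternative is to note directly that $g_1,g_2\colon T\to\mathcal N$ agreeing on the schematically dense $T_U$ must agree by separatedness of $\mathcal N/S$.
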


\begin{proof}
We first show that $\mathcal N/S$ satisfies the universal property of N\'eron models when the smooth morphism $T\ra S$ is the identity. So, let $f\colon U\ra A$ be a section of $A/U$. To show that $f$ extends to a section $S\ra \mathcal N$ we only need to check that the schematic closure $X$ of $f(U)$ inside $\mathcal N$ is faithfully flat over $S$: indeed, $X\ra S$ is birational and separated; if it is also flat and surjective it is automatically an isomorphism. Now, by base change of $f$ we get a closed immersion $f'\colon U'\ra A\times_UU'$, which extends to a section $g'\colon S'\ra \mathcal N\times_SS'$ by hypothesis. The schematic image $g'(S')$ is necessarily the schematic closure of $f'(U')$ inside $\mathcal N\times_SS'$; since taking the schematic closure commutes with faithfully flat base change, we have $g'(S')=X\times_SS'$. We deduce that $X\ra S$ is faithfully flat, as its base change via $S'\ra S$ is such. Hence $f\colon U\ra A$ extends to a section $g\colon S\ra \mathcal N$. The uniqueness of the extension is a consequence of the separatedness of $\mathcal N$.

Next, let $T\ra S$ be smooth and let $f\colon T_U\ra A$. In order to extend $f$ to a morphism $g\colon T\ra \mathcal N$, it is enough to show that $\mathcal N\times_ST$ satisfies the extension property for sections $T_U\ra A\times_UT_U$. By the previous paragraph, it is enough to know that $(\mathcal N\times_ST)\times_SS'=(\mathcal N\times_SS')\times_ST$ is a N\'eron model of $(A\times_UT_U)\times_UU'$. This is true by \cref{smooth_base_change}, concluding the proof.
\end{proof}

\begin{lemma}\label{desc_smooth}
Let $A/U$ be abelian, $f\colon S'\ra S$ a smooth surjective morphism, $U'=U\times_SS'$, and $\mathcal N'/S'$ a N\'eron model of $A\times_SS'$.
Then there exists a N\'eron model $\mathcal N/S$ for $A$.
\end{lemma}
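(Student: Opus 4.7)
The plan is to produce $\mathcal N/S$ by fppf descent of the algebraic space $\mathcal N'/S'$, then verify that the result is a N\'eron model via \cref{fpqclocal}.

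First I would set $S'' := S'\times_S S'$ and $S''' := S'\times_S S'\times_S S'$ with projections $p_i \colon S'' \to S'$ and $p_{ij} \colon S''' \to S''$, all of which are smooth. By \cref{smooth_base_change}, both $p_1^*\mathcal N'$ and $p_2^*\mathcal N'$ are N\'eron models of $A\times_U(U\times_S S'')$ on $S''$. By uniqueness of N\'eron models (immediate from the defining universal property), there is a canonical isomorphism $\phi \colon p_1^*\mathcal N' \isomto p_2^*\mathcal N'$ extending the identity on the generic fibre. Applying the same uniqueness on $S'''$ to the three pullbacks $p_i^*p_j^*\mathcal N'$ (again N\'eron models by \cref{smooth_base_change}) forces the cocycle identity $p_{13}^*\phi = p_{23}^*\phi \circ p_{12}^*\phi$, since both sides are isomorphisms of N\'eron models over $S'''$ restricting to the identity on $U\times_S S'''$.

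Next I would invoke fppf descent for algebraic spaces (\citep{stacks}, descent of algebraic spaces along fppf coverings; a smooth surjective morphism is fppf) to produce an algebraic space $\mathcal N/S$ equipped with an isomorphism $\mathcal N\times_S S' \isomto \mathcal N'$ compatible with $\phi$. The properties of being smooth, separated and of finite type all descend along the fppf cover $f$, so $\mathcal N/S$ inherits them from $\mathcal N'/S'$. Similarly, the isomorphism $\mathcal N'\times_{S'} U' \isomto A\times_U U'$ is compatible (by construction of $\phi$ over the generic locus) with the two pullbacks to $U\times_S S''$, so it descends to an isomorphism $\mathcal N\times_S U \isomto A$.

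Finally, to check that the descended object $\mathcal N/S$ is a N\'eron model of $A$, I would apply \cref{fpqclocal}: its base change to $S'$ is $\mathcal N'$, which by hypothesis is a N\'eron model of $A\times_U U'$, and $S'\to S$ is faithfully flat. The main technical point in this plan is the existence of fppf descent for algebraic spaces with the descent datum $\phi$; once that is granted, everything else is straightforward bookkeeping of properties that are local on the base for the smooth (hence fppf) topology.
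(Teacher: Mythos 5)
Your proposal is correct and follows essentially the same route as the paper: smooth base change to see that both pullbacks to $S'' = S'\times_S S'$ are N\'eron models, uniqueness of N\'eron models to obtain the descent datum (with the cocycle condition, which you spell out and the paper leaves implicit), effectiveness of descent for algebraic spaces to produce $\mathcal N/S$, and finally \cref{fpqclocal} to verify the N\'eron property. The only difference is a matter of how much detail is written out; the logical structure is identical.
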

\begin{proof}
Write $S'':=S'\times_SS'$, $p_1,p_2\colon S''\ra S'$ for the two projections and $q\colon S''\ra S$ for $f\circ p_1=f\circ p_2$. By \cref{smooth_base_change}, both $p_1^*\mathcal N$ and $p_2^*\mathcal N$ are N\'eron models of $q^*A$. By the uniqueness of N\'eron models, we obtain a descent datum for $\mathcal N'$ along $S'\ra S$. Effectiveness of descent data for algebraic spaces (\citep{stacks}\href{http://stacks.math.columbia.edu/tag/0ADV}{TAG 0ADV}) yields a smooth, separated algebraic space $\mathcal N/S$ of finite type. By \cref{fpqclocal}, this is a N\'eron model for $A/U$. 
\end{proof}

Although N\'eron models are not stable under base change (not even flat), they are preserved by localizations, as we see in the following lemma:
\begin{lemma}\label{NMlocaliz}
Assume $S$ is locally noetherian. Let $s$ be a point (resp. geometric point) of $S$ and $\til S$ the spectrum of the localization (resp. strict henselization) at $s$. Suppose that $\mathcal N/S$ is a N\'eron model for $A/U$. Then $\mathcal N\times_S\til S$ is a N\'eron model for $A\times_U \til U$, where $\til U=\til S\times_S U$.
\end{lemma}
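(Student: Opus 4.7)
The plan is to verify directly that $\mathcal N\times_S\til S$ satisfies the universal property of the N\'eron model of $A_{\til U}/\til U$, using a limit argument to reduce to the N\'eron property enjoyed by $\mathcal N$ after base change to smooth neighbourhoods of $s$ in $S$ (which is supplied by \cref{smooth_base_change}).

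First, smoothness, separatedness and finite type are preserved by base change, and the isomorphism $(\mathcal N\times_S\til S)\times_{\til S}\til U\simeq A_{\til U}$ is the pullback of $\mathcal N_U\simeq A$. So only the extension property remains to be checked. Let $T\ra\til S$ be a smooth morphism of schemes, and let $f\colon T_{\til U}\ra A_{\til U}$ be a $\til U$-morphism. Since constructing $g$ is Zariski-local on $T$, and since $\til S$ is noetherian, I may assume $T$ is affine and therefore of finite presentation over $\til S$.

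Next, write $\til S=\lim_{\lambda}V_\lambda$ as a cofiltered limit of affine schemes, where the $V_\lambda\ra S$ are open immersions around $s$ (in the localization case) or pointed affine \'etale neighbourhoods of $s$ (in the strict henselization case); in either case each $V_\lambda\ra S$ is smooth. By the standard limit theorems for schemes and smooth morphisms (EGA IV\textsubscript{3} \S8 together with IV\textsubscript{4} \S17, see e.g.\ \citep{stacks}\href{http://stacks.math.columbia.edu/tag/01ZM}{TAG 01ZM}), after possibly enlarging $\lambda$ there exist a smooth finite-type morphism $T_\lambda\ra V_\lambda$ with $T\cong T_\lambda\times_{V_\lambda}\til S$ and a $U\cap V_\lambda$-morphism $f_\lambda\colon(T_\lambda)_{U\cap V_\lambda}\ra A_{U\cap V_\lambda}$ that pulls back to $f$.

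Since $V_\lambda\ra S$ is smooth, \cref{smooth_base_change} implies that $\mathcal N\times_SV_\lambda$ is a N\'eron model of $A_{U\cap V_\lambda}$. Its defining universal property, applied to $T_\lambda\ra V_\lambda$ and $f_\lambda$, furnishes a unique extension $g_\lambda\colon T_\lambda\ra \mathcal N\times_SV_\lambda$. Base-changing $g_\lambda$ along $\til S\ra V_\lambda$ produces the required $\til S$-morphism $g\colon T\ra \mathcal N\times_S\til S$ extending $f$; its uniqueness follows from separatedness of $\mathcal N\times_S\til S$ together with schematic density of $T_{\til U}$ in $T$ (itself a consequence of $\til U$ being schematically dense in $\til S$ and $T\ra\til S$ flat). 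The only real obstacle is the limit-theoretic step of descending the pair $(T,f)$ to a finite level $V_\lambda$; once this is in place, \cref{smooth_base_change} does all the substantive work.
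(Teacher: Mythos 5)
Your proof is correct and follows essentially the same route as the paper: reduce to the finite-presentation case, descend the test datum $(T,f)$ to a finite stage via the standard limit theorems (EGA IV\textsubscript{3} \S8, which is what the paper cites as 8.8.2), invoke \cref{smooth_base_change} to obtain the extension there, and base-change back to $\til S$. The only cosmetic difference is that you phrase the descent as a cofiltered limit argument with an explicit uniqueness remark via separatedness and schematic density, whereas the paper quotes the EGA spreading-out result directly.
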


\begin{proof}
Let $\til Y\ra \til S$ be a smooth scheme and $\til f\colon \til Y_{\til U}\ra A_{\til U}$ a morphism. We may assume that $\til Y$ is of finite type over $\til S$, hence of finite presentation. By \citep[3, 8.8.2]{EGA4} there exist an open neighbourhood (resp. \'etale neighbourhood) $S'$ of $s$, a scheme $Y'\ra S'$ restricting to $\til Y$ over $\til S$, and a $(U\times_SS')$-morphism $f'\colon Y'\times_{S'}(U\times_SS')\ra \mathcal N\times_S(U\times_SS')$ restricting to $\til f$ on $\til U$. By \cref{smooth_base_change}, $\mathcal N\times_SS'$ is a N\'eron model of $\mathcal N\times_S(U\times_SS')$, hence we get a unique extension $g'\colon Y'\ra \mathcal N\times_SS'$ of $f'$. The base-change of $g'$ via $\til S\ra S'$ gives us the required unique extension of $\til f$.
\end{proof}

\begin{proposition}
Assume that $S$ is regular. If $\mathcal A/S$ is a an abelian algebraic space, then it is a N\'eron model of its restriction $\mathcal A\times_SU$.
\end{proposition}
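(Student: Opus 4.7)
My plan is to verify the Néron universal property for $\mathcal A/S$ directly, reducing the existence of extensions to Weil's classical theorem on rational maps into abelian schemes. First, I would use \cref{fpqclocal} together with \cref{smooth_base_change} to localize the question Zariski-locally on $S$, so that I may assume $S$ is regular and connected, hence integral. Either $U=\emptyset$ (where the statement is degenerate) or $U$ is dense in $S$; smoothness of any test scheme $T\ra S$ then guarantees that $T_U\subset T$ is schematically dense and that $T$ is itself regular.

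For uniqueness, the separatedness of $\mathcal A/S$ (which is proper) immediately implies that two $S$-morphisms $T\ra \mathcal A$ agreeing on the schematically dense open $T_U$ must coincide. For existence, given smooth $T\ra S$ and $f\colon T_U\ra\mathcal A_U$, I would reinterpret $f$ as a $T$-morphism $T_U\ra \mathcal A_T:=\mathcal A\times_ST$, noting that $\mathcal A_T/T$ is again an abelian algebraic space over the regular base $T$. The desired extension to an $S$-morphism $T\ra \mathcal A$ is then supplied by Weil's extension theorem applied to the abelian algebraic space $\mathcal A_T/T$ and the rational section defined on the dense open $T_U\subset T$.

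I expect the main obstacle to be the invocation of Weil's theorem in the algebraic space setting. The standard proof for schemes (see \citep{BLR}, Chapter 8) proceeds by forming the schematic closure $\overline{\Gamma}$ of the graph of $f$ inside $\mathcal A_T$, and showing that $\overline{\Gamma}\ra T$ is proper and birational; one then uses the absence of rational curves in the abelian fibres of $\mathcal A_T/T$, combined with regularity of $T$, to conclude that $\overline{\Gamma}\ra T$ is in fact an isomorphism, whose inverse composed with the projection to $\mathcal A_T$ is the sought extension. I would check that this graph-closure argument is insensitive to the distinction between schemes and algebraic spaces, either by passing to an étale cover of $\mathcal A_T$ and applying the classical statement in that chart, or by appealing to the fact that an abelian algebraic space over a noetherian normal base is a scheme, which would let me invoke the BLR result verbatim.
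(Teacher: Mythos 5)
Your argument takes a genuinely different route from the paper's. The paper identifies $\mathcal A$ with its double dual $\Pic^0_{\mathcal A'/S}$ and then translates the extension problem into one about line bundles: a $T_U$-point of $\mathcal A_U$ corresponds to a degree-zero invertible sheaf on $\mathcal A'_{T_U}$, and this extends over the regular scheme $\mathcal A'_T$ because restriction $\Pic(\mathcal A'_T)\to\Pic(\mathcal A'_{T_U})$ is surjective (Weil divisors are Cartier on a regular scheme, and closures of Weil divisors give preimages). You instead apply a Weil-type extension theorem directly to the rational section $T_U\to\mathcal A_T$ of the abelian scheme $\mathcal A_T/T$ over the regular base $T$. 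Both are legitimate, and both of you reduce to the scheme case via Raynaud's theorem that abelian algebraic spaces over normal noetherian bases are schemes (the paper does this implicitly when it says ``we may assume $\mathcal A/S$ is a scheme''). The reductions via \cref{fpqclocal} and \cref{smooth_base_change}, the density of $T_U$ in $T$, and the uniqueness-from-separatedness argument are all fine and essentially match the paper.

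Two points of your existence argument deserve more care than the sketch gives them. First, the reference: \citep[1.2/8]{BLR} proves that abelian schemes over Dedekind bases are N\'eron models, and \citep[4.4/1]{BLR} is Weil's extension theorem for \emph{$S$-rational maps} with an $S$-dense domain of definition; taking $Z=S=T$ there degenerates (an $S$-dense open of $S$ is all of $S$), so one cannot quote it verbatim for a section of $\mathcal A_T/T$ defined only on the dense open $T_U$. The statement you want --- a section of a smooth proper group scheme over a regular noetherian base, defined on a dense open, extends everywhere --- is true, but you would need to re-run Weil's group-law argument after first passing to codimension $\leq 1$ by the valuative criterion, or find a reference that states this form (BLR does not, at least not in the place you cite). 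Second, the ``graph closure plus no rational curves in abelian fibres'' sketch is not quite a proof: that $\overline\Gamma\to T$ is proper birational with $T$ regular does not by itself produce rational curves in the fibres of $\overline\Gamma\subset\mathcal A_T$. One needs to normalize $\overline\Gamma$, invoke van der Waerden purity to see that the exceptional locus is divisorial, and then an Abhyankar-type ruledness statement for exceptional divisors over regular local rings; only then does the absence of rational curves in abelian varieties give the contradiction. These gaps are fixable, but they explain why the paper's self-contained biduality argument --- which needs nothing beyond surjectivity of class groups under restriction to opens of a regular scheme --- is the cleaner option here.
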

\begin{proof}
Using \cref{fpqclocal}, we may assume that $S$ is strictly local and that $\mathcal A/S$ is a scheme. We identify $\mathcal A$ with its double dual $\mathcal A''=\Pic^0_{\mathcal A'/S}$. Now let $T\ra S$ be smooth and $f\colon T_U\ra \mathcal A_U$. Then $f$ corresponds to an element of $A_U(T_U)=\Pic^0_{\mathcal A'/S}(T_U)=\Pic^0(\mathcal A'_{T_U})/\Pic^0(T_U)$. Let $\mathcal L_U$ be an invertible sheaf with fibres of degree $0$ on $\mathcal A'_{T_U}$ mapping to $f$ in $\mathcal A_U(T_U)$. As $\mathcal A'_T$ is regular, $\mathcal L_U$ extends to an invertible sheaf of degree $0$ on $\mathcal A'_T$, which yields a $T$-point of $\mathcal A''=\mathcal A$ extending $f$. The uniqueness of the extension follows from the separatedness of $\mathcal A/S$.
\end{proof}

We conclude the subsection by stating the main theorem about N\'eron models in the case where the base $S$ is of dimension $1$.

\begin{theorem}[\citep{BLR}, 1.4/3]\label{dim1}
Let $S$ be a connected Dedekind scheme with fraction field $K$ and let $A/K$ be an abelian variety. Then there exists a N\'eron model $\mathcal N$ over $S$ for $A/K$. 
\end{theorem}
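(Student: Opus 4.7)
The plan is to reduce to the local (and then strictly henselian) case and then construct the N\'eron model from a proper model of $A$ by combining the N\'eron smoothening process with the passage from birational group laws to group schemes.

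First, by \cref{NMlocaliz} together with \cref{desc_smooth}, the construction is local on $S$: it suffices to build a N\'eron model over each localization $\Spec \O_{S,s}$ at the (finitely many) closed points $s$ where $A$ requires an extension, and glue. Hence we may assume $S = \Spec R$ with $R$ a discrete valuation ring of fraction field $K$. A further descent step, using \cref{fpqclocal} in combination with a standard limit argument (writing the strict henselization $R^{sh}$ as a filtered colimit of \'etale $R$-algebras, and spreading out the N\'eron model over a finite level by \citep[3,~8.8.2]{EGA4}), reduces us to the case where $R$ is strictly henselian.

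In this setting the construction proceeds in two stages. \textbf{Stage A (weak N\'eron model):} starting from a projective, flat model $\overline{A}/R$ --- for instance the schematic closure of $A$ under a projective embedding --- I would apply the \emph{N\'eron smoothening algorithm}. This is a finite sequence of blow-ups with centres supported in the non-smooth locus of the closed fibre, controlled by a numerical invariant (the N\'eron defect of a $K$-point). One shows inductively that after finitely many such blow-ups, and after restricting to the smooth locus, every element of $A(K^{sh}) = A(K)$ extends to an $R$-section. The outcome is a smooth separated $R$-model $\mathcal X$ of finite type containing all $R$-sections of $A$. \textbf{Stage B (birational group law):} the multiplication $A \times_K A \to A$ extends to a rational map $m_{\mathcal X}\colon \mathcal X \times_R \mathcal X \dashrightarrow \mathcal X$ defined at the generic points of both fibres and satisfying the axioms of a birational group law; by Weil's theorem on the extension of birational group laws to group schemes (in the relative form due to Artin), this yields a smooth separated group algebraic space $\mathcal N/R$ of finite type, birational to $\mathcal X$ and agreeing with $A$ over $K$.

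The N\'eron universal property then follows as in \cref{fpqclocal}: uniqueness is given by separatedness, and for existence one reduces via smoothness of $T \to R$ to extending sections over discrete valuation rings $V$ dominating $R$. For such a $V$, the weak N\'eron property of $\mathcal X$ applied to $A(V)$, combined with translation by the group law of $\mathcal N$ to reach a smooth point of the closed fibre, yields the required extension. The main obstacle is \textbf{Stage A}: the smoothening algorithm rests on a delicate combinatorial induction on the N\'eron defect, showing that a suitable dilatation strictly decreases it. This is the genuinely technical part of the proof and it is not shortcut by any of the preliminary results collected earlier in this section.
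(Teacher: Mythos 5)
This theorem is not proved in the paper: it is quoted as \citep[1.4/3]{BLR} and used as a black box, so there is no internal argument for you to match. Your outline---descend to the strictly henselian local case, construct a weak N\'eron model by the smoothening process, then pass to a group scheme via Weil's extension theorem for birational group laws (Artin's relative version)---is a fair summary of the proof in the cited reference, and the emphasis on Stage~A (the smoothening induction) as the hard technical core is also accurate. One small caveat about your reduction steps: \cref{NMlocaliz} runs in the wrong direction (it produces a local N\'eron model from a global one, not conversely), so it does not by itself justify gluing the local N\'eron models into a global one; and \cref{desc_smooth} handles descent along a smooth \emph{surjective} cover, which is not the Zariski gluing over the finitely many bad closed points that you need here. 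The actual gluing over a Dedekind base is \citep[1.2/4]{BLR}, resting on uniqueness of N\'eron models plus the fact that $A$ extends to an abelian scheme over a dense open; neither step is among the paper's own lemmas, because the paper never needs to reprove this classical result.
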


\subsection{The group of connected components of a N\'eron model}
Since we are mainly interested in the N\'eron model of the jacobian of a smooth curve $\mathcal C_U/U$ extending to a nodal curve $\mathcal C/S$, we will now work under the following hypotheses: $S$ is a regular scheme, $U\subset S$ is an open dense, $A/U$ is an abelian scheme and $\mathcal A/S$ is a semi-abelian scheme with $\mathcal A\times_SU=A$.

The following result is extremely useful:

\begin{lemma} \label{A=N0}
Suppose that $A$ admits a N\'eron model $\mathcal N/S$. Then the canonical morphism $\mathcal A\ra \mathcal N$ coming from the universal property of \cref{defn_NM} is an open immersion, and induces an isomorphism from $\mathcal A$ to the fibrewise-connected component of identity $\mathcal N^0$.
\end{lemma}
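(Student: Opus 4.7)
The N\'eron mapping property of \cref{defn_NM}, applied to the smooth $S$-space $\mathcal A$ with the isomorphism $\mathcal A_U\simeq A$, yields a unique morphism $\phi:\mathcal A\to \mathcal N$. The uniqueness clause, applied to the two morphisms $\phi\circ m_{\mathcal A}$ and $m_{\mathcal N}\circ(\phi\times\phi):\mathcal A\times_S\mathcal A\to\mathcal N$ which agree over $U$, shows that $\phi$ is a homomorphism of $S$-group objects. Since $\mathcal A/S$ has geometrically connected fibres (semi-abelian) and $\phi$ preserves the identity, $\phi$ factors through the open subspace $\mathcal N^0\subset\mathcal N$. The lemma thus reduces to showing that $\phi:\mathcal A\to \mathcal N^0$ is an isomorphism; both sides are smooth over $S$ of the same relative dimension $d=\dim A$, and $\phi$ is an isomorphism over the dense open $U$.

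I would first prove that $\phi$ is \'etale. By translation invariance (group homomorphism between smooth group objects), this reduces to \'etaleness along the identity section $e:S\to\mathcal A$, which in turn is equivalent to the differential $d\phi|_e:e^*T_{\mathcal A/S}\to e^*T_{\mathcal N^0/S}$ being an isomorphism of rank-$d$ locally free sheaves on $S$. Equivalently, the determinant section $\det d\phi|_e\in H^0(S,L)$, with $L:=\det(e^*T_{\mathcal N^0/S})\otimes\det(e^*T_{\mathcal A/S})^{\vee}$, must be nowhere vanishing. It is certainly nonzero since $\phi$ is an isomorphism on $U$, so its zero locus $Z\subset S$ is a proper closed subscheme of $S\setminus U$. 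On the regular (hence normal) scheme $S$, the zero locus of a nonzero section of a line bundle has pure codimension one, so it suffices to check that $Z$ contains no codimension-one point.

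For $\eta$ a codimension-one point of $S$, the localisation $R:=\mathcal O_{S,\eta}$ is a DVR; by \cref{NMlocaliz}, $\mathcal N_R$ is the N\'eron model of $A_{\operatorname{Frac}(R)}$ over $R$, while $\mathcal A_R$ is a semi-abelian $R$-scheme extending the same abelian variety. The classical uniqueness of semi-abelian extensions over a base (Deligne's theorem, as invoked in the proof of \cref{TA_blowups}) then identifies the canonical map $\mathcal A_R\to \mathcal N^0_R$ as an isomorphism, so $\det d\phi|_e$ is nonvanishing at $\eta$. It follows that $\operatorname{codim}Z\geq 2$, hence $Z=\emptyset$, and $\phi$ is \'etale.

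To conclude, let $K:=\ker\phi$, which is the base change of $\phi$ along the identity section $e:S\to\mathcal N^0$. As $\phi$ is \'etale, so is $K\to S$; moreover $K\times_SU$ is the trivial group. The open subspace $K\setminus e(S)\to S$ is thus \'etale with empty fibres over the dense open $U$, so its image in $S$ is open and avoids $U$, hence is empty, and $K=e(S)$. Thus $\phi$ is a monomorphism, and \'etale plus monomorphism yields an open immersion. On each fibre, $\phi_s(\mathcal A_s)$ is an open subgroup of the connected group $\mathcal N^0_s$, hence equals it, so $\phi$ is surjective and thus an isomorphism onto $\mathcal N^0$. The main obstacle is the DVR step, which leans on the fact that over a DVR the existence of a semi-abelian model forces the identity component of the N\'eron model to be semi-abelian, after which Deligne's uniqueness applies.
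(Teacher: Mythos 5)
Your proof is correct and follows a genuinely more explicit and self-contained route than the paper's, which is a terse string of citations: the paper invokes \cite[IX, Prop.~3.1.e]{SGA7} for the open immersion $\mathcal A\ra\mathcal N$ and then, after reducing to codimension-one localisations via \cref{NMlocaliz}, invokes \cite[XI, 1.15]{raynaud} for the isomorphism $\mathcal A\isomto\mathcal N^0$. You replace the first citation with an explicit argument: $\phi$ factors through $\mathcal N^0$ by fibrewise connectedness; \'etaleness of $\phi$ is encoded in nonvanishing of the determinant section $\det d\phi|_e$ of a line bundle on $S$, whose zero locus, on the regular (hence, on each connected component, integral) scheme $S$, is of pure codimension one and is therefore controlled by the codimension-one (DVR) case; triviality of $\ker\phi$ then upgrades \'etale to open immersion; and fibrewise surjectivity follows since an open subgroup of a connected group scheme is everything. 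Both your argument and the paper's ultimately rest on the same codimension-one input — that over a DVR the canonical map from a semi-abelian model to the N\'eron model is an isomorphism onto the identity component — which you phrase informally as a ``fact'' in your final sentence; you should cite it precisely as \cite[XI, 1.15]{raynaud} or \cite[7.4/3]{BLR} rather than only invoking Deligne's uniqueness (which requires knowing in advance that $\mathcal N^0_R$ is semi-abelian, and that is precisely what Raynaud's result supplies). Modulo making that citation explicit, the argument is complete; what it buys over the paper's is that the reduction from the higher-dimensional base to the DVR case is spelled out by the determinant/purity trick rather than absorbed into the SGA7 reference.
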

\begin{proof}
The fact that $\mathcal A\ra \mathcal N$ is an open immersion follows from \cite[IX, Prop. 3.1.e]{SGA7}. For every point $s\in S$ of codimension $1$, the restriction of $\mathcal N$ to the local ring $\O_{S,s}$ is the N\'eron model of its generic fibre, by \cref{NMlocaliz}. It follows by \citep[XI, 1.15]{raynaud} that the induced morphism $\mathcal A\ra \mathcal N^0$ is an isomorphism.
\end{proof}

We obtain an exact sequence of fppf-sheaves on $S$

\begin{equation}\label{ex_seq_comps} 
0\ra \mathcal A\ra \mathcal N\ra \Phi\ra 0
\end{equation}

where $\Phi$ is an \'etale quasi-finite (in general non-separated) group algebraic space over $S$, the \textit{group of connected components} of $\mathcal N$. 

In \citep[IX, 11]{SGA7} this group is studied in depth. Although only the case where $S$ is the spectrum of a discrete valuation ring is treated, most results carry over to more general bases. In this subsection we describe some results about $\Phi$ which are useful for us.

Let us assume that $S$ is moreover strictly local, with closed point $s$, residue field $k$, and fraction field $K$. We are interested in the finite abelian group $\Phi_s(k)$. 
Let $m$ be an integer; multiplication by $m$ on $\mathcal A$ is a faithfully flat morphism; hence, restricting the exact sequence \ref{ex_seq_comps} to the closed fibre and taking $m$-torsion, we find an exact sequence
$$0\ra \mathcal A_s[m]\ra \mathcal N_s[m]\ra \Phi_s[m]\ra 0.$$
Taking $k$-valued points, we have
$$\Phi_s[m](k)=\frac{\mathcal N_s[m](k)}{\mathcal A_s[m](k)}.$$

The group scheme $\mathcal A[m]/S$ is separated, quasi-finite and flat over an henselian local ring; hence it decomposes canonically into a disjoint union $\mathcal A[m]=\mathcal A[m]^f\sqcup B$, with $\mathcal A[m]^f/S$ a finite flat group scheme, and $B_s=\emptyset$. In particular, we find that $\mathcal A[m]^f_K(K)\subseteq A[m](K)$ is the subgroup of points extending to elements of $\mathcal N(S)$ contained in $\mathcal A(S)$. 

We can rewrite 
$$\Phi_s[m](k)=\frac{\mathcal N_s[m](k)}{\mathcal A_s[m](k)}=\frac{\mathcal N[m](S)}{\mathcal A[m](S)}=\frac{\mathcal A[m]_K(K)}{\mathcal A[m]^f_K(K)}$$
where the second equality is due to the fact that $S$ is henselian, and the third equality is due to the universal property of N\'eron models.

Now let $X$ be the character group of the maximal subtorus $T\subseteq \mathcal A_s$. It is a free abelian group of rank $\mu=\dim T$. We let $\mathcal X$ be the constant group scheme on $S$ with value $X$, and $\mathcal X_m:=\mathcal X\otimes_{\Z}\Z/m\Z$. By \citep[IX, 11.6.7]{SGA7} there is a canonical isomorphism
$$\mathcal X_{m,K} =\frac{\mathcal A[m]_K}{\mathcal A[m]^f_K}.$$
As $\mathcal X/S$ is constant, we have $X\otimes\Z/m\Z=\mathcal X_{m,K}(K)$. 
As the natural map 
$$\frac{\mathcal A[m]_K(K)}{\mathcal A[m]^f_K(K)}\ra \frac{\mathcal A[m]_K}{\mathcal A[m]^f_K}(K)$$
is injective, we have obtained a canonical injective homomorphism
$$\Phi_s[m](k)\hookrightarrow X\otimes\Z/m\Z$$

and taking the colimit over $m\in \Z$, a canonical injective homomorphism of abelian groups
\begin{equation}\label{inclusion_phi}\Phi_s(k)\hookrightarrow X\otimes_{\Z}\Q/\Z.
\end{equation}

Let us return to the case where $\mathcal C/S$ is a nodal curve, smooth over $U$, such that $\Pic^0_{\mathcal C_U/U}$ admits a N\'eron model $\mathcal N/S$. In view of \cref{A=N0}, the fibrewise-connected component of identity of $\mathcal N$ is canonically identified with $\Pic^0_{\mathcal C/S}$, and we have an exact sequence of fppf-shaves on $S$ 
$$0\ra \Pic^0_{\mathcal C/S}\ra \mathcal N \ra \Phi\ra 0$$

We are going first of all to define a generization map for the group of components. Let $\zeta$ be any point of $S$ and let $Z\subseteq S$ be its schematic closure, which is still a strictly local scheme. Let $a\in\Phi_s(s)$ be a component of the closed fibre $\mathcal N_s$. By Hensel's lemma we can extend $a$ to a section $\alpha\in \Phi_Z(Z)$. Because $\Phi_Z/Z$ is \'etale, this extension is actually unique. By restriction we find $\alpha_{\zeta}\in\Phi_{\zeta}(\zeta)$. We have constructed a generization homomorphism of finite abelian groups
$$\Phi_s(s)\ra \Phi_{\zeta}(\zeta), \; a\mapsto \alpha_{\zeta}.$$

Assume now that $S$ has a normal crossing divisor $D=D_1\cup D_2\ldots \cup D_n\subset S$, with $U=S\setminus D$. Let $\zeta_1,\ldots,\zeta_n$ be the generic points of $D_1,\ldots,D_n$. We have seen in \cref{split_sing_sh} that $\mathcal C_{\zeta_i}$ has split singularities and that therefore the maximal torus $T_i$ is split (\cref{lemma_split_torus}). This means that the absolute Galois group $\Gal(\overline{\zeta_i}|\zeta_i)$ acts trivially on the character group $X_i$, and in particular on the subgroup $\Phi(\overline{\zeta_i})$ of $X_i\otimes\Q/\Z$. Hence $\Phi_{\zeta_i}/\zeta_i$ is a constant group scheme, completely determined by its group $\Phi_i=\Phi_{\zeta_i}(\zeta_i)=\Phi_{\zeta_i}(\overline \zeta_i)$ of $\zeta_i$-valued points. 

Putting together the generization homorphisms $\Phi(s)\ra \Phi_i$, we obtain a \textit{purity map} of groups of components:

\begin{equation}\label{purity_phi}
\Phi(s)\ra \Phi_1\oplus \Phi_2\oplus\ldots\oplus \Phi_n. 
\end{equation}

\begin{lemma}\label{injective_comps}
The group homomorphism 
$$\Phi(s)\ra \Phi_1\oplus\Phi_2\oplus\ldots\oplus\Phi_n$$
is injective.
\end{lemma}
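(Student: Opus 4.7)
The plan is to sandwich the purity map \ref{purity_phi} inside the character purity map tensored with $\mathbb{Q}/\mathbb{Z}$ via the canonical inclusions of \ref{inclusion_phi}, and then deduce its injectivity from injectivity of the bottom map. Concretely, I would exhibit a commutative diagram
\begin{center}
\begin{tikzcd}
\Phi(s) \arrow[r] \arrow[hookrightarrow, d] & \bigoplus_{i=1}^n \Phi_i \arrow[hookrightarrow, d] \\
X \otimes_{\mathbb{Z}} \mathbb{Q}/\mathbb{Z} \arrow[r] & \bigoplus_{i=1}^n X_i \otimes_{\mathbb{Z}} \mathbb{Q}/\mathbb{Z}
\end{tikzcd}
\end{center}
in which the top row is \ref{purity_phi} and the bottom row is the character purity map \ref{purity_char} tensored with $\mathbb{Q}/\mathbb{Z}$. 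For the right-hand vertical inclusion to make sense one first uses \cref{NMlocaliz}: since $\zeta_i$ has codimension $1$ in the regular scheme $S$, the spectrum $\widetilde S_i := \Spec \O^{sh}_{S,\zeta_i}$ is a strictly local discrete valuation ring, $\mathcal N|_{\widetilde S_i}$ is a N\'eron model over $\widetilde S_i$, and the construction of \ref{inclusion_phi} applies to give $\Phi_i \hookrightarrow X_i \otimes_{\mathbb{Z}} \mathbb{Q}/\mathbb{Z}$.

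Granting commutativity, injectivity of the top row follows by a three-line chase once we know the bottom row is injective. But \cref{injective_purity} and \cref{tf_coker} yield a short exact sequence $0 \to X \to \bigoplus_i X_i \to C \to 0$ with $C$ torsion-free; applying $-\otimes_{\mathbb{Z}}\mathbb{Q}/\mathbb{Z}$ gives a $\mathrm{Tor}_1^{\mathbb{Z}}(C, \mathbb{Q}/\mathbb{Z})$ term which equals the torsion subgroup of $C$, hence vanishes. So the bottom row is injective, and the diagram chase through the two injective verticals yields what we want.

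The main technical obstacle, and the only real one, is checking commutativity of the square. This is a compatibility statement between two generization procedures: Hensel-extending a component section along $Z_i := \overline{\{\zeta_i\}}$ and restricting to $\zeta_i$ (for $\Phi$), versus taking the closure of the generic-fibre torus $T_i$ inside $\Pic^0_{\mathcal C/Z_i}$, restricting to the closed point and dualizing (for $X$, cf.~\ref{specialization_tori}). I would verify this by base-changing all structures to $\widetilde S_i$, over which $\mathcal N|_{\widetilde S_i}$ is the classical N\'eron model of an abelian variety over a strict Henselian DVR. There the identification $\mathcal X_{m,K} = \mathcal A[m]_K/\mathcal A[m]^f_K$ of \citep[IX, 11.6.7]{SGA7} is natural under the inclusion of bases $\widetilde S_i \hookrightarrow S$, and tracing an $m$-torsion section of $\mathcal N_s$ through the decomposition $\mathcal A[m] = \mathcal A[m]^f \sqcup B$ on both sides of the square matches the two generization maps. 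Once this functoriality is pinned down, the lemma follows immediately.
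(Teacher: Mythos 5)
Your proposal is essentially identical to the paper's proof: the same commutative square with vertical inclusions into $X\otimes\Q/\Z$ and $\bigoplus_i X_i\otimes\Q/\Z$ coming from \eqref{inclusion_phi}, injectivity of the bottom row from \cref{injective_purity} together with the torsion-free cokernel of \cref{tf_coker}, and the immediate diagram chase. You are somewhat more explicit than the paper about why the right-hand vertical is defined (localizing at $\zeta_i$ via \cref{NMlocaliz}) and about the compatibility underlying commutativity, both of which the paper simply asserts; there is no substantive divergence.
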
 

\begin{proof}
The diagram
\begin{center}
\begin{tikzcd}
\Phi(s)\arrow[r]\arrow[d] & \Phi_1\oplus\ldots\oplus\Phi_n\arrow[d] \\
X\otimes\Q/\Z \arrow[r] & (X_1\oplus\ldots\oplus X_n)\otimes\Q/\Z
\end{tikzcd}
\end{center}
where the vertical maps are as in \eqref{inclusion_phi} and the horizontal maps are the purity homomorphisms \ref{purity_phi} and \ref{purity_char}, is commutative. The vertical maps are injective; moreover the purity map $X\ra X_1\oplus\ldots X_n$ is also injective (\cref{injective_purity}), and has torsion-free cokernel (\cref{tf_coker}), hence it is still injective when tensored with $\Q/\Z$. It follows that the top horizontal map is injective.

\end{proof}

\section{Toric additivity as a criterion for existence of N\'eron models}\label{section_finale}

\subsection{Aligned curves}
We start by recalling the definition of aligned nodal curve introduced in \citep{holmes}.
\begin{definition}[\citep{holmes}, definition 2.11]\label{def_aligned}
Let $\mathcal C\ra S$ be a nodal curve and $s$ a geometric point of $S$. Consider the base change $\mathcal C_{S'}/S'$ to the strict henselization $S'$ at $s$, and the labelled graph $(\Gamma,l)$ of \cref{def_labelled_graph}. We say that $\mathcal C/S$ is \textit{aligned at $s$} if for every cycle $\gamma\subset \Gamma$ and every pair of edges $e,e'$ of $\gamma$, there exist integers $n,n'$ such that $$l(e)^n=l(e')^{n'}.$$

We say that $C/S$ is \textit{aligned} if it is aligned at every geometric point of $S$. 
\end{definition}

The following is the main theorem of \citep{holmes}, establishing the relation between alignment and existence of N\'eron models of jacobians: 

\begin{theorem}[\citep{holmes}, theorem 5.16, theorem 6.2] \label{holmes} Let $S$ be regular, $U\subset S$ a dense open, $f\colon \mathcal C\ra S$ a nodal curve, with $f_U\colon \mathcal C_U\ra U$ smooth. 
\begin{itemize}
\item[i)] If the jacobian $\Pic^0_{\mathcal C_U/U}$ admits a N\'eron model over $S$, then $\mathcal C/S$ is aligned;
\item[ii)] if $\mathcal C$ is regular and $\mathcal C/S$ is aligned, then $\Pic^0_{\mathcal C_U/U}$ admits a N\'eron model over $S$.
\end{itemize}

\end{theorem}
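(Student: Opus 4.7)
The plan is to follow a Raynaud-style analysis, localizing at geometric points and exploiting the interplay between the character/component structure developed in \cref{section_TA} and \cref{sectionNM}. By the base-change lemmas \cref{smooth_base_change}, \cref{fpqclocal} and \cref{NMlocaliz}, existence of a N\'eron model is smooth-local on $S$ and passes to strict henselizations; alignment (\cref{def_aligned}) is smooth-local as well. So in both parts I reduce to the case $S = \Spec \O^{sh}_{S,s}$ strictly henselian regular with normal crossing divisor $D = D_1 \cup \ldots \cup D_n$ and generic points $\zeta_1, \ldots, \zeta_n$.

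For part i), suppose $\mathcal N/S$ exists. The component sheaf $\Phi$ enjoys the injective purity map $\Phi(s) \hookrightarrow \bigoplus_i \Phi_i$ of \cref{injective_comps}. At each $\zeta_i$ the local ring is a discrete valuation ring, so the restriction of $\mathcal N$ there is the N\'eron model over a trait by \cref{NMlocaliz}, and $\Phi_i$ is computed by classical dimension-one theory (SGA 7 IX): it is the cokernel of an explicit homomorphism $X_i \to \Hom(X_i, \Z)$ whose matrix is determined by the labels of the edges of $\Gamma_i$. Now assume $\mathcal C/S$ is not aligned at $s$: then some cycle $\gamma$ in $\Gamma$ carries edges $e, e'$ with labels $l(e)$ and $l(e')$ not multiplicatively dependent in the monoid $L_s$. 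I would then pick suitable one-parameter test families $B \to S$ through $s$ (traits whose intersection numbers with the $D_i$ realize specific integer ratios) and analyze the order of the $\gamma$-class in the component group of the pullback $\mathcal N_B$ over $B$. Non-alignment produces a dependence of this order on the choice of $B$, whereas compatibility with the single group $\Phi(s)$ through the generization maps into each $\Phi_i$ forces the order to be constant across all $B$. The resulting numerical contradiction shows that $\mathcal N$ cannot exist.

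For part ii), the regularity of $\mathcal C$ provides two essential inputs: $\Pic_{\mathcal C/S}$ is representable as a smooth algebraic space (Artin--Raynaud), and every line bundle on $\mathcal C_U$ extends to a line bundle on $\mathcal C$ because $\mathcal C$ is regular (so line bundles are divisor classes which extend across the codimension-one locus $D$). The identity component is the semi-abelian $\Pic^0_{\mathcal C/S}$, and the total-degree-zero subspace $\Pic^{[0]}_{\mathcal C/S}$ is a smooth (typically non-separated) candidate containing it. My plan is to construct the N\'eron model as the quotient of $\Pic^{[0]}_{\mathcal C/S}$ by a closed subgroup space $E$ of components that are trivial along all codimension-one generizations. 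Under alignment, the labels around each cycle admit a common integer dependence, which is exactly the combinatorial condition needed to make $E$ flat over $S$ and hence to make $\mathcal N := \Pic^{[0]}_{\mathcal C/S}/E$ a smooth separated group algebraic space of finite type. The N\'eron universal property is then verified using separatedness together with \cref{dim1} applied to one-parameter families $T \to S$: extension problems reduce to codimension one, where the classical theory finishes.

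The hardest step will be part ii), specifically the construction of $E$ and the proof that alignment forces it to be flat. This is the combinatorial core of the theorem: aligned cycles must be shown to assemble into a flat family of "trivial component classes", so that the component sheaf $\Phi$ of $\mathcal N$ becomes a separated quasi-finite \'etale algebraic space. For part i), the delicate point is choosing the correct one-parameter test families to detect non-alignment; this amounts to understanding how the Smith normal form of Raynaud's pairing matrix on $X_i$ depends on the labels along $\gamma$, and carefully tracking this variation back through the purity diagram between $\Phi(s)$ and $\bigoplus_i \Phi_i$. Both parts ultimately rest on a precise dictionary between the combinatorics of labelled dual graphs and the integral arithmetic of component groups in dimension one.
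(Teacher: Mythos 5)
This statement is not proved in the paper at all: it is quoted verbatim from Holmes (\citep{holmes}, Theorems 5.16 and 6.2) and used as a black box, the only internal trace of its proof being \cref{remark_construction_NM}, which records that in case ii) the N\'eron model is the quotient $\Pic^{[0]}_{\mathcal C/S}/E$ with $E$ the closure of the unit section, flat over $S$ precisely because of alignment and regularity of $\mathcal C$. Your outline for ii) coincides with that construction, so in spirit you are on the right track; but as a proof it has a genuine gap, in fact two. For ii), the entire content of the theorem is the assertion you defer: that alignment (given $\mathcal C$ regular) forces $E$ to be flat, and that the resulting quotient algebraic space satisfies the N\'eron mapping property for \emph{all} smooth test schemes $T\ra S$, not just traits. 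Your reduction ``extension problems reduce to codimension one, where \cref{dim1} finishes'' does not work as stated: N\'eron models are not compatible with arbitrary base change (the paper stresses this; only smooth base change and localization, \cref{smooth_base_change} and \cref{NMlocaliz}, are available), so restricting a section-extension problem to one-parameter subschemes of $T$ and invoking the classical theory over each trait does not by itself produce, or glue to, the required extension over $T$. This is exactly where Holmes needs the flatness of $E$ and a genuine argument, not a formal d\'evissage.

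For i), the same base-change issue undermines your test-family argument. The generization maps of \cref{injective_comps} compare $\Phi(s)$ only with the component groups $\Phi_i$ at the codimension-one points $\zeta_i$, where $\mathcal N$ localizes to the classical N\'eron model by \cref{NMlocaliz}. For a trait $B\ra S$ through the closed point $s$, the pullback $\mathcal N\times_S B$ need not be the N\'eron model over $B$, so ``the order of the $\gamma$-class in the component group of $\mathcal N_B$'' is not a well-defined invariant of the hypothetical $\mathcal N$ in the way your argument requires, and the claimed constancy across all $B$ ``by compatibility with $\Phi(s)$'' is not justified; this is precisely the delicate point, and you acknowledge but do not resolve it (Holmes's necessity proof instead extracts a contradiction from the non-flatness of the closure of the unit section, detected on suitably chosen test traits). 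So the proposal should be regarded as a plausible plan that mirrors the known strategy, with the two decisive steps — flatness of $E$ under alignment, and the precise mechanism by which non-alignment obstructs existence — left unproven; note also that within this paper there is nothing to compare it against, since the result is imported from \citep{holmes} rather than reproved.
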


\begin{remark} \label{remark_construction_NM}
The proof of \cref{holmes}, part ii) is constructive: for $\mathcal C/S$ aligned with $\mathcal C$ regular, the N\'eron model is the smooth, separated group-algebraic space of finite type
$$\mathcal N=\frac{\Pic^{[0]}_{\mathcal C/S}}{E}$$
where:
\begin{itemize}
\item $\Pic^{[0]}_{\mathcal C/S}$ is the smooth group-algebraic space representing the functor of invertible sheaves on $\mathcal C/S$ with fibres of total degree zero; equivalently, it is the schematic closure of $\Pic^{0}_{\mathcal C_U/U}$ inside $\Pic_{\mathcal C/S}$.
\item $E/S$ is the schematic closure of the unit section $U\ra \Pic^{0}_{\mathcal C_U/U}$ inside $\Pic^{[0]}_{\mathcal C/S}$.
\end{itemize}
The fact that the quotient of fppf-sheaves $\mathcal N$ exists as an algebraic space is due to the fact that under the assumptions of \cref{holmes}, part ii), $E/S$ is a flat subgroup space of $\Pic^{[0]}_{\mathcal C/S}$. See theorem 6.2 of \citep{holmes} for more details.
\end{remark}

In the case where $\mathcal C/S$ is smooth outside of a normal crossing divisor, we have the notion of toric additivity introduced in \cref{section_TA}; we are going to explore its relation with alignment.

Let's consider then a regular base scheme $S$ with a normal crossing divisor $D\subset S$, and a nodal curve $\mathcal C/S$, such that the base change $\mathcal C_U/U$ to $U=S\setminus D$ is smooth.

If $S'\ra S$ is a strict henselization at some geometric point $s$ of $S$, and $D\cap S'$ is given by regular parameters $t_1,\ldots,t_n\in \O(S')$, then the thickness of any non-smooth point $p\in \mathcal C_s$ is generated by $t_1^{m_1}\cdot\ldots\cdot t_n^{m_n}$ for some non-negative integers $m_1,\ldots,m_n$. In particular, $\mathcal C$ is regular at $p$ if and only if its thickness is generated by $t_i$ for some $1\leq i\leq n$.
\begin{lemma}\label{ta_aligned}
Suppose that $\mathcal C/S$ is toric-additive. Then it is aligned. 
\end{lemma}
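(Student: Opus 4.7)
My plan is to reduce immediately to the case where $S$ is strictly local with closed point $s$, and then to attack a purely combinatorial statement about the labelled dual graph $(\Gamma, l)$ of $\mathcal C_s$. To each edge $e \in E$ I would attach its \emph{support} $\sigma(e) \subseteq \{1,\ldots,n\}$, consisting of the indices $i$ such that $t_i$ divides a generator of $l(e)$; the argument of \cref{injective_purity} shows that $\sigma(e)$ is non-empty, and the construction of \cref{contraction_graph} identifies the edge set of the contracted graph $\Gamma_i$ with $E_i = \{e : i \in \sigma(e)\}$. The combinatorial claim I aim to establish is: \emph{if $\mathcal C/S$ is toric-additive at $s$, then every simple cycle $\gamma$ in $\Gamma$ satisfies $\sigma(e) = \{i_\gamma\}$ for all edges $e$ of $\gamma$, for a single index $i_\gamma$}. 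Granting this, every label along $\gamma$ has the form $(t_{i_\gamma}^{m(e)})$ with $m(e) \geq 1$, and the identity $(t_{i_\gamma}^a)^b = (t_{i_\gamma}^b)^a$ produces the alignment condition of \cref{def_aligned}.

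The combinatorial claim would be proved by contradiction, exploiting the fact that toric additivity makes the purity map $\phi \colon H_1(\Gamma,\Z) \to \bigoplus_i H_1(\Gamma_i,\Z)$ surjective. Fix a simple cycle $\gamma$ in $\Gamma$ and an index $i$ with $\gamma \cap E_i \neq \emptyset$; using the commutative diagram \eqref{diagram_bho} and $\partial \gamma = 0$, the restriction $\gamma \cap E_i$ automatically defines an element of $H_1(\Gamma_i,\Z)$. I would then consider $y \in \bigoplus_j H_1(\Gamma_j,\Z)$ whose $i$-th component equals $\gamma \cap E_i$ and whose remaining components vanish, and by surjectivity of $\phi$ pick $\delta \in H_1(\Gamma,\Z)$ with $\phi(\delta) = y$. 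The conditions $\delta|_{E_j} = 0$ for $j \neq i$ force $\delta$ to be supported on $\{e : \sigma(e) = \{i\}\}$, and since $\delta|_{E_i} = \delta$, the chain $\delta$ must equal $\gamma \cap E_i$ in $\Z^E$.

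The existence of such a $\delta$ in $\ker(\partial\colon \Z^E \to \Z^V)$ then imposes two constraints: (a) every edge of $\gamma$ whose support contains $i$ must have singleton support $\{i\}$, and (b) $\gamma \cap E_i$ must itself be a cycle in $\Gamma$. Since the only sub-chains of a simple cycle with vanishing boundary in $\Z^V$ are $\emptyset$ and the whole cycle, (b) together with $\gamma \cap E_i \neq \emptyset$ forces $\gamma \cap E_i = \gamma$, and combined with (a) this means every edge of $\gamma$ has singleton support $\{i\}$, finishing the claim. I expect the main difficulty to be identifying the right obstruction element in $\bigoplus_j H_1(\Gamma_j,\Z)$: the key insight is that the ``naive'' preimage $\gamma \cap E_i$ of the natural candidate $y$ fails to be a cycle in $\Gamma$ precisely when the restriction of $\gamma$ to $E_i$ is neither empty nor all of $\gamma$, which is the combinatorial manifestation of non-alignment.
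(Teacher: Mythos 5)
Your proof is correct, and it relies on the same core ingredient as the paper's argument: the surjectivity of the purity map on first homology, which is what toric additivity buys you. What genuinely differs is the packaging. The paper argues by contradiction and splits into two cases according to whether some edge of the putative non-aligned cycle has support of size at least two; you avoid the case split entirely by starting from the preimage $\delta$ of the natural target element $y=(0,\ldots,0,\gamma\cap E_i,0,\ldots,0)$ and showing directly that $\delta$ must coincide with the sub-chain of $\gamma$ supported on edges with $\sigma(e)=\{i\}$. Your observation that a non-zero proper sub-chain of a simple cycle has non-vanishing boundary does exactly the work of the paper's Case~1 (where $\alpha(\phi)$ is shown to be nonzero at the junction of two adjacent edges with different singleton supports), while the constraints $\delta|_{E_j}=0$ for $j\ne i$ dispose of the paper's Case~2 (an edge $\overline e$ with $|\sigma(\overline e)|\ge 2$). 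The uniform treatment is slightly cleaner, and as a bonus it establishes the sharper combinatorial statement that every simple cycle of $\Gamma$ is labelled entirely by powers of a single regular parameter $t_{i_\gamma}$, which strictly refines alignment; the final step $(t_{i_\gamma}^a)^b=(t_{i_\gamma}^b)^a$ with $a,b\ge 1$ then recovers the condition of \cref{def_aligned}.
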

\begin{proof}
As both alignment and toric additivity are checked over the strict henselizations at geometric points of $S$, we may assume that $S$ is strictly local, with $\{D_i\}_{i=1,\ldots,n}$ the components of the divisor $D$. Each of them is cut out by a regular element $t_i\in \O_S(S)$ and is itself a regular, strictly local scheme. Let $\zeta_i$ be the generic point of $D_i$. By \cref{split_sing_sh}, the curve $\mathcal C_{\zeta_i}$ has split singularities; its labelled graph $(\Gamma_{\zeta_i},l_{\zeta_i})$ is obtained from $(\Gamma_s,l_s)$ by contracting edges according to the procedure in \cref{labelled}; that is, by contracting the edges with label generated by an element of $\O(S)$ invertible at the point $\zeta_i$. 

We have an injective homomorphism \cref{purity_betti}
$$F\colon H_1(\Gamma_s,\Z)\ra H_1(\Gamma_1,\Z)\oplus\ldots\oplus H_1(\Gamma_n,\Z).$$

Notice that, for a general graph $G$, the choice of an orientation of the edges allows to see a cycle on $G$ as a labelling of the edges $E\ra \Z$ taking only values in $\{-1,0,1\}$, and in particular as an element of $H_1(G,\Z)$. Moreover, $H_1(G,\Z)$ is generated by elements arising from cycles of $G$. Therefore we make a choice of orientation on $\Gamma_s$, which determines orientations of the $\Gamma_i$'s as well. 

We only need to show that if $\mathcal C/S$ is not aligned, then $F$ is not surjective.

Consider the commutative diagram with exact rows

\begin{center}
\begin{tikzcd}
0 \arrow[r] & H_1(\Gamma_s,\Z) \arrow[r]\arrow[d, "F=F_1\oplus\ldots\oplus F_n"] & \Z^{E} \arrow[r, "\alpha"]\arrow[d] & \Z^V \arrow[d] \\
0 \arrow[r] & \bigoplus_iH_1(\Gamma,\Z_i) \arrow[r] & \bigoplus_i\Z^{E_i} \arrow[r,"\beta"] & \bigoplus_i \Z^{V_i}.
\end{tikzcd}
\end{center}
with maps as described in \cref{diagram_bho}.
The central vertical map is injective. If we call $A=\ker(\im \alpha \ra\im \beta), B=\coker F, C=\coker(\Z^E\ra\bigoplus_i \Z^{E_i}), D=\coker(\Z^V\ra\bigoplus_i \Z^{V_i})$, we have an exact sequence
$$0\ra A\ra B\ra C\ra D$$
by the Snake Lemma.  

Assume that $\mathcal C/S$ is not aligned. This means that in the labelled graph $(\Gamma_s,l_s)$ over the closed point $s\in S$, there is a cycle $\gamma$ with two edges $e_1,e_2$ such that $l_s(e_1)^m\neq l_s(e_2)^{m'} $ for all $m,m'$ non-negative integers. 

There are two possibilities:
\begin{itemize}
\item[Case 1:] for every edge $e\in \gamma$ there exists an $i$ and an $m$ such that $l_s(e)=t_i^m$. Assume this is the case. Then there are two adjacent edges $e_1,e_2$, such that (without loss of generality) the label of $e_1$ is a power of $t_1$ and the label of $e_2$ is a power of $t_2$. Let's see $\gamma$ as an edge labelling in $H_1(\Gamma_s,\Z)$, and let $\phi\in\Z^E$ be the edge labelling described by 
$$ \phi(e)=
\begin{cases}
\gamma(e) \mbox{ if } l_s(e) \mbox{ is a power of }t_1 \\
0 \mbox{ otherwise.}
\end{cases}
$$
In particular $\phi(e_1)=\pm 1$ and $\phi(e_2)=0$. Let $v\in V$ be a vertex lying on the cycle $\gamma$ and joining $e_1$ and $e_2$. Then $\alpha(\phi)$ is non-zero at $v$. On the other hand, in $\Gamma_1$ all edges of $\gamma$  the label of which is not a power of $t_1$ are contracted; hence the image of $\phi$ via the central vertical map is actually contained in $\bigoplus H_1(\Gamma_i,\Z)$. In particular, by the commutativity of the diagram $\alpha(\phi)$ is sent to zero by $\Z^V\ra \bigoplus \Z^{V_i}$. But then $\alpha(\phi)$ is a non-zero element of $A$, which implies that $B\neq 0$ and that $F$ is not surjective.

\item[Case 2:] there exists some edge $\overline e$ in $\gamma$ that is preserved in at least two of the contracted graphs $\Gamma_i$'s. We may assume these are $\Gamma_1$ and $\Gamma_2$. In this case, we define an element $\phi=(\phi_1,\ldots,\phi_n)\in\bigoplus H_1(\Gamma_i,\Z)$ as follows: $\phi_1$ is the (non-trivial) cycle image of $\gamma$ in $\Gamma_1$; and for $2\leq i\leq n$, $\phi_i=0$. Hence $\phi$ is a non-zero element of $\bigoplus H_1(\Gamma_i,\Z)$. Now let $\psi$ be any element of $H_1(\Gamma_s,\Z)$; if $\psi(\overline e)=0$ then $F_1(\psi)\neq \phi_1$, and if $\psi(\overline e)\neq 0$, then $F_2(\psi)\neq \phi_2$. Therefore $\phi$ is not in the image of $F$, and the proof is finished.
\end{itemize} 

\end{proof}

\begin{lemma} \label{regular-aligned-TA}
Suppose that the total space $\mathcal C$ is regular.
Then $\mathcal C/S$ is aligned if and only if $\mathcal C/S$ is toric-additive.
\end{lemma}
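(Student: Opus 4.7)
By \cref{ta_aligned}, toric-additive already implies aligned, so the work is in the reverse direction; assume $\mathcal C$ is regular and $\mathcal C/S$ is aligned. Since both conditions are tested on strict henselizations at geometric points of $S$, we may pass to the strict local case and assume $S'=\Spec\O^{sh}_{S,s}$ with $D\cap S'$ cut out by a regular sequence $t_1,\ldots,t_n$. The plan is to verify condition (iv) of \cref{def_ta_local}, i.e.\ that $h_1(\Gamma_s,\Z)=\sum_{i=1}^n h_1(\Gamma_i,\Z)$; combined with \cref{injective_purity} and \cref{tf_coker}, this forces the purity map to be an isomorphism.

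First I would extract the combinatorial picture. Because $\mathcal C$ is regular, the thickness of every node is generated by a regular parameter, and the only regular parameters lying in $(t_1\cdots t_n)\cdot\O^{sh}_{S,s}$ are (up to units) the individual $t_i$'s. So each edge $e\in E(\Gamma_s)$ receives a single well-defined label $t_{\sigma(e)}$ with $\sigma\colon E\to\{1,\ldots,n\}$. Because the $t_i$ generate distinct prime ideals, the relation $(t_i)^n=(t_j)^{n'}$ for positive $n,n'$ forces $i=j$, so alignment reads: the function $\sigma$ is constant on every simple cycle of $\Gamma_s$.

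Next I would introduce, for each $i$, the spanning subgraph $\Gamma_s^{(i)}\subseteq\Gamma_s$ with edge set $\sigma^{-1}(i)$ (and its complement $\Gamma_s^{(\neq i)}$ with edge set $\sigma^{-1}(\{1,\ldots,n\}\setminus\{i\})$). The key claim is the decomposition
\[ H_1(\Gamma_s,\Z)\;=\;\bigoplus_{i=1}^n H_1\bigl(\Gamma_s^{(i)},\Z\bigr). \]
Since $E(\Gamma_s)=\bigsqcup_i E(\Gamma_s^{(i)})$, the natural map $\bigoplus_i C_1(\Gamma_s^{(i)})\to C_1(\Gamma_s)$ is an isomorphism and restricts to an injection on cycle spaces. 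For surjectivity I would use the standard fact that $Z_1$ of a graph is generated by its simple cycles; alignment then forces each simple cycle of $\Gamma_s$ to lie in exactly one $\Gamma_s^{(i)}$, giving the decomposition (and the same argument applied to $\Gamma_s^{(\neq i)}$ yields $h_1(\Gamma_s^{(\neq i)},\Z)=\sum_{j\neq i}h_1(\Gamma_s^{(j)},\Z)$). This is the main conceptual step of the proof.

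Finally, because $(t_j)\not\subseteq(t_i)$ for $j\neq i$, \cref{contraction_graph} identifies $\Gamma_i$ with the contraction $\Gamma_s/\Gamma_s^{(\neq i)}$ of the spanning subgraph $\Gamma_s^{(\neq i)}$. The elementary Euler-characteristic identity
\[ h_1(G/H,\Z)\;=\;h_1(G,\Z)-h_1(H,\Z) \]
valid for any spanning subgraph $H\subseteq G$ (check: $|V(G/H)|=c(H)$, $|E(G/H)|=|E(G)|-|E(H)|$, $c(G/H)=c(G)$), applied with $G=\Gamma_s$ and $H=\Gamma_s^{(\neq i)}$ yields
\[ h_1(\Gamma_i,\Z)=h_1(\Gamma_s,\Z)-h_1(\Gamma_s^{(\neq i)},\Z)=\sum_j h_1(\Gamma_s^{(j)},\Z)-\sum_{j\neq i}h_1(\Gamma_s^{(j)},\Z)=h_1(\Gamma_s^{(i)},\Z). \]
Summing over $i$ and using the decomposition again gives $\sum_i h_1(\Gamma_i,\Z)=h_1(\Gamma_s,\Z)$, i.e.\ condition (iv) of \cref{def_ta_local}, which together with the corollary to \cref{tf_coker} proves toric additivity. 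The main potential pitfall is the surjectivity in the decomposition step: one must resist the temptation to lift individual cycles of $\Gamma_i$ back to $\Gamma_s^{(i)}$ (which need not be possible since vertices get identified in the contraction), and instead work at the level of $Z_1(\Gamma_s)$ where alignment forces each simple cycle to have a single label.
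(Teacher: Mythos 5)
Your proof is correct, and it takes a genuinely different route from the paper's. The paper verifies surjectivity of the purity map directly: given a cycle $\gamma$ in $\Gamma_i$, it lifts the edges of $\gamma$ back to $\Gamma_s$ and argues, via alignment, that they must in fact be mutually adjacent there and hence close up into a cycle of $\Gamma_s$. (The hidden content: the closed walk obtained by inserting $\Gamma_s^{(\neq i)}$-paths between consecutive lifted edges is automatically \emph{simple}, since the inserted paths lie in pairwise distinct contraction classes; alignment then forces those inserted paths to be trivial.) You avoid the lifting altogether by arguing numerically. You prove the decomposition $H_1(\Gamma_s)=\bigoplus_i H_1(\Gamma_s^{(i)})$ from the fact that alignment makes each simple cycle monochromatic, you identify $\Gamma_i$ with $\Gamma_s/\Gamma_s^{(\neq i)}$ via \cref{contraction_graph}, and you apply the Euler-characteristic identity $h_1(G/H)=h_1(G)-h_1(H)$ for spanning subgraphs $H$ to get $h_1(\Gamma_i)=h_1(\Gamma_s^{(i)})$; summing over $i$ gives condition (iv) of \cref{def_ta_local}, and the corollary to \cref{tf_coker} upgrades the rank equality to an isomorphism of the purity map. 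The trade-off is that your route is a bit longer (you need the spanning-subgraph Euler identity and the decomposition applied also to $\Gamma_s^{(\neq i)}$), but it sidesteps the step that looks delicate at first reading — lifting cycles through a contraction — which you correctly flag as the main pitfall. Both arguments ultimately lean on the same combinatorial fact, namely that regularity plus alignment make each simple cycle of $\Gamma_s$ carry a single label $t_i$.
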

\begin{proof}
Once again we may assume that $S$ is strictly local. Let $\Gamma_s=(V_s,E_s)$ be the dual graph of the fibre of $\mathcal C$ over the closed point $s\in S$, and $l_s\colon E_s\rightarrow L_s$ the labelling of the edges, taking value in the monoid $L_s$ of principal ideals of $\O_S(S)$. Since $\mathcal C$ is regular, the labels can only take the values $(t_1),\ldots,(t_n)\in L_s$. This means that $\mathcal C/S$ is aligned if and only if every cycle of $\Gamma$ has edges with the same label.

Now, let $\{D_i\}_{i=1,\ldots,n}$ be the components of the divisor $D$. Each of them is cut out by a regular element $t_i\in \O_S(S)$ and is itself a regular, strictly local scheme. Let $\zeta_i$ be the generic point of $D_i$. By \cref{split_sing_sh}, the curve $\mathcal C_{\zeta_i}$ has split singularities; its labelled graph $(\Gamma_{\zeta_i},l_{\zeta_i})$ is obtained from $(\Gamma_s,l_s)$ by contracting edges according to the procedure in \cref{labelled}; that is, by contracting the edges with label different from $(t_i)$. 

We have already proved in \cref{ta_aligned} that toric additivity implies alignment.
Assume that $\mathcal C/S$ is aligned; we want to show that the injective homomorphism 
$$F\colon H_1(\Gamma_s,\Z)\ra H_1(\Gamma_1,\Z)\oplus\ldots\oplus H_1(\Gamma_n,\Z)$$
is surjective.

We choose an orientation of the edges of $\Gamma_s$, so we can associate to every cycle $\gamma$ in $\Gamma_s$ an element $\gamma\in H_1(\Gamma_s,\Z)$.

Fix $1\leq i \leq n$, let $\gamma$ be a cycle in $\Gamma_i$, and $(0,0,\ldots,0,\gamma,0,\ldots,0)\in \bigoplus_iH_1(\Gamma_i,\Z)$ the corresponding element. The edges of $\gamma$, when seen in $\Gamma_s$, have label $(t_i)$. Moreover, they are adjacent, by the hypothesis of  alignment. Hence they still form a cycle in $\Gamma_s$ which is sent to $(0,\ldots,0,\gamma,0,\ldots,0)$ via the map $F$. We conclude that $F$ is surjective.

\end{proof}

\subsection{Toric additivity and desingularization of curves}
Let $S$ be a regular base scheme $S$ with a normal crossing divisor $D\subset S$, and let $\mathcal C/S$ be a nodal curve, such that the base change $\mathcal C_U/U:=S\setminus D$ is smooth. 

In \citep[3.6]{alterations}, it is proven that if $\mathcal C\ra S$ has \textit{split} fibres, there exists a blow-up $\phi\colon \mathcal C'\ra \mathcal C$ such that $\mathcal C'\ra S$ is still a nodal curve, and $\mathcal C'$ is \textit{regular}. The condition of splitness implies that the irreducible components of the geometric fibres are smooth; or equivalently, that the dual graphs of the geometric fibres do not admit loops. We are going to introduce a condition on $\mathcal C/S$, weaker than splitness, and show that a statement analogous to the one in \citep[3.6]{alterations} holds for curves satisfying this condition.

\begin{definition}\label{def_disciplined}
Let $\mathcal C\ra S$ be a nodal curve. We say that $\mathcal C/S$ is \textit{disciplined} if, for every geometric point $\overline s$ of $S$, and $p\in \mathcal C^{ns}_{\overline s}$, at least one of the following is satisfied:
\begin{itemize}
\item[i)] $p$ belongs to two irreducible components of $\mathcal C_{\overline s}$;
\item[ii)] the thickness of $p$ is generated by a power of a regular parameter of $\O_{S,\overline s}^{sh}$.
\end{itemize}
\end{definition}

\begin{example}\label{ex_not_disc}
Let $k$ be a field, $S=\Spec k[[u,v]]$, $U=S\setminus \{uv=0\}$. Let $\mathcal E/S$ be the family of nodal curves of arithmetic genus one 
$$y^2-x^3-x^2-uv=0$$
smooth over $U$.

The family $\mathcal E/S$ is not disciplined: the point $p=(x=0,y=0,u=0,v=0)$ belongs to only one component of the fibre over $u=0,v=0$, and its thickness is the ideal $(uv)$.
\end{example}

We give first an auxiliary lemma:
\begin{lemma}\label{disc_graph}
Hypothesis as in the beginning of the subsection. Suppose that $S$ is strictly local; write $D=D_1\cup \ldots \cup D_n$ and write $\zeta_i$ for the generic point of $D_i$. Suppose that $\mathcal C/S$ is disciplined. Let $p\in \mathcal C_s^{ns}$ be a non-smooth point of the fibre over the closed point, such that $p$ does not satisfy condition ii) of \cref{def_disciplined}. Let $X_1,X_2$ be the distinct irreducible components of the closed fibre $\mathcal C_s$ containing $p$. Then there exists $i\in\{1,\ldots,n\}$ and $Y_1,Y_2$ irreducible components of $\mathcal C_{\zeta_i}$ whose schematic closures $\overline Y_1,\overline Y_2\subset \mathcal C$ satisfy:  $X_1\not\subset \overline Y_2\supset X_2$ and $X_2\not\subset \overline Y_1\supset X_1$. 
\end{lemma}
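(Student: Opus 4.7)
The plan is to work with the formal-local structure at $p$ and then use the disciplined hypothesis at codimension-two strata to find the desired global components $Y_1, Y_2$.

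Since $p$ fails (ii), its thickness ideal $(\alpha) \subset \O^{sh}_{S,s}$ is not a power of a regular parameter; choosing local coordinates $t_1, \ldots, t_n$ cutting out $D$, write $\alpha = t_1^{m_1} \cdots t_n^{m_n}$ (up to a unit). Failure of (ii) forces $|I| \geq 2$, where $I := \{i : m_i > 0\}$. For any $i \in I$, the thickness of $p$ becomes $(t_i^{m_i})$ in $\O^{sh}_{S,\zeta_i}$, still non-trivial, so $p$ persists as a node of $\mathcal C_{\zeta_i}$; by \cref{thickness_lemma} the formal local ring is $k(\zeta_i)[[x,y]]/(xy)$, exhibiting two analytic branches. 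Let $Y_1, Y_2$ be the irreducible components of $\mathcal C_{\zeta_i}$ into which these two branches extend globally (a priori possibly equal). Because condition (i) of the disciplined hypothesis holds at $s$ for $p$, the two analytic branches at $p$ in $\mathcal C_s$ correspond precisely to the distinct components $X_1, X_2$; hence each $X_\ell$ locally coincides with a branch of $\overline{Y_\ell}$, and irreducibility of $X_\ell$ yields $X_\ell \subseteq \overline{Y_\ell}$.

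The heart of the argument is arranging that $Y_1 \neq Y_2$ for some $i \in I$. I would proceed by contradiction: suppose $Y_1 = Y_2$ for every $i \in I$. Then for each such $i$ the generization map $V_s \to V_{\zeta_i}$ identifies $X_1$ and $X_2$, so there is a path $\gamma_i$ in $\Gamma_s$ from $X_1$ to $X_2$ using only edges whose label is not divisible by $t_i$ (i.e., edges contracted in $\Gamma_{\zeta_i}$). Picking $i, j \in I$ distinct, the aim is to combine $\gamma_i$ and $\gamma_j$ into a single path avoiding both $t_i$ and $t_j$. At the generic point $\sigma$ of $D_i \cap D_j$, the node $p$ persists with thickness $(t_i^{m_i} t_j^{m_j})$, which again fails (ii); the disciplined hypothesis then forces (i) at $\sigma$, so $p$ belongs to two distinct components of $\mathcal C_\sigma$. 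This contradicts any path identifying $X_1$ with $X_2$ in $\Gamma_\sigma$. The delicate step is producing the combined path, which I expect to address by a minimality argument on path length or an induction on $|I|$, exploiting that at most one of $t_i, t_j$ can appear among the labels of edges in a minimal counterexample.

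Finally, once $i \in I$ is chosen so that $Y_1 \neq Y_2$, the non-inclusions $X_1 \not\subset \overline{Y_2}$ and $X_2 \not\subset \overline{Y_1}$ follow from the fact that the generization map $V_s \to V_{\zeta_i}$ is a well-defined function: each $X_\ell$ is contained in the schematic closure of a unique component of $\mathcal C_{\zeta_i}$, which is $Y_\ell$ by construction.
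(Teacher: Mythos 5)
Your approach mirrors the paper's almost exactly: you assume for contradiction that $Y_1=Y_2$ (equivalently $v_1=v_2$ in $\Gamma_{\zeta_i}$) for every relevant $i$, and aim to contradict disciplinedness at the generic point $\zeta_{ij}$ of a codimension-two stratum $D_i\cap D_j$. Both arguments then require a single path from $v_1$ to $v_2$ in $\Gamma_s\setminus\{e(p)\}$ whose edges are contracted \emph{simultaneously} in $\Gamma_{\zeta_i}$ and $\Gamma_{\zeta_j}$ (labels avoiding both $t_i$ and $t_j$): the paper asserts the corresponding cycle $\gamma$ exists without justification (``we deduce that there exists a cycle $\gamma$\ldots''), while you correctly flag this as the delicate step and propose a minimality or induction argument to fill it.

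That step cannot be filled as proposed, because the combined path need not exist. Consider the genus-two hyperelliptic family $y^2 = (x^2 - st)\bigl((x-1)^2 - t\bigr)\bigl((x-2)^2 - s\bigr)$ over $S=\Spec k[[s,t]]$ with $D=\{st=0\}$. The closed fibre consists of the two rational branches $y=\pm x(x-1)(x-2)$ meeting at three split nodes with thicknesses $(st)$, $(t)$, $(s)$, so $\Gamma_s$ has two vertices $v_1,v_2$ joined by three edges. Over $\zeta_1$ (generic point of $\{s=0\}$) the $(t)$-labelled edge is contracted and the fibre is irreducible (two branches merge), so $v_1=v_2$ in $\Gamma_1$; symmetrically $v_1=v_2$ in $\Gamma_2$. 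The family is disciplined: at $s$ the node with thickness $(st)$ lies on both components, and at $\zeta_1,\zeta_2$ every node has thickness generated by a single regular parameter. Yet $\Gamma_1$ and $\Gamma_2$ each have only one vertex, so there is no index $i$ and no $Y_1\ne Y_2$ as in the conclusion, and there is no path from $v_1$ to $v_2$ avoiding both $s$-divisible and $t$-divisible labels. The ``delicate step'' is therefore a genuine gap in both your argument and the paper's, and the lemma as stated appears to need either a stronger hypothesis or an essentially different proof.
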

\begin{proof}

Let $(\Gamma_s,l_s)$ be the labelled graph of $\mathcal C_s$. By hypothesis, the edge $e(p)$ corresponding to $p$ has distinct extremal vertices, $v_1$ and $v_2$, and label $t_1^{m_1}\cdot\ldots\cdot t_l^{m_l}$, with $2\leq l\leq n$ and $m_1,\ldots,m_l\geq 1$. The fibres over the generic points $\zeta_1,\ldots,\zeta_n$ have split singularities by \cref{split_sing_sh}, so we can consider their labelled graphs $(\Gamma_i,l_i)$. What we want to prove is that there exists $i\in\{1,\ldots,l\}$ such that $v_1$ and $v_2$ are mapped to distinct vertices of $(\Gamma_i,l_i)$ via the procedure described in \cref{labelled}. 

Suppose the contrary. As $e(p)$ is not contracted in any $\Gamma_i$, $1\leq i\leq l$, we deduce that there exists a cycle $\gamma$ in $\Gamma_s$, containing $e(p)$, such that for all $1\leq i\leq l$, all edges $e\neq e(p)$ of $\gamma$ are contracted in $\Gamma_i$. Let $\zeta_{12}$ be the generic point of $D_1\cap D_2$; all edges $e\neq e(p)$ of $\gamma$ are contracted in $\Gamma_{12}$, the labelled graph of $\mathcal C_{\zeta_{12}}$, and in particular $v_1$ and $v_2$ are mapped to the same vertex. The edge $e(p)$ is therefore mapped to a loop in $\Gamma_{12}$, with label $t_1^{m_1}t_2^{m_2}$. However, this contradicts the fact that $\mathcal C\ra S$ is disciplined at $\zeta_{12}$, and we have obtained a contradiction.
\end{proof}

We introduce now some notation: given a scheme $X$, we will denote by Sing$(X)\subseteq X$ the set of points that are not regular. We say that the \textit{center} of a blow-up $\pi\colon Y\ra X$ is the complement of the largest open $U\subset X$ such that $\pi^{-1}(U)\ra U$ is an isomorphism.

\begin{lemma}\label{disc->reg}
Hypotheses as in the beginning of the subsection.
Suppose $f\colon \mathcal C\ra S$ is disciplined. Then there is an \'etale surjective $g\colon S'\ra S$ and a blow-up $\phi\colon \mathcal C' \ra \mathcal C\times_SS'$ such that 
\begin{itemize}
\item the center of $\phi$ is contained in Sing$(\mathcal C\times_SS')$;
\item $\mathcal C'$ is a nodal curve over $S'$, smooth over $g^{-1}(U)$;
\item $\mathcal C'$ is regular.
\end{itemize}
\end{lemma}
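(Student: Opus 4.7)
The overall strategy is to follow the line of argument of \citep[3.6]{alterations}, which proves the analogous statement under the stronger hypothesis that the fibres are split, and to adapt it to handle the one extra type of local singularity that a disciplined curve may have: a loop in the dual graph whose edge is labelled by a pure power of a regular parameter.

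First I would perform the étale localization that produces $g\colon S'\ra S$. Since $\mathcal C^{ns}\ra S$ is finite and unramified and the disciplined hypothesis together with \cref{thicknessZariski} (applied at each point where the fibre has split singularities, i.e.\ after an initial \'etale base change) guarantees that each thickness is generated by an element of the Zariski local ring on $S$, an \'etale surjective $g\colon S'\ra S$ can be chosen with the following properties: $D_{S'}=D\times_S S'$ is strict normal crossing with components $D_1,\ldots,D_n$ globally cut out by regular parameters $t_1,\ldots,t_n\in\mathcal O(S')$; the scheme $(\mathcal C\times_S S')^{ns}$ is a disjoint union of closed immersions $S'\hra \mathcal C\times_S S'$; the irreducible components of each geometric fibre that pass through any such section are geometrically irreducible; and each section has thickness generated on $S'$ by a single monomial $t_1^{m_1}\cdots t_n^{m_n}$.

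Next I would work one singular section at a time. The singular locus of $\mathcal C\times_S S'$ consists of those sections $p$ whose thickness monomial is not a single $t_i$. There are two possibilities given by the disciplined hypothesis. Case (i): \cref{disc_graph} produces, from a node $p$ between two distinct components $X_1,X_2$ of $\mathcal C_{s}$, an index $i$ and components $Y_1,Y_2$ of $\mathcal C_{\zeta_i}$ whose schematic closures $\overline{Y_1},\overline{Y_2}\subset \mathcal C\times_S S'$ are distinct Weil divisors through $p$. I would blow up $\overline{Y_1}$. A direct computation on the completed strictly henselian ring
\[
\widehat{\mathcal O}^{sh}_{\mathcal C\times_S S',p}\cong R[[x,y]]/(xy-t_1^{m_1}\cdots t_l^{m_l}),
\]
in which the ideal of $\overline{Y_1}$ is $(x,t_1)$, shows that in the chart $x=t_1\xi$ the strict transform is $\xi y=t_1^{m_1-1}t_2^{m_2}\cdots t_l^{m_l}$ (still nodal, with thickness exponent vector strictly smaller in the lex order), while the other chart is smooth over $S'$; outside the center nothing changes. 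Iterating these blow-ups on a given section strictly reduces the thickness exponents and eventually either resolves $p$ or turns it into a case (ii) point.

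Case (ii): the thickness is $(t_i^k)$ for some $k\geq 2$ and a single regular parameter $t_i$. Although $p$ lies on a single geometric component of $\mathcal C_s$, the Cartier divisor $\{t_i=0\}\cap(\mathcal C\times_S S')$ has two distinct branches through $p$ (visible on the completion), and its scheme-theoretic closure contains a well-defined Weil divisor $\overline Y$ through one of them. Blowing up $\overline Y$ (locally the ideal $(x,t_i)$ in $R[[x,y]]/(xy-t_i^k)$) yields in the nontrivial chart $\xi y=t_i^{k-1}$, strictly reducing $k$; iterating finitely many times gives $k=1$, i.e.\ regularity at $p$.

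The main obstacle I expect is the bookkeeping needed to check that the blow-ups globalize: one must verify that the Weil divisor $\overline{Y_1}$ (resp.\ $\overline Y$) exists as an actual subscheme of $\mathcal C\times_S S'$ (not merely on the completion), which is why the étale localization in the first step is arranged so that these components of $\mathcal C_{\zeta_i}$ are geometrically irreducible and extend over $S'$. Once this is in place, the fact that each blow-up has center inside the singular locus, preserves the nodal property and the smoothness over $g^{-1}(U)$, and strictly decreases a well-ordered measure of badness (e.g.\ the multiset of thickness exponents, ordered lexicographically and summed over singular sections), ensures that after finitely many steps one arrives at a regular nodal model $\mathcal C'/S'$ with all the required properties.
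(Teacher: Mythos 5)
Your case (i) argument matches the spirit of the paper's construction, and apart from a small slip (in the chart $t_1 = x\eta$ the strict transform is \emph{regular} but not \emph{smooth} over $S'$ --- it is still a nodal family with thickness $t_1$) it is sound. The gap is in case (ii). When the thickness at a singular section $p$ is $t_i^k$ with $k\geq 2$ and $p$ lies on a \emph{single} irreducible component of the geometric fibre (a loop in the dual graph), the two branches of the Cartier divisor $\{t_i=0\}$ visible on the completion $R[[x,y]]/(xy-t_i^k)$ do \emph{not} come from two distinct Weil divisors of $\mathcal C\times_S S'$. Indeed, since contraction of a labelled graph cannot un-loop an edge, the generization of $p$ in $\mathcal C_{\zeta_i}$ also lies on a single component, so $\mathcal C_{D_i}$ has a \emph{unique} irreducible component through $p$, which only splits into two branches after completion (exactly as a nodal cubic is irreducible but formally reducible at its node). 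Consequently the Weil divisor $\overline Y$ you want to blow up does not exist, and your case (ii) step cannot be globalized.

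The paper avoids this precisely by applying \citep[3.3.2]{alterations} \emph{before} any Weil-divisor blow-ups: that lemma replaces $\mathcal C$ by a blow-up (with centers inside the singular locus) so that $\mathrm{Sing}(\mathcal C)$ has codimension $\geq 3$, which forces every thickness monomial $t_1^{m_1}\cdots t_l^{m_l}$ at a singular section to have $l\geq 2$ and all $m_i=1$. In particular the troublesome $t_i^k$ with $k\geq 2$ is eliminated at the outset, and the disciplined hypothesis (via \cref{disc_graph}) then supplies genuine divisors through every remaining singular section, putting you in your case (i) throughout. The paper also does not carry out a section-by-section induction with a well-ordered measure; it blows up \emph{all} irreducible components of $\mathcal C_D$ in one pass and then derives a contradiction from \cref{disc_graph} if the result were still singular. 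Your plan can be repaired by inserting the codimension-$\geq 3$ reduction as a first step; as written, however, case (ii) is a genuine gap.
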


\begin{proof}
First, notice that the order in which the blow-ups of the curve and the \'etale covers of the base are taken does not matter, as blowing-up commutes with \'etale base change.
After replacing $S$ by a suitable \'etale cover, we may assume that $D$ is a strict normal crossing divisor. We can now apply \citep[3.3.2]{alterations} and assume that Sing$(\mathcal C)\subset \mathcal C$ has codimension at least $3$. 

Next, we claim that there exists an \'etale cover $S'\ra S$, such that for every point $s'\in S'$, the irreducible components of $\mathcal C_{s'}$ are geometrically irreducible. To prove the claim, let's take $s\in S$. Replacing $S$ by an \'etale neighbourhood of $s$ in $S$ we may assume that $\mathcal C/S$ admits sections $\sigma_1,\ldots,\sigma_r$ through the smooth locus of $\mathcal C/S$ and intersecting every irreducible component of the fibre of $\mathcal C_s$. Now, the sheaf $\mathcal F:=\mathcal O(\sigma_1+\ldots+\sigma_r)$ is ample over $s$. Since ampleness is an open condition there exists $U\subset S$ open neighbourhood of $s$ where $\mathcal F$ is ample. Then for every point $u\in U$, every irreducible component of $\mathcal C_u$ is met by a section $\sigma_i$, and is therefore geometrically irreducible. This proves the claim.

Hence, replacing $S$ by a suitable \'etale cover, we may assume that for every generic point $\zeta$ of $D$, the fibre $\mathcal C_{\zeta}$ has irreducible components that are geometrically irreducible.

% After a further \'etale cover, we can also assume that the fibres of $\mathcal C/S$  have split singularities (and in particular irreducible components geometrically irreducible). 

 Now, let $E$ be an irreducible component of $\mathcal C_{D}=\mathcal C\times_SD$ and let $\pi\colon \mathcal C'\ra \mathcal C$ be the blow-up of $\mathcal C$ along $E$. If $p\in E$ is a regular point of $\mathcal C$, $f$ is an isomorphism at $p$, because $E$ is cut out by one equation. Otherwise, the completion of the strict henselization at (a geometric point lying over) $p$ is of the form 
$$\widehat{\O}_{\mathcal C,\overline p}^{sh}\cong\frac{\widehat{\O}_{S,\overline{f(p)}}^{sh}[[x,y]]}{xy-t_1^{m_1}\cdot\ldots \cdot t_l^{m_l}}$$
with $t_1,\ldots,t_n$ regular parameters cutting out $D$, $1\leq l\leq n$ and positive integers $m_1,\ldots,m_l$. In fact, because the singular locus has codimension at least three, we have $l\geq 2$, and $m_1=\ldots=m_l=1$. 

The ideal of the pullback of $E$ to $\widehat{\O}_{\mathcal C,\overline p}^{sh}$ is either $(t_i)$ for some $1\leq i\leq l$, or one between $(x,t_i)$ and $(y,t_i)$ for some $1\leq i \leq l$. In the first case, $\pi$ is an isomorphism at $p$. In the second case, one can compute explicitly the blowing up of $\Spec \O^{sh}_{\mathcal C,\overline p}$ at the ideal $(x,t_i)$ (or $(y,t_i)$) and find that $f'\colon \mathcal C'\ra S$ is still a nodal curve, disciplined, with Sing$(\mathcal C)$ of codimension at least three, and such that for every generic point $\zeta$ of $D$ the fibre $\mathcal C_{\zeta}$ has irreducible components that are geometrically irreducible. We omit the explicit computations. 

 Let $Y\subset \mathcal C$ be the center of $\pi\colon \mathcal C'\ra \mathcal C$. Then $Y$ consists only of non-regular points, hence it has codimension at least $3$. As $f\colon \mathcal C'\ra S$ is a curve, the fibres of $\pi$ have dimension at most $1$, hence $\pi^{-1}(Y)$ has codimension at least $2$ in $\mathcal C$. It follows that there is a bijection between the irreducible components of $\mathcal C_{D}$ and $\mathcal C'_{D}$, given by taking the preimage under $\pi$. Now, $\pi^{-1}(E)$ is a divisor, and for any other irreducible component $E'$ of $\mathcal C_D$ that is a divisor, $\pi^{-1}(E')$ is also a divisor. We conclude that $\pi^*\colon \mathcal C^*\ra \mathcal C$, the composition of the blowing-ups of all irreducible component of $\mathcal C_{D}$, is such that every component of $\mathcal C^*_D$ is a divisor. 
  Besides, as previously noticed, $f^*\colon \mathcal C^*\ra S$ is a nodal curve, disciplined, and Sing$(\mathcal C^*)$ has codimension at least three.

Assume now by contradiction that Sing$(\mathcal C^*)\neq \emptyset$, and let $p\in $ Sing$(\mathcal C^*)$. Then without loss of generality the thickness at $p$ is $(t_1\cdot\ldots\cdot t_l)$ for some $2\leq l\leq n$. Consider the base change $\mathcal C^*_T/T$, where $T$ is the spectrum of some strict henselization at $s=f^*(p)$. For every $i$ let $\xi_i$ be the generic point of $D_i\cap T$. By \cref{disc_graph}, for some $i\in\{1,\ldots,l\}$, there are distinct components $Y_1,Y_2$ of $\mathcal C^*_{\xi_i}$ whose closure in $\mathcal C^*_{T\cap D_i}$ contain $p$. Because the irreducible components of the fibre $ \mathcal C^*_{\zeta_i}$ are geometrically irreducible, there are components $X_1,X_2$ of $\mathcal C^*_{\zeta_i}$ whose closures $E_1,E_2$ in $\mathcal C^*_{D_i}$ contain $p$. But then $E_1$ and $E_2$ are given by $(x,t_1)$ and $(y,t_1)$ in $\widehat{\O}_{\mathcal C^*,\overline p}^{sh}$. In particular, they are not divisors. This is a contradiction, and therefore Sing$(\mathcal C^*)=\emptyset$.
\end{proof}

\begin{lemma}\label{TA->disc}
Hypotheses as in the beginning of the subsection. Suppose that $f\colon \mathcal C \ra S$ is toric-additive. Then $\mathcal C/S$ is disciplined.
\end{lemma}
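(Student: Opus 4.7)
The plan is to prove the contrapositive: if $\mathcal C/S$ fails to be disciplined at some geometric point $\overline s$, then it is not toric-additive there. Since both properties are tested after strict henselization, I would first replace $S$ by $\Spec \O^{sh}_{S,\overline s}$, so that $S$ becomes strictly local with closed point $s$ and $D = D_1 \cup \cdots \cup D_n$ is cut out by regular parameters $t_1,\ldots,t_n \in \O(S)$. By failure of \cref{def_disciplined} we obtain a node $p \in \mathcal C_s^{ns}$ such that $p$ lies on a single irreducible component of $\mathcal C_s$ (equivalently, the edge $e(p) \in E_s$ is a loop at some vertex $v \in V_s$), and whose thickness is of the form $(t_1^{m_1}\cdots t_l^{m_l})$ with $l \geq 2$ and each $m_i \geq 1$ (after renumbering).

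The key combinatorial input is the behaviour of this loop under the contractions $\Gamma_s \to \Gamma_i$ of \cref{contraction_graph}: since the label of $e(p)$ is contained in $(t_i)$ iff $i \in \{1,\ldots,l\}$, the edge $e(p)$ survives as a loop in each of $\Gamma_1,\ldots,\Gamma_l$ (both endpoints being the image of $v$) and is removed in $\Gamma_{l+1},\ldots,\Gamma_n$. Moreover, from the description of the left vertical map in \cref{diagram_bho} as restriction along the inclusion $E_i \subseteq E_s$, the coefficient of $e(p)$ in the image of any cycle $\phi \in H_1(\Gamma_s,\Z) \subseteq \Z^{E_s}$ inside $\Z^{E_i}$ is simply $\phi(e(p))$, the same integer for every $i \in \{1,\ldots,l\}$.

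With this set-up, I would exhibit a concrete element of $\bigoplus_i H_1(\Gamma_i,\Z)$ that is not in the image of the purity map \cref{purity_betti}: namely $\alpha = ([e(p)], 0, \ldots, 0)$, where $[e(p)] \in H_1(\Gamma_1,\Z)$ is the class of the loop $e(p)$ (a bona fide cycle, since its boundary is $v-v=0$, and nonzero since $e(p)$ is a basis vector of $\Z^{E_1}$). If some $\phi \in H_1(\Gamma_s,\Z)$ mapped to $\alpha$, then the first coordinate would force $\phi(e(p)) = 1$ while the second coordinate (using $l \geq 2$) would force $\phi(e(p)) = 0$, a contradiction. Hence the purity map fails to be surjective, and by criterion i) of \cref{def_ta_local} the curve $\mathcal C/S$ is not toric-additive at $s$, contradicting the hypothesis.

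The main step requiring care is not combinatorial difficulty but the correct identification of the purity map on $1$-chains as the restriction $\Z^{E_s} \to \Z^{E_i}$ along $E_i \subseteq E_s$; once this is extracted from \cref{diagram_bho}, the remainder is a one-line linear-algebra contradiction. I do not foresee any genuine obstacle: the hypothesis $l \geq 2$ coming from the failure of discipline at a self-intersecting node is precisely what is needed to produce two distinct components of the purity target on which the loop's coefficient is forced to agree.
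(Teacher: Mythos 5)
Your proof is correct, and it reaches the conclusion by a route that is genuinely different from, though clearly cousin to, the one in the paper. Both proofs identify the same culprit: a node $p$ that is a self-intersection of one component and whose thickness involves at least two of the parameters $t_1,\ldots,t_n$ gives rise to a loop $e(p)$ of $\Gamma_s$ that survives in at least two of the contractions $\Gamma_1,\ldots,\Gamma_l$. The paper then removes the loop, applies the Betti-number inequality (\ref{ineq_betti}) to the residual graphs $\Gamma',\Gamma'_1,\ldots,\Gamma'_l,\Gamma_{l+1},\ldots,\Gamma_n$, and obtains $h_1(\Gamma) < \sum h_1(\Gamma_i)$ by a count, i.e.\ it checks the failure of the numerical criterion iii)/iv) of \cref{def_ta_local}. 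You instead directly falsify criterion i)/ii): you exhibit the concrete element $\alpha = ([e(p)],0,\ldots,0)$ in the target of the purity map, and use the description of $\Z^{E_s}\to\Z^{E_i}$ as coordinate restriction (from \cref{diagram_bho}) to show that any preimage $\phi$ would have to satisfy $\phi(e(p))=1$ and $\phi(e(p))=0$ simultaneously. Your argument is a touch more self-contained, as it does not need to re-apply \cref{inequality} to the partially normalized family (nor to argue that normalization at $p$ yields another admissible nodal curve); the paper's version is more compact given that \cref{inequality} is already in place. Either way the hypothesis $l\geq 2$ is what does the work, and your proof is sound.
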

\begin{proof}
We may assume that $S$ is strictly local, with closed point $s$, and with $D$ given by a system of regular parameters $t_1\ldots, t_n$. Let $p\in \mathcal C^{ns}_s$, with thickness $t_1^{m_1}\cdot\ldots\cdot t_l^{m_l}$ for some $1\leq l \leq n$ and $m_1,\ldots,m_l\geq 1$. We have to show that if $l\geq 2$ then $p$ lies on two components of $\mathcal C_s$ . 

Suppose by contradiction that $l\geq 2$ and that $p$ lies on only one component of $\mathcal C_s$. The dual graph $\Gamma$ over $s$ has a loop $L$ corresponding to $p$, with label $t_1^{m_1}\cdot\ldots\cdot t_l^{m_l}$. For $1\leq i \leq n$ call $\Gamma_i$ the dual graph of the fibre $\mathcal C_{\zeta_i}$ over the generic point of $D_i$. The loop $L$ is preserved in the dual graphs $\Gamma_i$ for $1\leq i\leq l$. Let $\Gamma'$ be the graph obtained by $\Gamma$ by removing the loop $L$, and define similarly $\Gamma'_i$, $1\leq i \leq l$. Now inequality \ref{ineq_betti} of \cref{inequality} says
$$h_1(\Gamma',\Z)\leq \sum_{i=1}^lh_1(\Gamma'_i,\Z)+\sum_{j=l+1}^nh_1(\Gamma_j,\Z).$$

For every $1\leq i \leq l$, $h_1(\Gamma_i,\Z)=h_1(\Gamma'_i,\Z)+1$. Since $l\geq 2$, we find that 
$h_1(\Gamma,\Z)=h_1(\Gamma',\Z)+1<\sum_{i=1}^nh_1(\Gamma_i,\Z)$. This contradicts the fact that $\mathcal C/S$ is toric-additive.
\end{proof}

\subsection{Toric additivity and existence of N\'eron models}
We consider again a regular base scheme $S$ with a normal crossing divisor $D\subset S$, and a nodal curve $\mathcal C/S$ such that the base change $\mathcal C_U/U:=S\setminus D$ is smooth.
\Cref{holmes} tells us that if $\Pic^0_{\mathcal C_U/U}$ admits a N\'eron model over $S$, then $\mathcal C/S$ is aligned. We show that being disciplined is also a necessary condition for existence of a N\'eron model. 

\begin{lemma}\label{NM->disc}
Assume that $S$ is an excellent scheme. Suppose that $\mathcal C/S$ is such that $\Pic^0_{\mathcal C_U/U}$ admits a N\'eron model $\mathcal N$ over $S$. Then $\mathcal C/S$ is disciplined.
\end{lemma}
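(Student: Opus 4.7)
The plan is to argue by contradiction. Suppose $\mathcal C/S$ admits a N\'eron model for the jacobian but is not disciplined at some geometric point $\bar s$ of $S$. The strategy is to construct a blow-up $\pi\colon \mathcal C' \to \mathcal C$ such that $\mathcal C'/S$ is still a nodal curve smooth over $U$ with $\mathcal C'_U = \mathcal C_U$, yet $\mathcal C'/S$ is not aligned at $\bar s$. Since $\Pic^0_{\mathcal C'_U/U} = \Pic^0_{\mathcal C_U/U}$ admits a N\'eron model over $S$ by hypothesis, this will contradict \cref{holmes}(i).

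The failure of discipline provides a node $p \in \mathcal C^{ns}_{\bar s}$ on a single irreducible component of $\mathcal C_{\bar s}$, with thickness $(\tau) = (t_1^{m_1}\cdots t_l^{m_l})$, $l\geq 2$, $m_i\geq 1$, where $t_1,\ldots,t_n$ are regular parameters of $\O^{sh}_{S,\bar s}$ cutting out the components of $D$ through $\bar s$. By \cref{thickness_lemma}, $\widehat{\O}^{sh}_{\mathcal C,p} \cong \widehat{\O}^{sh}_{S,\bar s}[[x,y]]/(xy-\tau)$. Excellence of $S$ makes $\O^{sh}_{S,\bar s}$ excellent, so Artin approximation yields an \'etale neighbourhood $W\to\mathcal C$ of $p$ on which $x,y$ are realised as global functions with $(xy-\tau)$ cutting out $W$ inside an affine space over $\O^{sh}_{S,\bar s}$. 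Blow up $\mathcal C$ along the ideal $J = (x,t_1) \subset \O_W$ (extended trivially outside $W$) to obtain $\mathcal C'$; the blow-up centre lies in $V(t_1) \subset \mathcal C\setminus \mathcal C_U$, so $\mathcal C'_U = \mathcal C_U$. Explicit chart computations show $\mathcal C'/S$ is nodal: in Chart 1 ($x=t_1\alpha$) the strict transform is $\alpha y = t_1^{m_1-1}t_2^{m_2}\cdots t_l^{m_l}$, giving a node $q_1$ of this thickness; in Chart 2 ($t_1=x\beta$) the total space becomes smooth but the special fibre develops a node $q_2$ whose local equation $\widehat{\O}^{sh}_{\mathcal C',q_2}\cong \widehat{\O}^{sh}_{S,\bar s}[[X,Y]]/(XY-t_1)$ shows it has thickness $(t_1)$. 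The exceptional divisor is a new rational component $E$, which together with the partial normalization of the original component through $p$ forms a bigon in the dual graph with edges $q_1,q_2$.

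Since $l \geq 2$, the labels $(t_1^{m_1-1}t_2^{m_2}\cdots t_l^{m_l})$ and $(t_1)$ of the bigon's edges are not powers of each other (the first involves $t_2$, the second only $t_1$). Thus the bigon is an unaligned cycle in $\mathcal C'_{\bar s}$, and \cref{holmes}(i) applied to $\Pic^0_{\mathcal C'_U/U}$ gives the desired contradiction. The main technical obstacle is the globalization: the coordinate $x$ and the ideal $J$ are only natural in the completion at $p$, and excellence of $S$ combined with Artin approximation are used precisely to realise $x$ on an \'etale neighbourhood of $p$ and thus define the blow-up as a morphism of schemes rather than as a merely formal-local construction.
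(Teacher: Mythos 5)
Your approach is a genuinely different one from the paper's. The paper constructs the rank-one torsion-free but non-invertible sheaf $\mathcal F=\mathcal I(\sigma_p)\otimes\mathcal O(\sigma_q)$, pushes the corresponding $U$-point through the N\'eron model, uses the injectivity of the purity map for component groups (\cref{injective_comps}) to show it lands in $\mathcal N^0=\Pic^0_{\mathcal C/S}$, and then gets a contradiction by comparing $\mathcal F$ with the resulting line bundle $\mathcal L$. You instead want to blow up the bad node so that the resulting nodal model has an explicitly unaligned bigon and then invoke \cref{holmes}(i). The local chart computation, the identification of the two new thicknesses $(t_1^{m_1-1}t_2^{m_2}\cdots t_l^{m_l})$ and $(t_1)$, and the check that these fail the alignment condition are all correct.

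The gap is in the sentence ``Blow up $\mathcal C$ along the ideal $J=(x,t_1)\subset\mathcal O_W$ (extended trivially outside $W$).'' Artin approximation does produce an \'etale neighbourhood $W\to\mathcal C$ of $p$ on which $x,y$ are functions, but it does \emph{not} give a coherent ideal sheaf on $\mathcal C$ itself: $(x,t_1)$ lives on $W$, and there is no reason for it to satisfy descent along $W\to\mathcal C$. In fact the failure is exactly concentrated at the kind of point you are studying. Since $p$ lies on a \emph{single} irreducible component of $\mathcal C_{\bar s}$, the two branches $V(x,t_1)$ and $V(y,t_1)$ of $\mathcal C\times_S D_1$ at $p$ may lie on the same irreducible component $E$ of $\mathcal C\times_S D_1$, with the branches separating only after strict henselization of $\mathcal O_{\mathcal C,p}$. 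When that happens, the ideal of $E$ in $\mathcal O_{\mathcal C,p}$ is $(t_1)\cap$(unit)$\,=\,(x,t_1)\cap(y,t_1)=(t_1)$, so $E$ is Cartier at $p$ and blowing it up does nothing; and $(x,t_1)$ itself is not the stalk of any ideal sheaf on $\mathcal C$. (Compare the proof of \cref{disc->reg}, where the paper does blow up components of $\mathcal C_D$ and must explicitly invoke the disciplined hypothesis, via \cref{disc_graph}, to rule out exactly the Cartier case; here that hypothesis is what you are trying to prove, so you cannot use it.) So as written, $\mathcal C'\to\mathcal C$ is not defined as a morphism of $S$-schemes.

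A plausible repair is to blow up the closure $Z$ of a section $\sigma_p\colon S\to\mathcal C$ through $p$, constructed via Artin approximation exactly as in the paper (e.g.\ $x=t_1^{m_1}$, $y=t_2^{m_2}\cdots t_l^{m_l}$). This $Z$ \emph{is} a closed subscheme, and a local computation gives a bigon with thicknesses $(t_1^{m_1})$ and $(t_2^{m_2}\cdots t_l^{m_l})$, still unaligned. But one must then verify that $\mathrm{Bl}_Z\mathcal C\to S$ is a nodal curve globally, not just near $p$: the section $\sigma_p$ also passes through nodes of other fibres (over $D_1\cap(D_2\cup\cdots\cup D_l)$), and the blow-up must be checked there as well. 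None of this is in the proposal, so the argument has a genuine hole in the globalization and nodality steps, even though the guiding idea and the local computations are sound.
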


\begin{proof}
We may assume that $S$ is strictly henselian, with closed point $s$ and residue field $k=k(s)$. 
Assume by contradiction that $\mathcal C/S$ is not disciplined. Then there is some $p\in \mathcal C^{ns}_s$ that belongs to only one component $X$ of $\mathcal C_s$, and such that its thickness is $t_1^{m_1}\cdot\ldots\cdot t_l^{m_l}$ with $m_i\geq 1$ and $2
\leq l \leq n$. Let $q\in \mathcal C_s(k)$ be a smooth $k$-rational point belonging to the same component as $p$. By Hensel's lemma, there exists a section $\sigma_q\colon S \ra \mathcal C$ through $q$. We claim that the same is true for $p$: let $\widehat S$ be the spectrum of the completion of $\O(S)$ at its maximal ideal and consider the morphism $$W:=\Spec \widehat{\O}^{sh}_{\mathcal C,p}\cong \Spec \frac{\O(\widehat{S})[[x,y]]}{xy-t_1^{m_1}\cdot\ldots\cdot t_l^{m_l}} \ra \widehat S.$$

This has a section given by $x=t_1^{m_1}$, $y=t_2^{m_2}\cdot\ldots\cdot t_l^{m_l}$. Composing the section with the canonical morphism $W\ra \mathcal C$, gives a morphism $\widehat \sigma_p\colon \widehat S\ra \mathcal C$ going through $p$. Because $S$ is excellent and henselian, it has the Artin approximation property, and there exists a section $\sigma_p\colon S\ra \mathcal C$ which agrees with $\widehat \sigma_p$ when restricted to the closed point $s$, hence going through $p$.

 We write $\mathcal F:=\mathcal I(\sigma_p)\otimes_{\mathcal O_{\mathcal C}}\mathcal O(\sigma_q)$ for the coherent sheaf on $\mathcal C$ given by the tensor product of the ideal sheaf of $\sigma_p$ with the invertible sheaf associated to the divisor $\sigma_q$. It is what is called a \textit{torsion free, rank $1$} sheaf in the literature: it is $S$-flat, its fibres are 
of rank $1$ at the generic points of fibres of $\mathcal C$, and have no embedded points. Notice that $\mathcal F$ is not an invertible sheaf, as $\dim_{k(p)}\mathcal F\otimes k(p)=2.$

Let $u_p$ and $u_q$ be the restrictions of $\sigma_p$ and $\sigma_q$ to $U$. They are $U$-points of the smooth curve $\mathcal C_U/U$; the restriction of $\mathcal F$ to $U$ is the invertible sheaf $\mathcal F_U=\O_{\mathcal C_U}(u_q-u_p)$. This is the datum of a $U$-point $\alpha$ of $\Pic^0_{\mathcal C_U/U}$: indeed, $\Pic(U)=0$ because $\O(U)$ is a UFD, and $C_U/U$ has a section, so $\Pic^0_{\mathcal C_U/U}(U)=\Pic^0(\mathcal C_U)$.

By the definition of N\'eron model, there is a unique section $\beta\colon S\ra \mathcal N$ with $\beta_U=\alpha$. We write $J$ for $\Pic^0_{\mathcal C/S}$. As $J$ is semi-abelian, the canonical open immersion $J\ra \mathcal N$ identifies $J$ with the fibrewise-connected component of identity $\mathcal N^0$ (\cref{A=N0}). Write $\zeta_i, i=1\ldots, n$ for the generic points of the divisors $D_i$. Then $S_i:=\Spec \O_{S,\zeta_i}$ is a trait, and the restriction $\mathcal N_{S_i}$ is a N\'eron model of its generic fibre. Therefore $\alpha_K$ extends uniquely to a section $\alpha_i\colon S_i\ra \mathcal N_{S_i}$. As $\mathcal F_{S_i}$ is an invertible sheaf of degree $0$ on every irreducible component of $\mathcal C_{\overline \zeta_i}$, $\mathcal F_{S_i}$ is a $S_i$-point of $J_{S_i}$, and $\alpha_i$ is given by $\mathcal F_{S_i}$. Therefore, the restriction of $\alpha\colon S\ra \mathcal N$ to $S_i$ factors through $J=\mathcal N^0$ for every $i=1\ldots,n$.

 We denote now by $\Phi/S$ the \'etale group space of connected components of $\mathcal N$. By \cref{injective_comps}, the canonical morphism
$$\Phi(s)\ra \bigoplus_{i=1}^n\Phi(\zeta_i)$$
is injective. 
This implies that $\alpha$ lands inside $J=\mathcal N^0$, or in other words that $\mathcal F_U$ extends to an invertible sheaf $\mathcal L$ on $\mathcal C$ such that $\mathcal L_s$ is of degree $0$ on every component.

Now, let $Z\ra S$ be a closed immersion, with $Z$ a trait, such that the generic point $\xi$ of $Z$ lands into $U$ (it is an easy check that such a closed immersion exists). As $\mathcal F_{\xi}$ and $\mathcal L_{\xi}$ define the same point of $\Pic^0_{\mathcal C_{\xi}/\xi}$, there are isomorphisms $\mu_{\xi}\colon \mathcal F_{\xi}\ra \mathcal L_{\xi}$ and $\lambda_{\xi}\colon \mathcal L_{\xi}\ra \mathcal F_{\xi}$. By the same argument as in \citep[7.8]{alt_kl}, $\mu_{\xi}$ and $\lambda_{\xi}$ extend to morphisms $\mu\colon \mathcal F_Z\ra \mathcal L_Z$ and $\lambda\colon \mathcal L_Z\ra \mathcal F_Z$, which are non-zero on all fibres. Let's look at the restrictions to the closed fibre, $\mu_s\colon \mathcal F_s\ra \mathcal L_s$, $\lambda_s\colon\mathcal L_s\ra \mathcal F_s$. We know that $\mathcal F_s$ is trivial away from the component $X\subset \mathcal C_s$. So, if we write $Y$ for the closure in $\mathcal C_s$ of the complement of $X$, we may restrict $\mu_s$ and $\lambda_s$ to $Y$ to get global sections $l$ and $l'$ of $\mathcal L_Y$ and $\mathcal L^{\vee}_Y$ respectively. Now, if $l=0$, then the restriction $\mu_X$ of $\mu_s$ to $X$ is non-zero, because $\mu_s$ is non-zero. If $l\neq 0$, as $\mathcal L_s$ is of degree zero on every component, we have $l(y)\not\in\m_y\mathcal L_y$ for every $y\in Y$, and in particular for $y\in Y\cap X$. It follows that also in this case $\mu_X\neq 0$. We can apply the same argument to $l'$ and conclude that $\lambda_X\neq 0$. Then the compositions $\mu_X\circ\lambda_X\colon \mathcal L_X\ra \mathcal L_X$ and $\lambda_X\circ \mu_X\colon \mathcal F_X\ra \mathcal F_X$ are non-zero. As $\End_{\O_X}(\mathcal F_X)=k=\End_{\O_X}(\mathcal L_X)$, they are actually isomorphisms. It follows that $\mu_X\colon \mathcal F_X\ra \mathcal L_X$ is an isomorphism. However, $\dim_{k(p)}\mathcal F_{k(p)}=2$, while $\mathcal L_X$ is an invertible sheaf. This gives us the required contradiction.

\end{proof}

\begin{example}
Consider again the curve $\mathcal E/S$ of \cref{ex_not_disc}. The fibres of $\mathcal E/S$ are geometrically irreducible, hence the family is aligned. As the total space $E$ is not regular, \cref{holmes} does not allow us to deduce the existence of a N\'eron model for $\mathcal E_U=\Pic^0_{\mathcal E_U/U}$ over $S$.

However we have seen that $\mathcal E/S$ is not disciplined; hence by \cref{NM->disc} we know for certain that there exists no N\'eron model over $S$ for $\mathcal E_U$.
\end{example}

Combining the previous lemmas of this section, we obtain the following theorem, which shows that toric additivity is a criterion for existence of N\'eron models of jacobians:

\begin{theorem}\label{main_thm_curves}
Let $S$ be a regular scheme, $D$ a normal crossing divisor on $S$, $\mathcal C\ra S$ a nodal curve smooth over $U=S\setminus D$. 
\begin{itemize}
\item[i)] If $\mathcal C/S$ is toric-additive, then $\Pic^0_{\mathcal C_U/U}$ admits a N\'eron model over $S$. 
\item[ii)] If moreover $S$ is excellent, the converse is also true. 
\end{itemize}
\end{theorem}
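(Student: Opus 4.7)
The plan is to reduce both parts of the theorem to Holmes's \cref{holmes} via a desingularization procedure.

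For part i), assume $\mathcal C/S$ is toric-additive. The strategy is as follows. First, by \cref{TA->disc}, $\mathcal C/S$ is disciplined. Then \cref{disc->reg} furnishes an \'etale surjective morphism $g\colon S'\ra S$ together with a blow-up $\phi\colon \mathcal C'\ra \mathcal C\times_SS'$ such that $\mathcal C'/S'$ is still a nodal curve smooth over $g^{-1}(U)$ and, crucially, $\mathcal C'$ is regular. I would then argue that $\mathcal C'/S'$ is toric-additive: indeed, $\mathcal C\times_SS'/S'$ is toric-additive because an \'etale base change trivially satisfies the hypotheses of \cref{lemma_bc} (and toric additivity is smooth-local by \cref{TA_smooth}), and toric additivity is preserved under the proper birational modification $\phi$ by \cref{TA_blowups}. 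Since $\mathcal C'$ is regular, \cref{regular-aligned-TA} gives that $\mathcal C'/S'$ is aligned, so by \cref{holmes}~ii), $\Pic^0_{\mathcal C'_{g^{-1}(U)}/g^{-1}(U)}$ admits a N\'eron model $\mathcal N'/S'$. Using that the semi-abelian model $\Pic^0_{\mathcal C/S}$ is unchanged by the blow-up (as remarked in the proof of \cref{TA_blowups}), and that $g$ is an open map identifying $g^{-1}(U)$-jacobians, $\mathcal N'$ is in fact a N\'eron model of $\Pic^0_{\mathcal C_U/U}\times_Ug^{-1}(U)$ over $S'$. Finally \cref{desc_smooth} descends $\mathcal N'$ to a N\'eron model $\mathcal N/S$ of $\Pic^0_{\mathcal C_U/U}$.

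For part ii), assume $S$ is excellent and that $\Pic^0_{\mathcal C_U/U}$ admits a N\'eron model $\mathcal N/S$. By \cref{NM->disc}, $\mathcal C/S$ is disciplined, so \cref{disc->reg} again produces an \'etale $g\colon S'\ra S$ and a blow-up $\phi\colon \mathcal C'\ra \mathcal C\times_SS'$ with $\mathcal C'$ regular. By \cref{smooth_base_change} the base change $\mathcal N\times_SS'$ is a N\'eron model of the jacobian of $\mathcal C_U/U$ base-changed along $g$, which coincides with $\Pic^0_{\mathcal C'_{g^{-1}(U)}/g^{-1}(U)}$. Hence by \cref{holmes}~i) the aligned property holds for $\mathcal C'/S'$, and by \cref{regular-aligned-TA} this is equivalent to toric additivity of $\mathcal C'/S'$. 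Using \cref{TA_blowups} again we transfer toric additivity from $\mathcal C'/S'$ to $\mathcal C\times_SS'/S'$, and then \cref{lemma_bc} (applied to the \'etale surjection $g$, which clearly satisfies $(\star)$ after passing to strict henselizations) lets us descend toric additivity from $S'$ to $S$.

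The main obstacle I anticipate is the careful bookkeeping in the last two steps of part i), namely verifying that the N\'eron model produced by Holmes over $S'$ is indeed a N\'eron model of the object we started from (so that \cref{desc_smooth} applies), rather than of some slightly different jacobian; this requires knowing that the blow-up $\phi$ does not change $\Pic^0$ and that N\'eron models are uniquely determined by their restriction to the smooth locus. Once this identification is in place, all the remaining inputs are direct applications of the lemmas established in the paper, so the proof is essentially the concatenation \cref{TA->disc} $\Rightarrow$ \cref{disc->reg} $\Rightarrow$ \cref{regular-aligned-TA} $\Rightarrow$ \cref{holmes} $\Rightarrow$ \cref{desc_smooth} in one direction, and the reverse chain \cref{NM->disc} $\Rightarrow$ \cref{disc->reg} $\Rightarrow$ \cref{holmes} $\Rightarrow$ \cref{regular-aligned-TA} $\Rightarrow$ \cref{lemma_bc} in the other.
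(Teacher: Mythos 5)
Your proposal is correct and follows essentially the same path as the paper's own proof: in both directions one reduces to a regular total space via \cref{TA->disc}/\cref{NM->disc} and \cref{disc->reg}, invokes the equivalence of alignment and toric additivity from \cref{regular-aligned-TA} together with Holmes's \cref{holmes}, and moves toric additivity or the N\'eron model back and forth across the blow-up and the \'etale cover using \cref{TA_blowups}, \cref{TA_smooth}/\cref{lemma_bc}, \cref{smooth_base_change}, and \cref{desc_smooth}. The ``bookkeeping obstacle'' you flag is handled exactly as you anticipate, using that the blow-up induces an isomorphism on $\Pic^0$ and the uniqueness of N\'eron models.
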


\begin{proof}
Whether we are in the hypotheses of i) and ii), we know by \cref{TA->disc,NM->disc} above that $\mathcal C/S$ is disciplined; hence by \cref{disc->reg} there exists an \'etale cover $g\colon S'\ra S$ and a blow-up $\pi\colon \mathcal C'\ra \mathcal C_{S'}$ which restricts to an isomorphism over $U'=U\times_SS'$, such that $\mathcal C'$ is a regular nodal curve.

Assume that $\Pic^0_{\mathcal C/S}$ is toric-additive. To show the existence of a N\'eron model over $S$, it is enough to show it over $S'$, by \cref{desc_smooth}. The base change $\mathcal C_{S'}/S'$ is toric-additive by \cref{TA_smooth}. The blow-up $\mathcal C'/S'$ is also toric-additive by \cref{TA_blowups}. We can now apply \cref{regular-aligned-TA} and deduce that $\mathcal C'/S'$ is aligned. Hence by \cref{holmes}, we find that $\Pic^0_{C_{U'}/U'}$ admits a N\'eron model over $S'$, proving i).

Now assume that $S$ is excellent and that $\Pic^0_{\mathcal C_U/U}$ admits a N\'eron model $\mathcal N$ over $S$. Then $\mathcal N'=\mathcal N\times_SS'$ is a N\'eron model for $\Pic^0_{\mathcal {C'}_{U'}/U'}$ over $S'$, by \cref{smooth_base_change}. Hence $\mathcal C'/S'$ is aligned by \cref{holmes}. As $\mathcal C'$ is regular, we deduce by \cref{regular-aligned-TA} that $\mathcal C'/S'$ is toric-additive. By \cref{TA_blowups}, so is $\mathcal C_{S'}/S'$. As toric additivity descends along \'etale covers (\cref{TA_smooth}), $\C/S$ is toric-additive.
\end{proof}

\begin{corollary}
Let $S$ be an excellent, regular scheme, $D$ a codimension one regular subscheme of $S$. Let $\mathcal C/S$ be a nodal curve, smooth over $U=S\setminus D$. Then $\Pic^0_{\mathcal C_U/U}$ admits a N\'eron model over $S$.
\end{corollary}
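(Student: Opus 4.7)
The plan is to reduce the claim directly to \cref{main_thm_curves}, part i), by verifying that $\mathcal C/S$ is automatically toric-additive under the stronger hypothesis that $D$ is a regular codimension one subscheme (rather than merely a normal crossing divisor).

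First I would observe that a regular closed subscheme $D\subset S$ of codimension one in a regular scheme is in particular a normal crossing divisor: at any geometric point $s\in D$, the strict henselization $\O_{S,s}^{sh}$ is a regular local ring, and by the hypothesis that $D$ is regular of codimension one the ideal of $D\cap \Spec\O_{S,s}^{sh}$ is generated by a single regular parameter $t_1$. Hence the hypotheses of \cref{main_thm_curves} are satisfied, and it suffices to show that $\mathcal C/S$ is toric-additive.

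Next, I would check toric-additivity at an arbitrary geometric point $s$ of $S$, working in the strict henselization $S'=\Spec\O_{S,s}^{sh}$. By the previous paragraph, $D\cap S'$ has either zero or one irreducible components, i.e.\ $n=0$ (if $s\notin D$) or $n=1$ (if $s\in D$). In both cases \cref{remark_ta} tells us that $\mathcal C/S$ is automatically toric-additive at $s$: for $n=0$ the fibre $\mathcal C_s$ is smooth so the character group $X$ is trivial, and for $n=1$ the generization map $X\to X_1$ is both injective (\cref{injective_purity}) and surjective by upper semi-continuity of toric rank.

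Since $s$ was arbitrary, $\mathcal C/S$ is toric-additive. Applying \cref{main_thm_curves}, part i), we conclude that $\Pic^0_{\mathcal C_U/U}$ admits a N\'eron model over $S$. The hypothesis that $S$ is excellent is not actually used here, since the existence half of \cref{main_thm_curves} does not require it; it could be dropped from the statement of the corollary. There is no real obstacle to this argument; the whole content is the observation that a regular codimension one divisor locally has a single branch, reducing to the trivial case of \cref{remark_ta}.
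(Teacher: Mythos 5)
Your proof is correct and follows essentially the same route as the paper's: observe that at each strict henselization the regular codimension-one divisor $D$ has a single irreducible component, invoke \cref{remark_ta} to get toric additivity for free, and conclude by \cref{main_thm_curves}, part i). The additional remark that the excellence hypothesis is not used is accurate, since only the forward implication of \cref{main_thm_curves} is invoked.
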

\begin{proof}
At the strict henselization of each geometric point $s$ of $S$, $D$ is irreducible. Hence by \cref{remark_ta}, $\mathcal C/S$ is toric-additive at $s$.
\end{proof}

\begin{corollary}
Let $S$ be an excellent, regular scheme, $D$ a normal crossing divisor on $S$, $\mathcal C\ra S$ a nodal curve smooth over $U=S\setminus D$. There exists a biggest open $V\subset S$ over which $\Pic^0_{\mathcal C_U/U}$ admits a N\'eron model. 
\end{corollary}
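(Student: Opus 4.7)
The plan is to take $V$ to be the open locus where $\mathcal C/S$ is toric-additive and show that it has the required properties.

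First I would define $V \subseteq S$ as the set of points $s$ such that $\mathcal C/S$ is toric-additive at $s$. By \cref{ta_open}, toric additivity is an open condition, so $V$ is indeed open in $S$. Restricting the given data to $V$, the scheme $V$ is excellent and regular, $D \cap V$ is a normal crossing divisor in $V$, and $\mathcal C_V \to V$ is a nodal curve smooth over $(U \cap V)$. By construction $\mathcal C_V/V$ is toric-additive at every geometric point, hence toric-additive, so by part i) of \cref{main_thm_curves} the jacobian $\Pic^0_{\mathcal C_{U\cap V}/(U\cap V)}$ admits a N\'eron model over $V$.

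For maximality, suppose $W \subseteq S$ is any open such that $\Pic^0_{\mathcal C_{U\cap W}/(U\cap W)}$ admits a N\'eron model over $W$. Then $W$ is itself excellent and regular, $D \cap W$ is a normal crossing divisor, and $\mathcal C_W/W$ is a nodal curve smooth over $U\cap W$. By part ii) of \cref{main_thm_curves}, $\mathcal C_W/W$ is toric-additive. Since toric additivity at a geometric point $s$ is defined via the strict henselization at $s$, and this strict henselization does not depend on whether we view $s$ as a point of $S$ or of the open $W$, we conclude that $\mathcal C/S$ is toric-additive at every geometric point of $W$. Hence $W \subseteq V$, proving that $V$ is the largest such open.

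The only real content here is the openness of the toric additivity locus (already handled in \cref{ta_open}) combined with the equivalence between toric additivity and existence of a N\'eron model given by \cref{main_thm_curves}; there is no further obstacle to overcome, as the argument is essentially a formal consequence of these two results.
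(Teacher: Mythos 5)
Your proposal is correct and takes essentially the same approach as the paper: the paper's proof simply cites \cref{ta_open} (openness of the toric-additivity locus), and your argument fills in the details by defining $V$ as that open locus and invoking both directions of \cref{main_thm_curves} to establish that $V$ is exactly the set over which a N\'eron model exists.
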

\begin{proof}
By \cref{ta_open}, toric additivity is an open condition on $S$.
\end{proof}


\begin{thebibliography}{{Sta}16}

\bibitem[AK80]{alt_kl}
Allen~B. Altman and Steven~L. Kleiman.
\newblock Compactifying the {P}icard scheme.
\newblock {\em Adv. in Math.}, 35(1):50--112, 1980.

\bibitem[BLR90]{BLR}
Siegfried Bosch, Werner L{\"u}tkebohmert, and Michel Raynaud.
\newblock {\em N\'eron Models}, volume~21 of {\em Ergebnisse der {M}athematik
  und ihrer {G}renzgebiete}.
\newblock Springer-Verlag, 1990.

\bibitem[Del85]{deligne}
Pierre Deligne.
\newblock Le lemme de {G}abber.
\newblock {\em Ast\'erisque}, 127:131--150, 1985.
\newblock Seminar on arithmetic bundles: the Mordell conjecture (Paris,
  1983/84).

\bibitem[dJ96]{alterations}
A.~J. de~Jong.
\newblock Smoothness, semi-stability and alterations.
\newblock {\em Inst. Hautes \'Etudes Sci. Publ. Math.}, (83):51--93, 1996.

\bibitem[FC90]{faltings1990degeneration}
G.~Faltings and C.L. Chai.
\newblock {\em Degeneration of Abelian Varieties}.
\newblock A Series of modem surveys in mathematics. Springer-Verlag, 1990.

\bibitem[GD67]{EGA4}
Alexander Grothendieck and Jean Dieudonn{\'e}.
\newblock {\em {\'E}l{\'e}ments de g{\'e}om{\'e}trie alg{\'e}brique {IV}},
  volume 20, 24, 28, 32 of {\em Publications {M}ath{\'e}matiques}.
\newblock Institute des {H}autes {\'E}tudes {S}cientifiques., 1964-1967.

\bibitem[GRR72]{SGA7}
Alexander Grothendieck, Michel Raynaud, and Dock~Sang Rim.
\newblock {\em Groupes de monodromie en g\'eom\'etrie alg\'ebrique. {I}}.
\newblock Lecture Notes in Mathematics, Vol. 288. Springer-Verlag, 1972.
\newblock S{\'e}minaire de G{\'e}om{\'e}trie Alg{\'e}brique du Bois-Marie
  1967--1969 (SGA 7 I).

\bibitem[Hol17]{holmes}
David Holmes.
\newblock N{\'e}ron models of jacobians over base schemes of dimension greater
  than 1.
\newblock {\em To appear in Journal f{\"u}r die reine und angewandte
  Mathematik}, 2017.

\bibitem[Liu02]{Liu}
Qing Liu.
\newblock {\em Algebraic geometry and arithmetic curves}, volume~6 of {\em
  Oxford Graduate Texts in Mathematics}.
\newblock Oxford University Press, Oxford, 2002.
\newblock Translated from the French by Reinie Ern{\'e}, Oxford Science
  Publications.

\bibitem[Ray70]{raynaud}
Michel Raynaud.
\newblock {\em Faisceaux amples sur les sch\'emas en groupes et les espaces
  homog\`enes}.
\newblock Lecture Notes in Mathematics, Vol. 119. Springer-Verlag, Berlin-New
  York, 1970.

\bibitem[{Sta}16]{stacks}
The {Stacks Project Authors}.
\newblock \itshape {S}tacks {P}roject.
\newblock \url{http://stacks.math.columbia.edu}, 2016.

\end{thebibliography}
\end{document}